\numberwithin{equation}{section}
\newcommand{\id}{\mathord{\operatorname{id}}}
\newcommand{\N}{\mathbb{N}}
\newcommand{\rE}{\operatorname{E}}
\newcommand{\rV}{\operatorname{V}}
\newcommand{\Aut}{\operatorname{Aut}}
\newcommand{\Gr}{\mathcal{G}}
\newcommand{\GR}{\mathcal{R}}
\newcommand{\autR}{\text{Aut}(\GR)}
\newcommand{\EG}{\rE(\Gr)}
\newcommand{\VG}{\rV(\Gr)}
\newcommand{\aut}{\mathrm{Aut}}
\newcommand{\Ac}{\mathcal A}
\newcommand{\Fc}{\mathcal F}
\newcommand{\Hc}{\mathcal H}
\newcommand{\Rc}{\mathcal R}
\newcommand{\Gc}{\mathcal G}
\newcommand{\F}{\mathbf{F}}
\theoremstyle{plain}
\newtheorem{theorem}{Theorem}[section]
\newtheorem*{theoA}{Theorem A}
\newtheorem*{theoB}{Theorem B}
\newtheorem*{corC}{Corollary C}
\newtheorem*{theoD}{Theorem D}
\newtheorem{lemma}[theorem]{Lemma}
\newtheorem{proposition}[theorem]{Proposition}
\newtheorem{corollary}[theorem]{Corollary}
\theoremstyle{definition}
\newtheorem{definition}[theorem]{Definition}
\newtheorem{remark}[theorem]{Remark}
\begin{document}

\author{Pierre Fima}
\address{Pierre Fima
\newline
Universit\'e de Paris, Sorbonne Universit\'e, CNRS, Institut de Mathématiques de 
Jussieu-Paris Rive Gauche, F-75013, Paris, France.}
\email{pierre.fima@imj-prg.fr}
 \thanks{This research has been partially supported by the ANR Blanc
ANR-14-CE25-0004, acronym GAMME}
\author{Soyoung Moon}
\address{Soyoung Moon
\newline
Universit\'e de Bourgogne, Institut Math\'ematiques de Bourgogne, UMR 5584 CNRS, 
BP 47870, 21078 Dijon cedex 
France}
\email{soyoung.moon@u-bourgogne.fr}
\author{Yves Stalder}
\address{Yves Stalder
\newline
Universit\'e Clermont Auvergne, CNRS, LMBP, F-63000 Clermont-Ferrand, France}
\email{yves.stalder@uca.fr}

\subjclass[2010]{20B22 (Primary); 20E06, 20E05, 05C63, 54E52 (Secondary)}
\keywords{Homogeneous actions, Random graph, free groups, groups acting on trees, Baire category Theorem}

\title{Homogeneous Actions on the Random Graph}
\begin{abstract}

\noindent We show that any free product of two (non-trivial) countable groups, one of them being infinite, admits a faithful and homogeneous action on the Random Graph. We also show that a large class of HNN extensions or free products, amalgamated over a finite group, admit such an action and we extend our results to groups acting on trees. Finally, we show the ubiquity of finitely generated free dense subgroups of the automorphism group of the Random Graph whose action on it have all orbits infinite.
\end{abstract}
\maketitle

\section*{Introduction}

\noindent The \textit{Random Graph} (or \textit{Rado graph} or \textit{Erd\H{o}s-R\'enyi graph}) is the unique, up to isomorphism, countable infinite graph $\mathcal{R}$ having the following  property : for  any pair of disjoint finite subsets $(U,V)$ of the set of vertices there exists a vertex adjacent to any vertex in $U$ and not adjacent to any vertex in $V$. Using this property and a model theoretic device called ``back-and-forth" one can show that $\GR$ is \textit{homogeneous}: any graph isomorphism between finite induced subgraphs can be extended to a graph automorphism of $\mathcal{R}$. Hence, the Random Graph plays the same role in graph theory as the Uryshon's space does in metric spaces.

\vspace{0.2cm}

\noindent The Random Graph has been popularized by Erd\H{o}s and R\'enyi in a serie of papers between 1959 and 1968. They showed \cite{ER63} that if a countable graph is chosen at random, by selecting edges independently with probability $\frac{1}{2}$ from the set of $2$-elements subsets of the vertex set, then almost surely the resulting graph is isomorphic to $\mathcal{R}$. Erd\H{o}s and R\'enyi conclude that this Theorem demolishes the theory of infinite random graphs (however, the world of finite random graphs is much less predictable).

\vspace{0.2cm}

\noindent Since almost all countable graphs are isomorphic to $\mathcal{R}$, Erd\H{o}s and Renyi did not give an explicit construction of the Random Graph. However, by using the uniqueness property of $\GR$, it is clear that one may give many different explicit constructions. Such an explicit description was proposed by Rado \cite{Ra64}. The uniqueness property of $\mathcal{R}$ may also be used to show many stability properties  (if small changes are made on $\mathcal{R}$ then the resulting graph is still isomorphic to $\mathcal{R}$) and to construct many automorphisms of $\mathcal{R}$ as well as group actions on $\mathcal{R}$.

\vspace{0.2cm}

\noindent The homogeneity of $\mathcal{R}$ means that its automorphism group $\autR$ is large: it acts transitively on vertices, edges and more generally on finite configurations of any given isomorphism type. We will view it as a closed subset of the Polish group (for the topology of pointwise convergence) of the bijections of the vertices. Hence, it has a natural Polish group topology.

\vspace{0.2cm}

\noindent The goal of this paper is to understand the countable dense subgroups of $\autR$. The first construction of such subgroups was given in \cite{Mac86}, where Macpherson showed that $\autR$ contains a dense free subgroup on 2 generators. More generally, he showed that if $M$ is a $\aleph_0$-categorical structure, then $\Aut(M)$ has a dense free subgroup of rank $\aleph_0$. 
Melles and Shelah \cite{MS94} proved that, if $M$ is a saturated model of a complete theory $T$ with $| M| =\lambda > |T| $, then $\Aut(M)$ has a dense free subgroup of cardinality $2^{\lambda}$. By using the extension property for graphs, Bhattacharjee and Macpherson showed in \cite{BM05} that $\autR$ has a dense locally finite subgroup.

\vspace{0.2cm}

\noindent We call a group action $\Gamma\curvearrowright\GR$ \textit{homogeneous} if, for any graph isomorphism $\varphi\,:\, U\rightarrow V$ between finite induced subgraphs $U,V$ of $\GR$, there exists $g\in\Gamma$ such that $g(u)=\varphi(u)$ for all $u\in U$. The homogeneity of $\mathcal{R}$ means exactly that $\autR\curvearrowright \GR$ is homogeneous. Moreover, it is easy to check that a subgroup $G<\autR$ is dense if and only if the action $G\curvearrowright\GR$ is homogeneous.

\vspace{0.2cm}

\noindent Hence, to understand the countable dense subgroups of $\autR$ one has to identify the class $\mathcal{H}_{\Rc}$ of all countable groups that admit a faithful and homogeneous action on $\GR$. Besides free groups (\cite{Mac86}, \cite{MS94}, \cite{GK03} and \cite{GS15}) and a locally finite subgroup (\cite{BM05}), little is known on groups in $\mathcal{H}_{\Rc}$. There are some obvious obstructions to be in the class $\mathcal{H}_\Rc$: it is easy to deduce from the simplicity of $\autR$, proved in \cite{Tr85}, that any $\Gamma\in\mathcal{H}_\Rc$ must be icc and not solvable (see Corollary \ref{Obstruction}). Our first positive result is the following. We shall use the notion of \textit{highly core-free subgroups}, which is a strengthening of core-freeness, introduced in \cite{FMS15} and recalled in Section $1.2$. As an example, let us note that a finite subgroup in an icc group is highly core-free.

\begin{theoA} If $\Gamma_1,\Gamma_2$ are non-trivial countable groups and $\Gamma_1$ is infinite then $\Gamma_1*\Gamma_2\in\mathcal{H}_{\Rc}$. If $\Sigma<\Gamma_1,\Gamma_2$ is a common finite subgroup such that $\Sigma$ is highly core-free in $\Gamma_1$ and, either $\Gamma_2$ is infinite and $\Sigma$ is highly core-free in $\Gamma_2$, or $\Gamma_2$ is finite and $[\Gamma_2:\Sigma]\geq 2$, then $\Gamma_1\underset{\Sigma}{*}\Gamma_2\in\mathcal{H}_\Rc$.
\end{theoA}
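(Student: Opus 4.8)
The plan is to reduce everything to constructing, for $\Gamma:=\Gamma_1\underset{\Sigma}{*}\Gamma_2$, a single faithful action $\Gamma\actson\GR$ with dense image in $\autR$: by the equivalence recalled in the introduction such an action is automatically homogeneous, whence $\Gamma\in\mathcal{H}_{\Rc}$. The plain free product is the case $\Sigma=\{e\}$, where the ``general position'' issues below evaporate and one needs only that $\Gamma_1$ be infinite; so it is enough to treat the amalgamated case. A preliminary observation fixes the shape of any solution: if $G<\autR$ is dense then, for $h\in\autR_x$ and any finite set $F$, density gives $g\in G$ agreeing with $h$ on $\{x\}\cup F$, so $g\in G_x$; thus $G_x$ is dense in the uncountable group $\autR_x$, hence infinite. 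Therefore the vertex set of $\GR$ must be realized as a $\Gamma$-set with infinite point stabilizers, which is exactly why a ``large'' factor is required. The candidate I would use is $\Gamma/\Gamma_1$, that is, the set of type-$1$ vertices of the Bass--Serre tree $T$ of $\Gamma$: its point stabilizers are the conjugates of $\Gamma_1$, hence infinite, and the kernel of the $\Gamma$-action is the $\Gamma$-core of $\Gamma_1$, which is trivial (using only $\Sigma\subsetneq\Gamma_2$ and that $\Sigma$ is core-free in $\Gamma_1$). So \emph{any} $\Gamma$-invariant graph structure on $\Gamma/\Gamma_1$ already gives a faithful action, and the whole problem is to choose the structure so that the graph is $\GR$ and the action is homogeneous.

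Such a structure amounts to choosing, for each $\Gamma$-orbit of unordered pairs of type-$1$ vertices --- equivalently, for each nontrivial double coset in $\Gamma_1\backslash\Gamma/\Gamma_1$, which encodes the $T$-geodesic between the two vertices together with its labelling by the normal form of $\Gamma_1\underset{\Sigma}{*}\Gamma_2$ --- whether that pair is an edge. I would fix these choices by a back-and-forth (``generic'') construction over an enumeration of the double cosets, meeting two families of requirements: \emph{(Extension)} for all disjoint finite $U,W\subseteq\Gamma/\Gamma_1$ there is a vertex joined to every element of $U$ and to none of $W$; and \emph{(Homogeneity)} every graph isomorphism between finite induced subgraphs extends to a group element. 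Organizing things along $T$ is what makes this work: the ``relation type'' of a pair depends only on the geodesic joining the two vertices, so by pushing a prospective witness far out in $T$ one arranges that its relation types to a prescribed finite set of vertices fall into pairwise distinct orbits whose edge status is still undetermined, hence may be prescribed. The infiniteness of $\Gamma_1$ (supplying infinitely many type-$1$ directions) and ``$\Sigma$ highly core-free in $\Gamma_1$'' (and the analogous hypothesis on $\Gamma_2$, or $[\Gamma_2:\Sigma]\ge 2$) are exactly what provide enough fresh directions and enough room to put finitely many $\Sigma$-cosets in general position. Granting the construction, identifying the graph with $\GR$ is immediate from (Extension) and the uniqueness of $\GR$; that the hypotheses force $\Gamma$ to be icc and non-solvable makes this coherent with Corollary~\ref{Obstruction}.

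Faithfulness then needs nothing further (alternatively one argues through $T$: a nontrivial $g$ either moves a vertex of $T$, hence permutes nontrivially the vertex set, or lies in a conjugate of some $\Gamma_i$ and already acts nontrivially on $\Gamma/\Gamma_1$). Securing requirement (Homogeneity) is the heart of the matter and the step I expect to be the main obstacle: given a graph isomorphism $\vphi\colon U\to V$ between finite induced subgraphs, one must produce $g\in\Gamma$ with $g|_U=\vphi$. I would do this by a back-and-forth adapted to the amalgam --- first enlarge $U$ (and $V$ correspondingly) so as to ``saturate'' the finitely many relevant directions in $T$, then build $g$ as a product of elements of vertex stabilizers (conjugates of $\Gamma_1$ and $\Gamma_2$) and translations along $T$, each factor realizing $\vphi$ on one further piece while not disturbing those already arranged. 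The elementary moves are: realize a partial isomorphism inside a single $\Gamma_1$-orbit using the richness of the $\Gamma_1$-action; redistribute the branches at a vertex of $T$ using the highly-core-free hypothesis; and translate along $T$. The delicate point --- and the reason the hypotheses take this precise form --- is that for $\Sigma\ne\{e\}$ these moves are not independent: the redistribution of branches at a vertex is constrained by the finite group $\Sigma$, so one must check that the local homogeneity data built into the generic construction, together with the transitivity and mixing of $\Gamma\actson T$, suffice to realize an \emph{arbitrary} finite partial isomorphism of $\GR$, not merely those aligned with $T$. The case $\Gamma_2$ finite with $[\Gamma_2:\Sigma]\ge 2$ is the lightest, the type-$2$ fibres being harmless; the heaviest is $\Gamma_1,\Gamma_2$ both infinite with $\Sigma$ nontrivial, where both sides contribute genuine homogeneity requirements that have to be matched across the edges of $T$.
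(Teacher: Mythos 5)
Your approach contains a structural obstruction that makes it unworkable, independently of the homogeneity step you yourself flag as the main difficulty. You propose to realize $V(\GR)$ as the coset space $\Gamma/\Gamma_1$ equipped with a $\Gamma$-invariant graph structure, i.e.\ with edges given by a union of non-trivial double cosets in $\Gamma_1\backslash\Gamma/\Gamma_1$. But if an action $\Gamma\actson\GR$ is homogeneous, then for any vertex $x$ and any two neighbours $y,y'$ of $x$ the map $x\mapsto x$, $y\mapsto y'$ is an isomorphism between finite induced subgraphs, so some $g\in\Gamma$ fixes $x$ and sends $y$ to $y'$; the same holds for non-neighbours. Hence the stabilizer $\Gamma_x$ must have \emph{exactly two} orbits on $V(\GR)\setminus\{x\}$, namely the neighbours and the non-neighbours of $x$. (This also follows from your own correct observation that $\Gamma_x$ is dense in $\autR_x$, which has precisely these two orbits.) In your model $\Gamma_x$ is a conjugate of $\Gamma_1$, and its orbits on the remaining vertices are the non-trivial double cosets $\Gamma_1 g\Gamma_1$, of which there are infinitely many: they are separated, for instance, by the Bass--Serre tree distance $d(v_0,gv_0)$, which is unbounded in every case covered by Theorem A (since $\Sigma$ is proper in both factors). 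Consequently no $\Gamma$-invariant graph structure on $\Gamma/\Gamma_1$ can be both isomorphic to $\GR$ and homogeneous under $\Gamma$: as soon as there is at least one edge double coset and one non-edge double coset the stabilizer fails transitivity on neighbours or on non-neighbours, and otherwise the graph is complete or empty. Your generic choice over double cosets can plausibly produce a faithful action on a copy of $\GR$, but never a homogeneous one; the necessary condition you derived (infinite stabilizers) is very far from sufficient.

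The paper avoids this trap precisely by not prescribing the orbit structure. It first constructs one faithful, highly non-transitive action $\Gamma\actson\GR$ with auxiliary properties (non-singularity, property $(F)$, $\Sigma$ acting freely and being highly core-free with respect to each factor), obtained as an inductive limit of (parametrized) random extensions of the left translation action of $\Gamma$ on itself viewed as an edgeless graph (Corollary \ref{CorRelActR}). It then deforms the embedding by keeping $\Gamma_1$ fixed and replacing $\Gamma_2$ by $\alpha^{-1}\Gamma_2\alpha$ for $\alpha$ ranging over the Polish group $Z$ of automorphisms commuting with $\Sigma$, and proves via a Baire category argument --- whose engine is the $\Sigma$-equivariant back-and-forth extension result, Proposition \ref{PropExtension} --- that $\pi_\alpha$ is faithful and homogeneous for a dense $G_\delta$ of parameters $\alpha$ (Theorems \ref{ThmMain} and \ref{ThmOneFiniteFactor}). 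The vertex stabilizers of the resulting homogeneous action are complicated subgroups that are not conjugates of either factor; any salvage of your tree-based picture would have to allow for this.
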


\noindent To prove Theorem $A$, we first show that any infinite countable group $\Gamma$ admits a ``nice" action on $\GR$ (Corollary \ref{CorActR}). To produce this explicit action on $\GR$ we use an inductive limit process. Then starting from an action of $\Gamma:=\Gamma_1*\Gamma_2\curvearrowright\GR$ and an automorphism $\alpha\in\autR$, we construct a natural action $\pi_{\alpha}\,:\, \Gamma\curvearrowright\GR$ and we show that the set $\{\alpha\in\autR\,:\,\pi_{\alpha}\text{ is faithful and homogeneous}\}$ is a dense $G_\delta$ in $\autR$ whenever the initial action is ``nice enough" (Theorem \ref{ThmMain}). We follow the same strategy for amalgamated free products but we need to be more careful since we also have to realize the amalgamated free product relations.

\vspace{0.2cm}

\noindent Using the same strategy, we prove an analogous result for HNN-extensions.

\begin{theoB}
Let $H$ be an infinite countable group, $\Sigma<H$ a finite subgroup and $\theta\,:\,\Sigma\rightarrow H$ an injective group homomorphism. If both $\Sigma$ and $\theta(\Sigma)$ are highly core-free in $H$ then $\text{HNN}(H,\Sigma,\theta)\in\mathcal{H}_\Rc$.
\end{theoB}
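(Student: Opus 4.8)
The plan is to run the HNN analogue of the strategy used for Theorem~A. Write $G=\HNN(H,\Sigma,\theta)=\langle H,t\mid t\sigma t^{-1}=\theta(\sigma),\ \sigma\in\Sigma\rangle$. Since $H$ is infinite, $G$ is an infinite countable group, so Corollary \ref{CorActR} provides a ``nice'' action $\pi\,:\,G\actson\GR$, which is in particular faithful; I will actually want a refinement of it guaranteeing moreover that the finite subgroups $\Sigma$ and $\theta(\Sigma)$ act freely on $\GR$, with infinitely many orbits. Now fix $\pi|_H$ and look for the actions $\rho\,:\,G\actson\GR$ with $\rho|_H=\pi|_H$; such a $\rho$ is entirely determined by $\rho(t)=:\gamma\in\autR$, and by the universal property of HNN extensions $\rho$ is a well-defined homomorphism precisely when $\gamma\,\pi(\sigma)\,\gamma^{-1}=\pi(\theta(\sigma))$ for all $\sigma\in\Sigma$, i.e.\ when $\gamma\in\pi(t)\,Z$ with $Z=\{\beta\in\autR\,:\,\beta\pi(\sigma)=\pi(\sigma)\beta\ \text{for all }\sigma\in\Sigma\}$ the centralizer of the $\Sigma$-action. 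This $Z$ is a closed subgroup of the Polish group $\autR$, hence itself a Polish (in particular Baire) space, and the freeness of the $\Sigma$-action makes it large. For $\alpha\in Z$ let $\pi_\alpha$ be the action with $\pi_\alpha|_H=\pi|_H$ and $\pi_\alpha(t)=\pi(t)\alpha$ (so $\pi_e=\pi$).

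The heart of the argument is then to prove that $F:=\{\alpha\in Z\,:\,\pi_\alpha\text{ is faithful and homogeneous}\}$ is a dense $G_\delta$ in $Z$; Baire then yields an $\alpha\in F$, hence $G\in\mathcal{H}_{\Rc}$. For homogeneity, fix an enumeration of the triples $(U,V,\varphi)$ with $\varphi\,:\,U\to V$ an isomorphism of finite induced subgraphs of $\GR$, and show for each triple that $\{\alpha\in Z\,:\,\exists\,g\in G,\ \pi_\alpha(g)|_U=\varphi\}$ is open and dense in $Z$; openness is immediate, and density is a finite-approximation argument using the extension property of $\GR$ together with the room provided by the free $\Sigma$-action to perturb $\alpha$ inside $Z$ on a prescribed finite set, so that a suitable word in $H$ and $t^{\pm1}$ realizes $\varphi$ on $U$. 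For faithfulness, enumerate $G\setminus\{e\}$, put each $g\neq e$ in Britton-reduced form $g=h_0t^{\eps_1}h_1\cdots t^{\eps_n}h_n$ (no pinch $t\sigma t^{-1}$, $\sigma\in\Sigma$, nor $t^{-1}\theta(\sigma)t$, $\theta(\sigma)\in\theta(\Sigma)$), and show $\{\alpha\in Z\,:\,\pi_\alpha(g)\neq\id\}$ is open and dense; for $g\in H$ this holds because $\pi|_H$ is faithful, and for $g$ involving $t$ this is exactly where the hypothesis that $\Sigma$ and $\theta(\Sigma)$ are highly core-free in $H$ enters: at each occurrence of $t^{\pm1}$ it supplies an element of $H$ driving the current finite configuration off $\Sigma$ (resp.\ $\theta(\Sigma)$), and a ``ping-pong along the syllables of $g$'' then exhibits a vertex actually moved by $\pi_\alpha(g)$ after a generic perturbation of $\alpha$.

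The main obstacle is precisely this last density statement for Britton-reduced words that genuinely involve $t$: one must propagate a finite-stage control through all $n$ syllables while (i) perturbing $\alpha$ only within the centralizer $Z$ — a real constraint, reconciled with the need for ``enough room'' by the freeness of the $\Sigma$- and $\theta(\Sigma)$-actions — and (ii) keeping track of the fixed automorphism $\pi(t)$ hidden inside $\pi_\alpha(t)=\pi(t)\alpha$, since expanding $\pi_\alpha(g)$ produces an alternating product of the fixed $\pi(t)^{\pm1}$, the fixed $\pi(h_i)$ and the variable $\alpha^{\pm1}$. A subsidiary, more bookkeeping-type point is to extract from Corollary \ref{CorActR} the precise refined niceness (freeness and infinitely many orbits for $\Sigma$ and $\theta(\Sigma)$) that makes both families of perturbations available; once that is settled the scheme runs parallel to the free-product and amalgamated-free-product parts of Theorem \ref{ThmMain}.
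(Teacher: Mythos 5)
Your global architecture is exactly the paper's (Theorem \ref{ThmHNN}): fix a suitable action of $\Gamma=\HNN(H,\Sigma,\theta)$ on $\GR$, parametrize the homomorphisms extending the identity on $H$ by the closed set $Z=\{\alpha\in\autR:\alpha\sigma\alpha^{-1}=\theta(\sigma)\ \forall\sigma\in\Sigma\}$ (your coset $\pi(t)\cdot(\text{centralizer})$ is the same Polish space), and run a Baire category argument with the open sets $U_\varphi$ and $V_g$. The refinement of Corollary \ref{CorActR} you ask for is also genuinely needed and is supplied in the paper by the parametrized Random Extension (Corollary \ref{CorRelActR}, applied with $l=|\Sigma|=|\theta(\Sigma)|$). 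However, you have attached the key hypothesis to the wrong half of the argument, and in both halves the sketch as written would not close.

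For homogeneity, freeness of the $\Sigma$-action plus the extension property of $\GR$ is not enough. If you aim at $g=g_2tg_1$ with $g_1,g_2\in H$, the constraint $\gamma\sigma=\theta(\sigma)\gamma$ forces $\gamma$ on all of $\Sigma g_1 d(\varphi)$, namely $\gamma(\sigma g_1x)=\theta(\sigma)g_2^{-1}\varphi(x)$, and this forced partial map must already be a graph isomorphism before it can be extended equivariantly (Proposition \ref{PropExtension}) to an element of $Z$. The adjacencies between distinct $\Sigma$-translates of $g_1d(\varphi)$, and between them and $\Sigma F$, have no reason to match those between the corresponding $\theta(\Sigma)$-translates of $g_2^{-1}r(\varphi)$; one must first choose $g_1,g_2$ so that all such cross-adjacencies vanish and all translates are pairwise disjoint, and that is precisely what highly core-freeness of $\Sigma$ and of $\theta(\Sigma)$ with respect to $H\curvearrowright\GR$ provides. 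This is where the hypothesis of Theorem B is consumed (Lemma \ref{LemHNNHomogeneous}). Conversely, for faithfulness your mechanism cannot work as described: in a fixed Britton-reduced word $g=h_nt^{\epsilon_n}\cdots t^{\epsilon_1}h_0$ the syllables $h_i$ are given, so highly core-freeness --- an existential statement letting you \emph{choose} an element of $H$ adapted to a finite set --- supplies nothing. What is needed is a property of the ambient action itself, namely property $(F)$ together with non-singularity: these produce a vertex $x$ whose trajectory under the finitely many relevant group elements has no coincidences and carries no edges, and one then prescribes $\gamma$ to act like $t$ along that trajectory, obtaining $\pi_\gamma(g)x=gx\neq x$ (Lemma \ref{LemHNNfaithful}). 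Both ingredients are available from your initial set-up, but the proposal as stated deploys each of them in the place where the other is required.
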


\noindent By Bass-Serre theory we obtain the following result.

\begin{corC}
Let $\Gamma$ be a countable group acting, without inversion, on a non-trivial tree $\mathcal{T}$ in the sense of \cite{Se83}. If every vertex stabilizer of $\mathcal{T}$ is infinite and, for every edge $e$ of $\mathcal{T}$ the stabilizer of $e$ is finite and is a highly core-free subgroup of both the stabilizer of the source of $e$ and the stabilizer of the range of $e$, then $\Gamma\in\mathcal{H}_\Rc$.
\end{corC}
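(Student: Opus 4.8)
The plan is to extract a splitting of $\Gamma$ from Bass--Serre theory and feed it into Theorems A and B. Since $\Gamma$ acts without inversion on the non-trivial tree $\mathcal{T}$, it is isomorphic to the fundamental group $\pi_1(\mathcal{G},Y)$ of the quotient graph of groups, where $Y=\Gamma\backslash\mathcal{T}$ and the group attached to a vertex (resp.\ edge) of $Y$ is the corresponding vertex (resp.\ edge) stabiliser in $\mathcal{T}$. By hypothesis all vertex groups are infinite, and every edge group is finite and highly core-free in each of the two adjacent vertex groups; moreover $Y$ has at least one edge because $\mathcal{T}$ is non-trivial. Fix a spanning tree $T\subseteq Y$.

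Assume first that $Y$ is finite. If $Y\neq T$, choose an edge $e\in Y\setminus T$; then $Y\setminus\{e\}$ still contains $T$, hence is connected, and with $\Gamma_0:=\pi_1(\mathcal{G},Y\setminus\{e\})$ (an infinite countable group, since it contains a vertex group) Bass--Serre theory gives $\Gamma\cong\HNN(\Gamma_0,\Gamma_e,\theta)$, where the two embeddings $\Gamma_e\hookrightarrow\Gamma_0$ and $\theta(\Gamma_e)\hookrightarrow\Gamma_0$ factor through the vertex groups of the two endpoints of $e$. If instead $Y=T$, choose any edge $e$; then $Y\setminus\{e\}$ has exactly two components, with fundamental groups $\Gamma'$ and $\Gamma''$, each an infinite countable group, and $\Gamma\cong\Gamma'\underset{\Gamma_e}{*}\Gamma''$, with $\Gamma_e$ embedded in each factor through the vertex group of the endpoint of $e$ lying in that component. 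In both cases every ambient group is infinite, so once we know that the relevant copy of $\Gamma_e$ is highly core-free in $\Gamma_0$ (resp.\ in $\Gamma'$ and in $\Gamma''$) we are precisely in the hypotheses of Theorem B (resp.\ of the amalgam case of Theorem A with both factors infinite), and we conclude $\Gamma\in\mathcal{H}_\Rc$.

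So the crux is the following permanence statement: if $\Sigma$ is a finite subgroup of a group $H$ with $\Sigma$ highly core-free in $H$, and $H$ appears as a vertex group of a finite graph of groups all of whose edge groups are finite and highly core-free in the adjacent vertex groups, then $\Sigma$ is highly core-free in the fundamental group. One proves this by induction on the number of edges: peeling off one edge expresses the fundamental group either as $H_1\underset{\Lambda}{*}H_2$ or as $\HNN(H_1,\Lambda,\vartheta)$, where $\Lambda$ is a finite edge group and $H$ sits inside $H_1$; by the inductive hypothesis $\Sigma$ is highly core-free in $H_1$ and $\Lambda$ (resp.\ $\Lambda$ and $\vartheta(\Lambda)$) is highly core-free in the factor(s), and it then remains to invoke the elementary fact that high core-freeness of $\Sigma$ in $H_1$ is preserved on passing to $H_1\underset{\Lambda}{*}H_2$ or to $\HNN(H_1,\Lambda,\vartheta)$ under these conditions. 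This is part of the basic calculus of highly core-free subgroups recalled in Section~1.2 (cf.\ \cite{FMS15}), and is essentially the same permanence already used to establish Theorems A and B.

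Finally, when $Y$ is infinite, write it as an increasing union $Y=\bigcup_nY_n$ of finite connected subgraphs, so that $\Gamma$ is the directed union of the subgroups $\Gamma_n:=\pi_1(\mathcal{G},Y_n)$, each infinite, countable and --- by the finite case --- in $\mathcal{H}_\Rc$. The delicate point is that $\mathcal{H}_\Rc$ is not obviously closed under directed unions: one has to run the Baire-category argument underlying Theorems A and B uniformly in $n$, producing a single automorphism of $\mathcal{R}$ for which the resulting faithful homogeneous actions of the groups $\Gamma_n$ are compatible, and therefore glue to a faithful action of $\Gamma$ on $\mathcal{R}$ that is homogeneous (being already homogeneous on $\Gamma_1$). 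I expect the two technical inputs just mentioned --- permanence of the highly core-free property under the Bass--Serre moves, and compatibility of the generic actions in the non-cocompact case --- to be the only real work; the Bass--Serre bookkeeping itself is routine.
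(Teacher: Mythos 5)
Your treatment of the case where the quotient graph $Y=\Gamma\backslash\mathcal{T}$ is finite is essentially the paper's argument: remove a single edge $e_0$ (and $\overline{e_0}$) from the quotient graph of groups, obtain either an HNN extension over $\Sigma_{e_0}$ (if the result is connected) or an amalgam over $\Sigma_{e_0}$ (if it splits into two components), check that the edge group is still finite and highly core-free in the new, infinite factor(s), and invoke Theorem B or Theorem A. (The paper takes the permanence of high core-freeness under passing from a vertex group to any overgroup as known from \cite{FMS15} rather than re-proving it by induction on the number of edges, but that is a cosmetic difference.)

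The genuine gap is your handling of an infinite quotient graph. You reduce to a directed union $\Gamma=\bigcup_n\Gamma_n$ and then concede that $\mathcal{H}_\Rc$ is not known to be closed under directed unions, proposing to ``run the Baire-category argument uniformly in $n$'' to produce compatible actions. As written this is not a proof: Theorems A and B each perturb one fixed splitting by a generic $\alpha$, and nothing guarantees that the generic actions obtained for $\Gamma_n$ and $\Gamma_{n+1}$ (which come from different splittings) restrict to one another; arranging that compatibility is precisely the hard part and is left undone. The point you are missing is that no limit argument is needed: the one-edge splitting works verbatim when the quotient graph is infinite. Removing $e_0$ and $\overline{e_0}$ from $\mathcal{G}$ still expresses $\Gamma$ as $\HNN(H,\Sigma_{e_0},\theta)$ or as $\Gamma_1\underset{\Sigma_{e_0}}{*}\Gamma_2$, where $H$ (resp. $\Gamma_1,\Gamma_2$) is the fundamental group of the restricted --- possibly infinite --- graph of groups; these groups are countable, infinite (each contains an infinite vertex group), and the edge group is finite and highly core-free in them by the overgroup permanence, which does not care how the overgroup is built. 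Theorems A and B require nothing more. This is exactly what the paper does, in one step and with no case distinction on the size of $Y$.
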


\vspace{0.2cm}

\noindent Finally, we study the ubiquity of dense free subgroups of $\aut(\GR)$. Gartside and Knight \cite{GK03} gave necessary and sufficient conditions for a Polish topological group to be ``almost free\footnote{\noindent A Polish group $G$ is \textit{almost free} if for all $n\geq 2$, the set $\{(g_1,\dots,g_n)\in G^n : g_1,\dots,g_n$ freely generates a free subgroup of $G\}$ is a dense $G_\delta$ in $G^n$.}'', and gave applications to permutation groups, profinite groups, Lie groups and unitary groups. In particular, they showed that if $M$ is $\aleph_0$-categorical then  $\Aut(M)$ is almost free. There are abundant results on the ubiquity of free subgroups in various classes of groups. In particular, almost freeness of various oligomorphic\footnote{\noindent A permutation group $G< S(X)$ is called \textit{oligomorphic} if $G$ has only finitely many orbits on $X^n$ for every $n\in \N$. } groups has been shown in \cite{Dr85}, \cite{Ka92}, \cite{GMR93}, \cite{Ca96} and \cite{GK03}. We prove the following result. For $k\geq 2$ and $\bar\alpha=(\alpha_1,\dots,\alpha_k)\in\autR^k$, we denote by $\langle\bar\alpha\rangle$ the subgroup of $\autR$ generated by $\alpha_1,\dots,\alpha_k$, and we set $\mathcal{A}_k:=\{\bar\alpha\in\autR^k\,:\,\langle\bar\alpha\rangle\curvearrowright\GR\text{ has all orbits infinite}\}$. Since $\mathcal{A}_k$ is closed in $\autR^k$, it is a Baire space.

\begin{theoD}For all $k\geq 2$, the set of $\bar\alpha=(\alpha_1,\dots,\alpha_k)\in\autR^k$ such that $\langle\bar\alpha\rangle$ is a free group with basis $\bar\alpha$ and $\langle\bar\alpha\rangle\curvearrowright\mathcal{R}$ is homogeneous with all orbits infinite is a dense $G_\delta$ in $\mathcal{A}_k$.
\end{theoD}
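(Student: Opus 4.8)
The plan is to run a Baire category argument inside the Baire space $\mathcal{A}_k$. First I would exhibit the set in the statement as the intersection of $\mathcal{A}_k$ with an explicit $G_\delta$ subset of $\autR^k$. For each non-trivial reduced word $w$ in $k$ letters the set $O_w:=\{\bar\alpha\in\autR^k : w(\bar\alpha)\ne\id\}$ is open, since $\bar\alpha\mapsto w(\bar\alpha)$ is continuous and $\{\id\}$ is closed in $\autR$, and $\bigcap_w O_w$ is exactly the set of $\bar\alpha$ that freely generate a free group with basis $\bar\alpha$. For each graph isomorphism $\varphi\colon U\to V$ between finite induced subgraphs of $\mathcal{R}$ the set $O_\varphi:=\bigcup_w\{\bar\alpha : w(\bar\alpha)|_U=\varphi\}$ is open, each $\{\bar\alpha : w(\bar\alpha)|_U=\varphi\}$ being the preimage of a clopen set under a continuous map, and $\bigcap_\varphi O_\varphi$ is exactly the set of $\bar\alpha$ whose action on $\mathcal{R}$ is homogeneous. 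As there are only countably many such $w$ and $\varphi$, it suffices to show that every $O_w\cap\mathcal{A}_k$ and every $O_\varphi\cap\mathcal{A}_k$ is dense in $\mathcal{A}_k$. A non-empty basic open subset of $\mathcal{A}_k$ contains one of the form $W=\{\bar\alpha\in\mathcal{A}_k:\alpha_i|_{F_i}=\gamma_i|_{F_i}\text{ and }\alpha_i^{-1}|_{F_i}=\gamma_i^{-1}|_{F_i}\ (1\le i\le k)\}$ for some $\bar\gamma\in\mathcal{A}_k$ and finite vertex sets $F_i$; let $G$ be the finite union of the $F_i$ and of their direct and inverse images under the $\gamma_i$.

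The common last step of both density proofs is a completion lemma: finite partial isomorphisms $p_1,\dots,p_k$ of $\mathcal{R}$ with $p_i\supseteq\gamma_i|_{F_i}$ can be extended to automorphisms $\alpha_1,\dots,\alpha_k$ of $\mathcal{R}$ for which $\langle\bar\alpha\rangle$ has all orbits infinite. Here one uses $\bar\gamma\in\mathcal{A}_k$: starting from any vertex $v$ and applying forced steps one can never remain forever in the finite set $D:=\bigcap_i(\operatorname{dom}p_i\cap\operatorname{ran}p_i)$, since such a trajectory would be a finite $\langle\bar\gamma\rangle$-invariant set (the $p_i$ agreeing with $\gamma_i$ there), contradicting infinitude of $\langle\bar\gamma\rangle$-orbits; so $v$ can be pushed out of $D$ and then along pairwise distinct fresh vertices, making its $\langle\bar\alpha\rangle$-orbit infinite. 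A back-and-forth enumeration, using repeatedly the defining extension property of $\mathcal{R}$ to choose fresh vertices with prescribed adjacencies, carries this out for all vertices while turning the $p_i$ into automorphisms. Granting this, density of $O_w\cap\mathcal{A}_k$ is easy: given $W$ and $w=s_{i_n}^{\varepsilon_n}\cdots s_{i_1}^{\varepsilon_1}$ reduced, pick $x_0\notin G$ and build pairwise distinct vertices $x_1,\dots,x_n\notin G$ with $x_n\ne x_0$ by successively imposing $\alpha_{i_j}^{\varepsilon_j}(x_{j-1})=x_j$ — at each step the relevant generator is not yet constrained at $x_{j-1}$, reducedness prevents forced backtracking, and the extension property supplies $x_j$ with the right adjacencies — and then complete via the lemma.

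Density of $O_\varphi\cap\mathcal{A}_k$ is the heart of the matter, and again exploits $\bar\gamma\in\mathcal{A}_k$. Since every $\langle\bar\gamma\rangle$-orbit is infinite while $G$ is finite, the set of $g\in\langle\bar\gamma\rangle$ with $g(U\cup V)\cap G\ne\emptyset$ is the complement of a finite union of cosets of infinite-index subgroups, hence $\langle\bar\gamma\rangle$ contains an element $\rho(\bar\gamma)$ whose trajectory on $U\cup V$ eventually reaches a configuration disjoint from $G$ and from all earlier configurations of the trajectory. Imposing $\alpha_i=\gamma_i$ along this trajectory, the sets $U_1:=\rho(\bar\gamma)(U)$ and $V_1:=\rho(\bar\gamma)(V)$ become induced copies of $U$ and $V$ lying outside the currently committed region, and $\varphi_1:=\rho(\bar\gamma)\varphi\rho(\bar\gamma)^{-1}\colon U_1\to V_1$ is a graph isomorphism. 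Using $k\ge2$, one then inserts a short bridge word $w'$ in the generators realising $w'(\bar\alpha)|_{U_1}=\varphi_1$, by routing $U_1$ through finitely many fresh vertices into general position and on to $V_1$; the intermediate vertices are chosen, via the extension property of $\mathcal{R}$, to have the adjacencies to the finitely many already-committed vertices dictated by coherence, and a generator avoiding the last letter of $\rho$ is used for the final step so that $V_1$ is not already in its committed range. Completing via the lemma yields $\bar\alpha\in W\cap\mathcal{A}_k$ with $w(\bar\alpha)|_U=\varphi$ for $w:=\rho^{-1}w'\rho$, so $\bar\alpha\in W\cap O_\varphi$.

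The main obstacle is precisely the bookkeeping in these constructions: every new vertex introduced (the intermediate points of the transporting trajectory, the bridge vertices, the vertices driving orbits to infinity) must be chosen so that, simultaneously, all partial maps remain injective and adjacency-preserving — in particular the glued partial maps must be genuine partial isomorphisms of $\mathcal{R}$, which imposes coherence constraints between the new vertices and the finitely many committed ones — the new vertices avoid the committed region, the various trajectories do not collide, and the endpoints land genuinely outside it. The defining extension property of the Random Graph is what makes each finite package of such requirements solvable, and the hypothesis $\bar\gamma\in\mathcal{A}_k$ is what guarantees that the forced, non-free initial segments of the construction can always be escaped; organising the choice of $\rho$, of the bridge word, and of the order in which constraints are imposed so that no conflict ever arises is where the real work lies, and is likely handled with the inductive-limit machinery already developed for Theorems~A and~B.
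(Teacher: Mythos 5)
Your skeleton is the same as the paper's: write the target set as a countable intersection of open sets ($O_w$ for freeness, $O_\varphi$ for homogeneity) and prove each is dense in $\mathcal{A}_k$ by modifying a given $\bar\gamma\in\mathcal{A}_k$ off a finite set, using a conjugation $w=\rho^{-1}w'\rho$ to realize $\varphi$ on a far-away translate. But there are two genuine gaps. First, your ``completion lemma'' is false as stated: arbitrary finite partial isomorphisms $p_i\supseteq\gamma_i|_{F_i}$ need not be completable with all orbits infinite (a $p_i$ could already contain a cycle disjoint from $F_i$, which every extension preserves as a finite invariant set). Your justification — that a trapped trajectory would be a finite $\langle\bar\gamma\rangle$-invariant set — only works where the $p_i$ agree with $\gamma_i$, i.e.\ on $F_i$, not on all of $\bigcap_i(\operatorname{dom}p_i\cap\operatorname{ran}p_i)$. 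The burden is therefore shifted entirely onto the construction of the $p_i$, which you defer.

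Second, and more seriously: for the glued partial map (your bridge $w'$, the paper's $\tau$) to be a partial \emph{graph} isomorphism, it is not enough that the transported copies $U_1=\rho(U)$, $V_1=\rho(V)$ be \emph{disjoint} from the committed region $G$ — they must be \emph{disconnected} from it (no edges), and likewise the images must be disconnected from the image of the committed region. Neumann's lemma, which is what your coset argument amounts to, gives disjointness only; in the Random Graph two disjoint finite sets are generically joined by many edges, and those adjacencies are already determined by $\rho$ and cannot be prescribed afterwards by the extension property. This is precisely the obstruction the paper's Section 6 is built to overcome: it first replaces $\bar\alpha$ by a ``treezation'' $\bar\beta$ (agreeing with $\bar\alpha$ on $F$, all orbits infinite, Schreier graph a forest outside $\widetilde F$, and every edge in the orbit of $\widetilde F$ a translate of an edge inside $\widetilde F$), and only then can it prove that $\beta(a_i^{\pm s}u^2)\widetilde F$ is disconnected from the committed region $\widetilde K$. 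Your closing suggestion that the difficulty ``is likely handled with the inductive-limit machinery already developed for Theorems A and B'' points at the wrong tool: that machinery builds an action of $\mathcal{R}$ from scratch, whereas here one must stay within a prescribed neighbourhood of the given $\bar\gamma$; the paper instead introduces the elementary-extension/treezation device specifically for this theorem. Without something playing that role, the density of $O_\varphi\cap\mathcal{A}_k$ is not established.
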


\noindent To prove Theorem $D$ we use the ``back-and-forth" device (Theorem \ref{free-acts-homogeneously}).

\vspace{0.2cm}

\noindent The paper is organized as follows. In Section $1$ we introduce the notations used in the paper about graphs, random extensions and inductive limits. In Section $2$ we introduce the Random Graph and we show how to extend any group action on a finite or countable graph to a group action on the Random Graph. We study the basic properties of the extension and the properties of groups acting homogeneously of $\Rc$. We prove Theorem $A$ in Section $3$, Theorem $B$ in Section $4$, Corollary $C$ in Section $5$ and Theorem $D$ in Section $6$.

\section{preliminaries}

\subsection{Graphs}

A \textit{Graph} $\Gr$ is a pair $(V(\Gr),E(\Gr))$, where $V(\Gr)$ is a set and $E(\Gr)\subset V(\Gr)^2$ is a subset such that $E(\Gr)\cap\Delta=\emptyset$, where $\Delta=\{(u,u)\in V(\Gr)^2\,:\,u\in V(\Gr)\}$, and $(u,v)\in E(\Gr)\Leftrightarrow (v,u)\in E(\Gr)$. For $u,v\in V(\Gr)$, we write $u\sim v$ if $(u,v)\in E(\Gr)$.

\vspace{0.2cm}

\noindent Any subset $U\subset V(\Gr)$ has a natural structure of a graph, called the \textit{induced graph structure on $U$} and denoted by $\Gr_U$, defined by $V(\Gr_U)=U$ and $E(\Gr_U)=E(\Gr)\cap (U\times U)$.

\vspace{0.2cm}

\noindent Let $\Gr_1$, $\Gr_2$ be two graphs. A \textit{morphism of graphs from $\Gr_1$ to $\Gr_2$} is a map $\pi\,:\,V(\Gr_1)\rightarrow V(\Gr_2)$ such that, for all $u,v\in V(\Gr_1)$, $u\sim v\implies \pi(u)\sim\pi(v)$. A morphism $\pi$ is called an \textit{isomorphism} of graphs if $\pi$ is moreover bijective and its inverse $\pi^{-1}\,:\, V(\Gr_2)\rightarrow V(\Gr_1)$ is a morphism of graphs. A morphism of graphs $\pi\,:\,\Gr_1\rightarrow\Gr_2$ is called \textit{open} if
$$\text{for all }u,v\in V(\Gr_1),\,\,u\sim v\Leftrightarrow \pi(u)\sim\pi(v).$$
Note that the inclusion $\iota$ of a subset $U\subset V(\Gr)$ gives an open and injective graph homomorphism $\iota\,:\,\Gr_U\rightarrow\Gr$. Moreover, an injective graph homomorphism $\pi\,:\,\Gr_1\rightarrow\Gr_2$ induces an isomorphism between $\Gr_1$ and the induced graph on $\pi(V(\Gr_1))$ if and only if $\pi$ is open.

\vspace{0.2cm}

\noindent A \textit{partial isomorphism} of a graph $\Gr$ is an isomorphism between induced subgraphs of $\Gr$. For a partial isomorphism $\varphi$, we write $d(\varphi)$ its domain and $r(\varphi)$ its range. We denote by $P(\Gr)$ the set of finite partial isomorphisms i.e. those partial isomorphism for which $d(\varphi)$ is finite (hence $r(\varphi)$ is also finite).

\vspace{0.2cm}

\noindent Given a countable graph $\Gr$, we write Aut$(\Gr)$ the group of isomorphisms from $\Gr$ to $\Gr$. An action of a group $\Gamma$ on the graph $\Gr$ is a group homomorphism $\alpha\,:\,\Gamma\rightarrow\text{Aut}(\Gr)$. We write $\Gamma\curvearrowright\Gr$ for an action of $\Gamma$ on $\Gr$. Let $S(X)$ be the group of bijections of a countable set $X$. It is a Polish group under the topology of pointwise convergence. Note that Aut$(\Gr)\subset S(V(\Gr))$ is a closed subgroup hence, Aut$(\Gr)$ is a Polish group.

\subsection{Group actions}
The main purpose of this section is to introduce the \textit{Random Extension} of a group action on a graph. This notion will be crucial to produce explicit actions on the Random Graph (Section 2.2). Many properties of an action are preserved by its Random Extension (Proposition \ref{PropRandomExt}). However, the freeness property is not preserved; except for actions of torsion-free groups. Nevertheless, a weaker -but still useful- property, that we call \textit{strong faithfulness}, is preserved.

\vspace{0.2cm}

\noindent An action $\Gamma\curvearrowright X$ of a group $\Gamma$ on a set $X$ is called \textit{strongly faithful} if, for all finite subsets $F\subset X$ with $X\setminus F\neq\emptyset$ one has $g(x)=x$ $\forall x\notin F$ $\implies$ $g=1$. When $X$ is finite, an action on $X$ is strongly faithful if and only if it is free. When $X$ is infinite, an action $\Gamma\curvearrowright X$ is strongly faithful if and only if for all $g\in\Gamma\setminus \{ 1\}$, the set $\{x\in X\,:\,gx\neq x\}$ is infinite. Note also that, for an action $\Gamma\curvearrowright X$ with $X$ infinite, one has: almost free (i.e. every non-trivial group element has finitely many fixed points) $\implies$ strongly faithful $\implies$ faithful.

\vspace{0.2cm}

\noindent An action $\Gamma\curvearrowright\Gr$ of a group $\Gamma$ on a graph $\Gr$ is called \textit{homogeneous} if for all $\varphi\in P(\Gr)$ (recall that $d(\varphi)$ and $r(\varphi)$ are supposed to be finite), there exists $g\in\Gamma$ such that $gu=\varphi(u)$ for all $u\in d(\varphi)$. It is easy to see that $\Gamma\curvearrowright\Gr$ is homogeneous if and only if the image of $\Gamma$ in Aut$(\Gr)$ is dense.

\vspace{0.2cm}

\noindent We say that $\Gamma\curvearrowright\Gr$ has \textit{infinite orbits} (resp. is \textit{free}, resp. is \textit{strongly faithful}) if the action $\Gamma\curvearrowright V(\Gr)$ on the set $V(\Gr)$ has infinite orbits i.e. all orbits are infinite (resp. is free, resp. is strongly faithful).

\vspace{0.2cm}

\noindent We say that $\Gamma\curvearrowright\Gr$ \textit{disconnects the finite sets} if for all finite subsets $F\subset V(\Gr)$ there exists $g\in \Gamma$ such that $g F\cap F=\emptyset$ and, for all $u,v\in F$, $gu\nsim v$. Note that if $\Gamma\curvearrowright\Gr$ disconnects the finite sets then it has infinite orbits and the converse holds when $E(\Gr)=\emptyset$.

\vspace{0.2cm}

\noindent We say that $\Gamma\curvearrowright\Gr$ is \textit{non-singular} if for every $u\in V(\Gr)$ and $g\in\Gamma$ we have $gu\nsim u$.
\vspace{0.2cm}

\noindent The notion of a \textit{highly core-free} subgroup $\Sigma<\Gamma$ has been introduced in \cite{FMS15}: it is a strengthening of the notion of core-free subgroup. Recall that, given a nonempty subset $S\subset \Gamma$, the normal core of a subgroup $\Sigma<\Gamma$ relative to $S$ is defined by ${\rm Core}_S(\Sigma)=\cap_{h\in S}h^{-1}\Sigma h$. Then, $\Sigma$ is called \textit{core-free} if ${\rm Core}_\Gamma(\Sigma)=\{1\}$. Now, $\Sigma$ is called \textit{highly core-free} if, for every finite covering of $\Gamma$ with non-empty sets, up to finitely many $\Sigma$-classes, there exists at least one set in the covering for which the associated normal core is trivial. More precisely, it means that, for every finite subset $F\subset \Gamma$, for any $n\geq 1$, for any non-empty subsets $S_1,\dots, S_n\subset\Gamma$ such that $\Gamma-\Sigma F\subset \cup_{k=1}^n S_k$ there exists $1\leq k\leq n$ such that ${\rm Core}_{S_k}(\Sigma)=\{1\}$. Many examples of highly core-free subgroups are given in \cite{FMS15}. Let us mention some of them bellow.
\begin{itemize}
\item For any countable field $K$, any $d\geq 2$, the stabilizer in ${\rm PSL}_d(K)$ of any point in ${\rm P}^1(K^d)$ for the natural action ${\rm PSL}_d(K)\curvearrowright{\rm P}^1(K^d)$ is a highly core-free subgroup of ${\rm PSL}_d(K)$.
\item Any finite subgroup of an icc group is highly core-free (a group is called \textit{icc} if the conjugacy class of every non-trivial group element is infinite).
\item A finite malnormal subgroup of an infinite group is highly core-free ($\Sigma<\Gamma$ is called \textit{malnormal} if $\forall g\in\Gamma-\Sigma$ one has $\Sigma\cap g\Sigma g^{-1}=\{1\}$).
\item If $\Lambda$ and $\Sigma$ are non trivial groups and $\Sigma$ is abelian then $\Sigma$ is highly core-free in $\Gamma=\Lambda*\Sigma$. More generally, an abelian malnormal subgroup of infinite index is highly core-free.
\item For any infinite (commutative) field, $K^*< K^*\ltimes K$ is malnormal, in particular highly core-free by the previous example.
\end{itemize}
Let us finally note that being highly core-free is a strictly stronger property than being core-free. Indeed, let us denote by $S_\infty$ the group of finitely supported bijections of $\N$. Then, the stabilizer of any $n\in\N$ in $S_\infty$ is a core-free but not highly core-free subgroup of $S_\infty$.

\vspace{0.2cm}

\noindent The notion of a highly core-free subgroup $\Sigma<\Gamma$ with respect to an action $\Gamma\curvearrowright X$ on the set $X$ has also been introduced in \cite{FMS15}. It is defined in such a way that $\Sigma<\Gamma$ is highly core-free if and only if it is highly core-free with respect to the left translation action $\Gamma\curvearrowright\Gamma$. We will use here a similar notion for an action $\Gamma\curvearrowright\Gr$ on a \textit{graph} $\Gr$. Let $\Sigma<\Gamma$ be a subgroup and $\Gamma\curvearrowright\Gr$ be an action on the graph $\Gr$. We say that $\Sigma$ is \textit{highly core-free with respect to} $\Gamma\curvearrowright\Gr$ if for every finite subset $F\subset V(\Gr)$ there exists $g\in\Gamma$ such that $gF\cap \Sigma F=\emptyset$, $gu\nsim v$ for all $u,v\in F$, $\Sigma gu\cap\Sigma gv=\emptyset$ for all $u,v\in F$ with $u\neq v$ and $\sigma gu\nsim gv$ for all $u,v\in F$ and all $\sigma\in\Sigma\setminus \{1\}$. In practice, we will use the following equivalent definition (obtained by replacing $F$ by $F_1\cup F_2$): for every finite subsets $F_1,F_2\subset V(\Gr)$ there exists $g\in\Gamma$ such that $gF_1\cap \Sigma F_2=\emptyset$, $gx\nsim u$, $\sigma gx\nsim gx'$ for all $u\in F_2$, $x,x'\in F_1$, $\sigma\in\Sigma\setminus \{1\}$ and $\Sigma gx\cap\Sigma gx'=\emptyset$ for all $x,x'\in  F_1$ with $x\neq x'$.

It is clear that if $\Sigma$ is highly core-free w.r.t. $\Gamma\curvearrowright\Gr$ then the action $\Gamma\curvearrowright\Gr$ disconnects the finite sets. We adopt a terminology slightly different from \cite{FMS15}: if $E(\Gr)=\emptyset$ then $\Sigma$ is highly core-free with respect to the action $\Gamma\curvearrowright\Gr$ on the graph $\Gr$ in the sense explained above if and only if $\Sigma$ is strongly highly core-free with respect to the action $\Gamma\curvearrowright V(\Gr)$ on the set $V(\Gr)$ in the sense of \cite[Definition 1.7]{FMS15}. In particular, if $E(\Gr)=\emptyset$ and $\Gamma\curvearrowright\Gr$ is free then $\Sigma$ is highly core-free with respect to $\Gamma\curvearrowright\Gr$ if and only if $\Sigma$ is highly-core free in $\Gamma$ (see \cite[Definition 1.1 and Lemma 1.6]{FMS15}. Note also that the trivial subgroup is highly core-free w.r.t. $\Gamma\curvearrowright\Gr$ if and only if the action $\Gamma\curvearrowright\Gr$ disconnects the finite sets and, if $\Sigma$ is highly core-free with respect to $\Gamma\curvearrowright\Gr$ then every subgroup of $\Sigma$ is highly core-free with respect to $\Gamma\curvearrowright\Gr$. In particular, if there exists a subgroup $\Sigma$ which is highly core-free with respect to $\Gamma\curvearrowright\Gr$ then the action $\Gamma\curvearrowright\Gr$ disconnects the finite sets. 
\vspace{0.2cm}

\noindent In the sequel we always assume that $\Gr$ is a non-empty graph. \textit{The Random Extension} of $\Gr$ is the graph $\widetilde{\Gr}$ defined by $V(\widetilde{\Gr})=V(\Gr)\sqcup\mathcal{P}_f(V(\Gr))$, where $\mathcal{P}_f(X)$ denotes the set of non-empty finite subsets of a set $X$ and $\sqcup$ denotes the disjoint union, and:
$$E(\widetilde{\Gr})=E(\Gr)\sqcup(\sqcup_{U\in\mathcal{P}_f(V(\Gr))}U\times\{U\})\sqcup(\sqcup_{U\in\mathcal{P}_f(V(\Gr))}\{U\}\times U)$$
i.e., given $a,b\in V(\Gr)$ and $U,V\in\mathcal{P}_f(V(\Gr))$ one has:
$$(a,b)\in E(\widetilde{\Gr})\Leftrightarrow (a,b)\in E(\Gr),\quad (U,V)\notin  E(\widetilde{\Gr}),\quad (a,U)\in E(\widetilde{\Gr})\Leftrightarrow(U,a)\in E(\widetilde{\Gr})\Leftrightarrow a\in U.$$
Note that the inclusion $\iota\,:\,V(\Gr)\rightarrow V(\widetilde{\Gr})$ defines an injective and open graph homomorphism $\iota\,:\,\Gr\rightarrow\widetilde{\Gr}$.

\vspace{0.2cm}

\noindent Given an action $\Gamma\curvearrowright\Gr$ there is a natural action $\Gamma\curvearrowright\widetilde{\Gr}$ for which the map $\iota$ is $\Gamma$-equivariant. Indeed, we take the original action of $\Gamma$ on $V(\Gr)\subset V(\widetilde{\Gr})$ and, for $U\in P_f(V(\Gr))\subset V(\widetilde{\Gr})$ and $g\in\Gamma$, we define the action of $g$ on $U$ by $g\cdot U:=gU:=\{gu\,:\,u\in U\}\in P_f(V(\Gr))$. This defines an action of $\Gamma$ on $\widetilde{\Gr}$ since for all $z\in V(\widetilde{\Gr})$, $U\in P_f(V(\Gr))$ and all $g\in\Gamma$ we have
$$z\sim U\Leftrightarrow z\in U\Leftrightarrow gz\in gU\Leftrightarrow g\cdot z\sim g\cdot U.$$
It is clear that $\iota$ is $\Gamma$-equivariant.

\begin{proposition}\label{PropRandomExt}
 The following holds.
\begin{enumerate}
\item $\Gamma\curvearrowright\widetilde{\Gr}$ is faithful if and only if $\Gamma\curvearrowright\Gr$ is faithful.
\item $\Gamma\curvearrowright\widetilde{\Gr}$ is non-singular if and only if $\Gamma\curvearrowright\Gr$ is non-singular.
\item $\Gamma\curvearrowright\widetilde{\Gr}$ has infinite orbits if and only if $\Gamma\curvearrowright\Gr$ has infinite orbits.
\item $\Gamma\curvearrowright\widetilde{\Gr}$ disconnects the finite sets if and only if $\Gamma\curvearrowright\Gr$ disconnects the finite sets.
\item If $\Gamma$ is torsion-free, $\Gr$ is infinite and $\Gamma\curvearrowright\Gr$ is free then $\Gamma\curvearrowright\widetilde{\Gr}$ is free. If $\Gamma$ has torsion then $\Gamma\curvearrowright\widetilde{\Gr}$ is not free. 
\item If $\Gamma\curvearrowright\Gr$ is strongly faithful and $\Gr$ is infinite then $\Gamma\curvearrowright\widetilde{\Gr}$ is strongly faithful.
\item Let $\Sigma<\Gamma$ be a subgroup. $\Sigma$ is highly core-free w.r.t. $\Gamma\curvearrowright\Gr$ if and only if it is highly core-free w.r.t. $\Gamma\curvearrowright\widetilde{\Gr}$.
\end{enumerate}
\end{proposition}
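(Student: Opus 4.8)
The plan is to reduce every item to three elementary features of the Random Extension $\widetilde{\Gr}$. First, $\iota$ realises $\Gr$ as the induced subgraph of $\widetilde{\Gr}$ on $V(\Gr)\subset V(\widetilde{\Gr})$ and is $\Gamma$-equivariant, so the $\Gamma$-action on $\widetilde{\Gr}$ restricts to the given one on $\Gr$ and two vertices of $V(\Gr)$ are adjacent in $\widetilde{\Gr}$ exactly when they are adjacent in $\Gr$. Second, a vertex $U\in\mathcal{P}_f(V(\Gr))$ has neighbour set exactly $U$; in particular no two vertices of $\mathcal{P}_f(V(\Gr))$ are ever adjacent, and $u\in V(\Gr)$ is adjacent to $U$ iff $u\in U$. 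Third, the action of $g$ on the $\mathcal{P}_f$-part is forced by the action on $V(\Gr)$, namely $g\cdot U=gU$. With this in hand, each item becomes a short case analysis according to whether the vertices involved lie in $V(\Gr)$ or in $\mathcal{P}_f(V(\Gr))$.

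For (1) and (2) I would note that $g$ fixes every vertex of $\widetilde{\Gr}$ iff it fixes $V(\Gr)$ pointwise (once it does, $gU=U$ automatically), so the two kernels coincide; and that an adjacency $g\cdot z\sim z$ in $\widetilde{\Gr}$ forces $z\in V(\Gr)$ — since a $\mathcal{P}_f$-vertex $U$ has all its neighbours in $V(\Gr)$ whereas $gU$ lies in $\mathcal{P}_f(V(\Gr))$ — hence is already witnessed in $\Gr$. For (3), the $\widetilde{\Gr}$-orbit of $u\in V(\Gr)$ equals its $\Gr$-orbit, and for $U\in\mathcal{P}_f(V(\Gr))$ a finite orbit $\{gU:g\in\Gamma\}$ would make $\bigcup_{g}gU$ a finite set containing the orbit of any chosen $u\in U$, contradicting infiniteness of orbits in $\Gr$; the converse is immediate from invariance of $V(\Gr)$.

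For (5), if $g$ has finite order $n$ then the $\langle g\rangle$-orbit $U:=\{u,gu,\dots,g^{n-1}u\}$ of any $u\in V(\Gr)$ is a non-empty finite subset of $V(\Gr)$, hence a vertex of $\widetilde{\Gr}$, and $g\cdot U=gU=U$, so the extension is not free; if instead $\Gamma$ is torsion-free and acts freely on $\Gr$, a non-trivial $g$ fixes no vertex of $V(\Gr)$, and fixing some $U\in\mathcal{P}_f(V(\Gr))$ would make $g$ permute the finite set $U$, so $g^k$ would fix an element of $U$ for some $k\geq1$, contradicting freeness on $\Gr$ since $g$ has infinite order. For (6), $\Gr$ infinite makes $V(\widetilde{\Gr})$ infinite, and for $g\neq1$ the set $\{u\in V(\Gr):gu\neq u\}$ is infinite by strong faithfulness on $\Gr$ and sits inside $\{z\in V(\widetilde{\Gr}):g\cdot z\neq z\}$.

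Items (4) and (7) are the substantive ones, and (7) is where I expect the real obstacle. For both, given a finite $\widetilde{F}=F_0\sqcup\mathcal{U}\subset V(\widetilde{\Gr})$ with $F_0\subset V(\Gr)$ and $\mathcal{U}\subset\mathcal{P}_f(V(\Gr))$, I would pass to $F:=F_0\cup\bigcup_{U\in\mathcal{U}}U\subset V(\Gr)$, apply the relevant hypothesis on $\Gr$ to $F$ to obtain a suitable $g$, and verify the required conditions for $\widetilde{F}$ in $\widetilde{\Gr}$ by the type-cases; the converse directions follow by restricting to $\widetilde{F}\subset V(\Gr)$, the adjacency and $\Sigma$-set conditions being read identically in $\Gr$ and $\widetilde{\Gr}$. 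Case (4) is routine: every case of $g\cdot z\nsim w$ and the emptiness of $g\widetilde{F}\cap\widetilde{F}$ reduce to $gF\cap F=\emptyset$ or to the disconnecting condition for $F$. In case (7) the conditions $g\widetilde{F}\cap\Sigma\widetilde{F}=\emptyset$ and $\Sigma(gz)\cap\Sigma(gw)=\emptyset$ come out because the $g$- and $\Sigma$-images of a $\mathcal{P}_f$-vertex stay in $\mathcal{P}_f(V(\Gr))$ and are determined by the images of its elements, together with $gF\cap F=\emptyset$ (as $1\in\Sigma$) and the disjointness of the orbits $\Sigma(gu)$, $u\in F$. The delicate point — the main obstacle — is the non-adjacency $\sigma\cdot(g\cdot z)\nsim g\cdot w$ when $z=u\in F_0$ and $w=U'\in\mathcal{U}$ with $u\in U'$: here $\sigma gu\sim gU'$ would mean $\sigma gu\in gU'$, and the orbit-disjointness condition for $F$ collapses this to $\sigma gu=gu$, i.e. $\sigma$ fixing the vertex $gu$, which the bare conditions for $F$ do not rule out. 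The remedy, which is where the full strength of the notion of a highly core-free subgroup is used, is to choose the auxiliary finite subset of $V(\Gr)$ more carefully — for instance by saturating $F$ under $\Sigma$ and exploiting the genericity of the admissible translates — so as to force $\Sigma$ to act freely on $gF$; once this is secured, the remaining cases of the two non-adjacency conditions for $\widetilde{F}$ reduce to $gF\cap F=\emptyset$, to disjointness of the orbits $\Sigma(gu)$, or to the corresponding conditions for $F$ in $\Gr$, which finishes the proof.
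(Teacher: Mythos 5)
Your handling of items (1)--(6) and the overall architecture of (7) --- split $F=F_1\sqcup F_2$ into its $V(\Gr)$-part and its $\mathcal{P}_f(V(\Gr))$-part, pass to the auxiliary set $F_1\cup\bigcup_{U\in F_2}U\subset V(\Gr)$, apply the hypothesis there, and check the four conditions by a case analysis on vertex types --- is exactly the paper's proof (which writes out only (7) and leaves (1)--(6) as routine); your treatment of the orbit-disjointness condition for two $\mathcal{P}_f$-vertices is also the paper's argument.

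The ``delicate point'' you isolate is genuinely the crux, and you have located it correctly: for $u\in F_1$ and $U\in F_2$ with $u\in U$, the condition $\sigma gu\nsim gU$ means $\sigma gu\notin gU$, and after orbit-disjointness eliminates $\sigma gu=gv$ for $v\in U\setminus\{u\}$ one still needs $\sigma gu\neq gu$, i.e.\ that $\Sigma$ acts freely at $gu$ --- which none of the four conditions in the definition supplies. (The paper's own proof slides over this exact step: ``which implies that $u=v$; it follows that $\sigma=1$.'') However, your proposed remedy --- saturate $F$ under $\Sigma$ and choose $g$ more generically so that $\Sigma$ acts freely on $gF$ --- cannot be carried out in general, because no choice of $g$ can help: take $\Gamma=\mathbb{Z}/2\times\mathbb{Z}$ acting on $V(\Gr)=\mathbb{Z}$ with $E(\Gr)=\emptyset$ through the second factor, and $\Sigma=\mathbb{Z}/2\times\{0\}$. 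Then $\Sigma$ acts trivially and every condition in the definition holds for a large translation, so $\Sigma$ is highly core-free w.r.t.\ $\Gamma\curvearrowright\Gr$; yet for the finite set $\{0,\{0\}\}\subset V(\widetilde{\Gr})$ and \emph{any} $g$, the nontrivial $\sigma$ satisfies $\sigma\cdot(g\cdot 0)=g\cdot 0\in g\cdot\{0\}$, hence $\sigma\cdot(g\cdot 0)\sim g\cdot\{0\}$, and the condition fails in $\widetilde{\Gr}$. So the forward implication of (7) is false as literally stated; the correct statement (and the one the paper actually uses, since in every application $\Sigma\curvearrowright\Gr$ is free or $\Sigma$ is trivial) requires the additional hypothesis that $g$ can be chosen with $\Sigma$ acting freely on $gF$ --- for instance whenever $\Sigma\curvearrowright\Gr$ is free. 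With that hypothesis added, your case analysis closes and the rest of your argument is complete and correct.
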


\begin{proof} 
$(1)$ and $(2)$ are obvious.

\vspace{0.2cm}

\noindent$(3)$. Suppose that  $\Gamma\curvearrowright\Gr$ has infinite orbits. Since $\iota$ is equivariant, it suffices to check that the orbit of any $U\in P_f(V(\Gr))$ is infinite. Suppose that there is $U\in P_f(V(\Gr))$ with a finite orbit. Then there exists $g_1,\dots,g_n\in\Gamma$ such that for all $g\in\Gamma$ there exists $i\in\{1,\dots,n\}$ such that $gU=g_iU$. Hence $\cup_{g\in\Gamma}gU$ is a finite set since $\cup_{g\in\Gamma}gU\subset\cup_{i=1}^n g_iU$. However, since $U\neq\emptyset$, there exists $x\in U$ and $\Gamma x\subset\cup_{g\in\Gamma}gU$ is infinite, a contradiction. The converse is obvious.
\vspace{0.2cm}

\noindent$(4)$. Suppose that $\Gamma\curvearrowright\Gr$ disconnects the finite sets. Let $F\subset V(\widetilde{\Gr})$ be a finite set and write $F=F_1\sqcup F_2$, where $F_1=F\cap V(\Gr)$ and $F_2=F\cap\mathcal{P}_f(V(\Gr))$. Define the finite subset $\widetilde{F}=F_1\cup(\cup_{U\in F_2}U)\subset V(\Gr)$. Since $\Gamma\curvearrowright\Gr$ disconnects the finite sets there exists $g\in\Gamma$ such that $g\widetilde{F}\cap\widetilde{F}=\emptyset$ and, for all $u,v\in\widetilde{F}$, $u\nsim gv$. It follows $gF_1\cap F_1=\emptyset$ and for all $u,v\in F_1$, $gu\nsim v$. Moreover, for all $U,V\in F_2$, $gU\cap V=\emptyset$ hence $gU\neq V$ so $gF_2\cap F_2=\emptyset$. Obvisouly $g F_1\cap F_2=\emptyset$ and, since $gF_1\cap U=\emptyset$ for all $U\in F_2$ we find that $gu\nsim U$ for all $u\in F_1$ and all $U\in F_2$. Hence $gF\cap F=\emptyset$ and for all $u,v\in F$, $gu\nsim v$. This shows that $\Gamma\curvearrowright\widetilde{\Gr}$ disconnects the finite sets. The converse is obvious.
\vspace{0.2cm}

\noindent$(5)$. Suppose $\Gamma\curvearrowright\Gr$ is free and $\Gamma\curvearrowright\widetilde{\Gr}$ is not free. Then, there exists $U\in P_f(V(\Gr))$ and $g\in\Gamma-\{1\}$ such that $gU=U$. Then $g^nU=U$ for all $n\geq 0$. Since $U\neq\emptyset$ there exists $x\in U$ and the set $\{g^nx\,:\,n\geq 0\}$ is finite since it is a subset of $U$. Hence, there exists $n\geq 1$ such that $g^nx=x$. Since $\Gamma\curvearrowright\Gr$ is free we have $g^n=1$ and $\Gamma$ is not torsion free. Suppose now that $\Gamma$ has torsion. Let $g\in\Gamma$, $g\neq 1$, such that $\langle g\rangle$ is a finite subgroup of $\Gamma$. Let $u\in V(\Gr)$. The set $F=\langle g\rangle u\subset V(\Gr)$ is finite and $gF=F$ hence $\Gamma\curvearrowright\widetilde{\Gr}$ is not free.

\vspace{0.2cm}
\noindent$(6)$. Let $F\subset V(\widetilde{\Gr})$ be a finite subset and $g\in\Gamma$ such that $g(x)=x$ for all $x\notin F$. Define $\widetilde{F}=F\cap V(\Gr)$, which is a finite subset of $V(\Gr)$. Since $V(\Gr)-\widetilde{F}\subset V(\widetilde{\Gr})-F$ we have $g(x)=x$ for all $x\in V(\Gr)-\widetilde{F}$. Since  $\Gamma\curvearrowright\Gr$ is strongly faithful it follows that $g=1$.

\vspace{0.2cm}
\noindent$(7)$. Suppose that $\Sigma $ is highly core-free w.r.t. $\Gamma\curvearrowright\Gr$. Let $F\subset V(\widetilde{\Gr})$ be a finite set and write $F=F_1\sqcup F_2$, where $F_1=F\cap V(\Gr)$ and $F_2=F\cap\mathcal{P}_f(V(\Gr))$. Define the finite subset $\widetilde{F}=F_1\cup(\cup_{U\in F_2}U)\subset V(\Gr)$. Since $\Sigma$ is highly core-free w.r.t. $\Gamma\curvearrowright\Gr$ there exists $g\in\Gamma$ such that $g\widetilde{F}\cap\Sigma\widetilde{F}=\emptyset$,  $u\nsim gv$ for all $u,v\in\widetilde{F}$, $\Sigma gu\cap\Sigma gv=\emptyset$ for all $u,v\in\widetilde{F}$ with $u\neq v$ and $\sigma g u\nsim gv$ for all $u,v\in\widetilde{F}$ and all $\sigma\in\Sigma\setminus \{1\}$. Obviously, $gF\cap \Sigma F=\emptyset$ and for all $u,v\in F$, $gu\nsim v$. Let us show that $\Sigma gu\cap\Sigma gv=\emptyset$ for all $u,v\in F$ with $u\neq v$. If both $u,v$ are in $F_1$ or if ($u\in F_1$ and $v\in F_2$) or ($v\in F_2$ and $u\in F_1$) this is obvious. So assume that $U,V\in F_2$. We need to show that $\Sigma gU=\Sigma gV\implies U=V$. We know that for all $u\in U$ and all $v\in V$ we have $u\neq v\implies\Sigma gu\cap\Sigma gv=\emptyset$. Hence, if there exists $x\in\Sigma gU\cap\Sigma gV$ then we find $\sigma,\sigma'\in\Sigma$ such that $\sigma gU=\sigma'gV$. It follows that if $u\in U$ then there exists $v\in V$ such that $\sigma gu=\sigma' gv$ hence, $\Sigma gu=\Sigma gv$ so we must have $u=v\in V$. Hence, $U\subset V$. Similarly, $V\subset U$ so that $U=V$. It remains to show that $\sigma gu\nsim gv$ for all $u,v\in F$ and $\sigma\in\Sigma\setminus \{1\}$. For $\sigma\in\Sigma\setminus \{1\}$ it is clear that $\sigma gu\nsim gv$ for all $u,v\in F_2$ and also for all $u,v\in F_1$. Let $u\in F_1$ and $V\in F_2$. If $\sigma\in\Sigma$ is such that $\sigma gu\sim gV$ then there exists $v\in V\subset \widetilde{F}$ such that $\sigma gu=gv$. Hence $\Sigma gu=\Sigma gv$ which implies that $u=v$. It follows that $\sigma=1$. The converse is obvious.
\end{proof}

\subsection{Inductive limits}

\noindent Let $X_n$ be a sequence of countable sets with injective maps $\iota_n\,:\, X_n\rightarrow X_{n+1}$. Let $l\geq n$. Define the injective map $\iota_{l,n}\,:\, X_n\rightarrow X_l$ by $\iota_{l,n}=\iota_{l-1}\circ\dots\circ\iota_n$ if $l>n$, and $\iota_{n,n}=\id$. Observe that for all $n\leq m\leq l$, $\iota_{l,m}\circ\iota_{m,n}=\iota_{l,n}$. Let $X_\infty$ be the \textit{inductive limit} i.e. $X_\infty=\sqcup X_n/\sim$, where $x\sim y$ if and only if there exists $n,m\in\N$ and $l\geq n,m$ such that $x\in X_n$, $y\in X_m$ and $\iota_{l,n}(x)=\iota_{l,m}(y)$. Observe that we have injections $\iota_{\infty,n}\,:\, X_n\rightarrow X_\infty$, $\iota_{\infty,n}(x)=[x]$, where $[x]$ denotes the class of the element $x\in X_n\subset\sqcup_n X_n$ for the equivalence relation described above. Those injections satisfy $X_\infty=\cup^\uparrow_n\iota_{\infty,n}(X_n)$ and $\iota_{\infty,n+1}\circ\iota_n=\iota_{\infty,n}$ for all $n\in\N$. Actually $X_\infty$ is the unique set, up to a canonical bijection, such that there exists injections $\iota_{\infty,n}\,:\, X_n\rightarrow X_\infty$ satisfying $X_\infty=\cup^\uparrow_n\iota_{\infty,n}(X_n)$ and $\iota_{\infty,n+1}\circ\iota_n=\iota_n$ for all $n\in\N$. Note that $X_\infty$ is at most countable. Actually, it is finite if and only if for all $n\in\N$, $X_n$ is finite and there exists $l\in\N$ such that $\iota_k(X_k)=X_{k+1}$ for all $k\geq l$. 

\vspace{0.2cm}

\noindent Given a sequence of actions $\pi_n\,:\,\Gamma\rightarrow S(X_n)$ of a group $\Gamma$ on the set $X_n$ satisfying $\pi_{n+1}(g)\circ\iota_n=\iota_n\circ\pi_n(g)$ for all $n\in\N$ and $g\in\Gamma$, we define the \textit{inductive limit action} $\pi_\infty\,:\,\Gamma\rightarrow S(X_\infty)$ by $\pi_\infty(g)\circ\iota_{\infty,n}=\iota_{\infty,n}\circ\pi_n(g)$ for all $n\in\N$. The next proposition contains some standard observations on inductive limits. Since this results are well known and very easy to check, we omit the proof.

\begin{proposition}\label{PropLim}
The following holds.
\begin{enumerate}
\item If there exists $n\in\N$ such that $\Gamma\curvearrowright X_n$ is faithful then $\Gamma\curvearrowright X_\infty$ is faithful.
\item $\Gamma\curvearrowright X_\infty$ is free if and only if $\Gamma\curvearrowright X_n$ is free for all $n\in\N$.
\item If $\Gamma\curvearrowright X_n$ is strongly faithful for all $n\in\N$ then $\Gamma\curvearrowright X_\infty$ is strongly faithful.
\item $\Gamma\curvearrowright X_\infty$ has infinite orbits if and only if $\Gamma\curvearrowright X_n$ has infinite orbits for all $n\in\N$.
\end{enumerate}
\end{proposition}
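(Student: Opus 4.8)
The plan is to reduce all four items to a single structural observation and then dispatch each one in a line or two. The observation is that, for every $n$, the canonical map $\iota_{\infty,n}\colon X_n\to X_\infty$ is injective and $\Gamma$-equivariant for $\pi_n$ and $\pi_\infty$, and that $X_\infty=\cup^\uparrow_n\iota_{\infty,n}(X_n)$ is an \emph{increasing} union. From equivariance and injectivity I get, for every $y\in X_n$ and $g\in\Gamma$, the identity $\pi_\infty(g)\iota_{\infty,n}(y)=\iota_{\infty,n}(\pi_n(g)y)$; hence $\pi_\infty(g)$ fixes $\iota_{\infty,n}(y)$ if and only if $\pi_n(g)$ fixes $y$, and the orbit $\Gamma\cdot\iota_{\infty,n}(y)$ in $X_\infty$ equals $\iota_{\infty,n}(\Gamma\cdot y)$, so it has the same cardinality as $\Gamma\cdot y$. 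From the union being increasing I get that every finite subset of $X_\infty$ lies in some $\iota_{\infty,m}(X_m)$, and of course every point of $X_\infty$ lies in some $\iota_{\infty,n}(X_n)$.

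Granting this, (1) is immediate: if $\pi_n$ is faithful and $\pi_\infty(g)=\id$, then $\pi_n(g)$ fixes every $y\in X_n$, so $g=1$. Statement (2) is the two obvious implications of ``$\pi_\infty(g)$ fixes $\iota_{\infty,n}(y)$ iff $\pi_n(g)$ fixes $y$'': freeness of every $\pi_n$ propagates to $\pi_\infty$ because any $x\in X_\infty$ is of the form $\iota_{\infty,n}(y)$, and conversely freeness of $\pi_\infty$ restricts back to each $X_n$. Statement (4) is the analogous dichotomy for orbit cardinalities, again using that every $x\in X_\infty$ is some $\iota_{\infty,n}(y)$ and that $\iota_{\infty,n}$ puts the orbit of $y$ in bijection with the orbit of $x$.

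The one step needing a little care is (3). Suppose every $\pi_n$ is strongly faithful, let $F\subset X_\infty$ be finite with $X_\infty\setminus F\neq\emptyset$, and let $g\in\Gamma$ fix every point outside $F$; I want $g=1$. I would fix a point $x_0\in X_\infty\setminus F$ and choose $m$ large enough that $F\cup\{x_0\}\subset\iota_{\infty,m}(X_m)$ (possible since the union is increasing), and set $F_m=\iota_{\infty,m}^{-1}(F)$. This is a finite subset of $X_m$, and $\iota_{\infty,m}^{-1}(x_0)\in X_m\setminus F_m$, so $X_m\setminus F_m\neq\emptyset$. For any $y\in X_m\setminus F_m$ one has $\iota_{\infty,m}(y)\notin F$, hence $\iota_{\infty,m}(\pi_m(g)y)=\pi_\infty(g)\iota_{\infty,m}(y)=\iota_{\infty,m}(y)$, so $\pi_m(g)y=y$; thus $\pi_m(g)$ fixes everything outside the finite set $F_m$ while $X_m\setminus F_m\neq\emptyset$, and strong faithfulness of $\pi_m$ forces $g=1$. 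The only genuine subtlety, then, is enlarging the index from ``where $F$ lives'' to some $m$ that also sees a point outside $F$, which is precisely what keeps the exceptional set's complement nonempty in $X_m$; nothing else in the proposition poses a real obstacle, which is why the authors are content to leave the verification to the reader.
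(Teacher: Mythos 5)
Your proof is correct, and it is exactly the routine verification the authors have in mind (the paper omits the proof of this proposition as "well known and very easy to check"): everything reduces to the injectivity and $\Gamma$-equivariance of the maps $\iota_{\infty,n}$ together with the fact that $X_\infty$ is the increasing union of their images. Your handling of (3) -- enlarging $m$ so that $\iota_{\infty,m}(X_m)$ contains both $F$ and a point of $X_\infty\setminus F$, which guarantees $X_m\setminus F_m\neq\emptyset$ -- is precisely the one point that needs care, and you treat it correctly.
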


\noindent Let $\Gr_n$ be a sequence of countable graphs with injective graphs homomorphisms $\iota_n\,:\, \Gr_n\rightarrow \Gr_{n+1}$. We define the \textit{inductive limit graph} $\Gr_\infty$ by defining $V(\Gr_\infty)=\cup^\uparrow\iota_{\infty,n}(V(\Gr_n))$ as the inductive limit of the $V(\Gr_n)$ with respect to the maps $\iota_n$ and $E(\Gr_\infty)$ is the set of couples $(u,v)\in V(\Gr_\infty)^2$ for which there exists $k\geq n,m$ with $u=\iota_{\infty,n}(u_0)$, $u_0\in V(\Gr_n)$, $v=\iota_{\infty,m}(v_0)$, $v_0\in V(\Gr_m)$ and $(\iota_{k,n}(u_0),\iota_{k,m}(v_0))\in E(\Gr_k)$. We collect elementary observations on the inductive limit graph in the following proposition.

\begin{proposition}\label{PropGraphLim}
The following holds.
\begin{enumerate}
\item $\iota_{\infty,n}$ is a graph homomorphism for all $n\in\N$.
\item If $\iota_n$ is open for all $n\in\N$ then $\iota_{\infty,n}$ is open for all $n\in\N$.
\item If $\Gamma\curvearrowright\Gr_\infty$ disconnects the finite sets then $\Gamma\curvearrowright\Gr_n$ disconnects the finite sets for all $n\in\N$. The converse holds when $\iota_n$ is open for all $n\in\N$.
\item If $\Gamma\curvearrowright \Gr_\infty$ is non-singular then $\Gamma\curvearrowright \Gr_n$ is non-singular for all $n\in\N$. The converse holds when $\iota_n$ is open for all $n\in\N$.
\item Let $\Sigma<\Gamma$ be a subgroup. If $\Sigma$ is highly core-free w.r.t.  $\Gamma\curvearrowright\Gr_\infty$ then $\Sigma$ is  highly core-free w.r.t. $\Gamma\curvearrowright\Gr_n$ for all $n$. The converse holds when $\iota_n$ is open for all $n$.
\end{enumerate}
\end{proposition}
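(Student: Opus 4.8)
The plan is to reduce all five statements to a handful of structural facts about the canonical injections $\iota_{\infty,n}\colon V(\Gr_n)\to V(\Gr_\infty)$: they are injective, $\Gamma$-equivariant (this is exactly how $\pi_\infty$ is defined), satisfy $V(\Gr_\infty)=\cup^\uparrow_n\iota_{\infty,n}(V(\Gr_n))$, are always graph homomorphisms, and are open whenever every $\iota_n$ is. Statements $(1)$ and $(2)$ assert precisely these last two properties, so I would establish them first; then $(3)$, $(4)$ and $(5)$ all follow by the same scheme, transporting a witnessing group element $g$ between $\Gr_n$ and $\Gr_\infty$.

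For $(1)$, it suffices to take the common index $k$ in the definition of $E(\Gr_\infty)$ equal to $n$: this shows immediately that $\iota_{\infty,n}$ carries an edge of $\Gr_n$ to an edge of $\Gr_\infty$. For $(2)$, assume every $\iota_l$ is open; then each $\iota_{l,n}$ is open, being a composition of open graph homomorphisms. The only point requiring care is that the relation defining $E(\Gr_\infty)$ does not depend on the chosen representatives of its two endpoints: if $u=\iota_{\infty,n}(u_0)=\iota_{\infty,n'}(u_0')$ and likewise for $v$, and the edge condition holds at some common level $k$, then pushing everything to a large enough level $K$ — using the cocycle identity $\iota_{l,m}\circ\iota_{m,n}=\iota_{l,n}$ together with the fact that each $\iota_l$ is a graph homomorphism, so edges persist under the connecting maps — shows the edge condition also holds for the other representatives. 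Granting this, if $(\iota_{\infty,n}(u_0),\iota_{\infty,n}(v_0))\in E(\Gr_\infty)$ then there is $k\geq n$ with $(\iota_{k,n}(u_0),\iota_{k,n}(v_0))\in E(\Gr_k)$, and openness of $\iota_{k,n}$ forces $(u_0,v_0)\in E(\Gr_n)$; with $(1)$, this gives openness of $\iota_{\infty,n}$.

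For $(3)$, $(4)$ and $(5)$ I would use two observations throughout. First, since $\iota_{\infty,n}$ is an equivariant injection, for a finite $F_0\subset V(\Gr_n)$ with image $F=\iota_{\infty,n}(F_0)$ one has $\iota_{\infty,n}(gF_0)=gF$ and $\iota_{\infty,n}(\Sigma F_0)=\Sigma F$, so every \emph{disjointness} condition appearing in ``disconnects the finite sets'' and in ``highly core-free'' holds in $\Gr_n$ iff it holds in $\Gr_\infty$. Second, for $x,y\in V(\Gr_n)$, $\iota_{\infty,n}(x)\nsim\iota_{\infty,n}(y)$ always implies $x\nsim y$, and the reverse implication holds precisely when $\iota_{\infty,n}$ is open, i.e.\ by $(2)$ when all $\iota_l$ are open — this is what controls the \emph{non-adjacency} conditions. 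The forward directions of $(3)$, $(4)$, $(5)$ then go: given a finite $F_0\subset V(\Gr_n)$, apply the $\Gr_\infty$-hypothesis to $F=\iota_{\infty,n}(F_0)$ to obtain $g\in\Gamma$, and pull the resulting conditions back through $\iota_{\infty,n}$ (only the graph-homomorphism direction is used here, so no openness assumption is needed). The converse directions go: given a finite $F\subset V(\Gr_\infty)$, use $V(\Gr_\infty)=\cup^\uparrow_n\iota_{\infty,n}(V(\Gr_n))$ to write $F=\iota_{\infty,n}(F_0)$ for some $n$ and finite $F_0\subset V(\Gr_n)$, apply the $\Gr_n$-hypothesis to $F_0$, and push the conditions forward through $\iota_{\infty,n}$ — which this time uses openness of $\iota_{\infty,n}$, hence the hypothesis that every $\iota_l$ is open. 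For $(4)$ it is cleanest to argue by contraposition, a singular configuration at one level producing one at the other.

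The only genuine obstacle is the representative-independence bookkeeping in $(2)$; everything else is a direct diagram chase, in keeping with the paper's description of these as elementary observations. I would state that bookkeeping as a one-line remark right after the definition of $E(\Gr_\infty)$, so that it can be invoked in $(2)$ and then left implicit elsewhere.
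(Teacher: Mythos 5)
Your proposal is correct and follows essentially the same route as the paper: the paper writes out only item $(5)$, using exactly your scheme of pulling the witnessing $g$ back through $\iota_{\infty,n}$ (injectivity and equivariance for the disjointness conditions, the homomorphism property for non-adjacency) and pushing it forward via openness for the converse. Items $(1)$--$(4)$ are declared routine and omitted in the paper, and your arguments for them --- including the representative-independence bookkeeping for $(2)$ --- are the intended ones.
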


\begin{proof}
$(1)$ is obvious.
\vspace{0.2cm}

\noindent$(2)$. Suppose that $\iota_n$ is open for all $n\in\N$ and let $u,v\in V(\Gr_n)$ such that $\iota_{\infty,n}(u)\sim\iota_{\infty,n}(v)$. Then, there exists $k\geq 0$ such that $(\iota_{k+n,n}(u),\iota_{k+n,n}(v))\in E(\Gr_{k+n})$ and a proof by induction on $k\geq 0$ shows that $(u,v)\in E(\Gr_n)$.

\vspace{0.2cm}

\noindent$(3)$. Suppose that $\Gamma\curvearrowright\Gr_\infty$ disconnects the finite sets and let $n\in\N$ and $F\subset V(\Gr_n)$ be a finite subset. Let $F'=\iota_{\infty,n}(F)\subset V(\Gr_\infty)$ and take $g\in\Gamma$ such that $gF'\cap F'=\emptyset$ and $gu'\nsim v'$ for all $u',v'\in F'$. It follows that $gF\cap F=\emptyset$ and $\iota_{\infty,n}(gu)\nsim\iota_{\infty,n}(v)$ for all $u,v\in F$. By $(1)$ we have $gu\nsim v$ for all $u,v\in F$. Suppose now that $\iota_n$ is open and $\Gamma\curvearrowright\Gr_n$ disconnects the finite sets for all $n\in\N$. Let $F\subset V(\Gr_\infty)$ be a finite set and take $n\in\N$ large enough so that $F=\iota_{\infty,n}(F')$, where $F'\subset V(\Gr_n)$ is a finite set. Take $g\in\Gamma$ such that $gF'\cap F'=\emptyset$ and $gu'\nsim v'$ for all $u',v'\in F'$. Since $\iota_{\infty,n}$ injective we have $gF\cap F=\emptyset$ and since $\iota_{\infty,n}$ is open (by $(2)$) we have $gu\nsim v$ for all $u,v\in F$.
\vspace{0.2cm}

\noindent$(4)$. Suppose that $\Gamma\curvearrowright \Gr_\infty$ is non-singular and let $n\in\N$, $u\in V(\Gr_n)$ and $g\in\Gamma$. Then $\iota_{\infty,n}(gu)=g\iota_{\infty,n}(u)\nsim\iota_{\infty,n}(u)$ which implies that $gu\nsim u$ since $\iota_{\infty,n}$ is a graph homomorphism. Suppose now that $\iota_n$ is open and $\Gamma\curvearrowright \Gr_n$ is non-singular for all $n\in\N$. Let $u\in V(\Gr_\infty)$ and $g\in\Gamma$. Let $n\in\N$ large enough so that $u=\iota_{\infty,n}(u')$, where $u'\in V(\Gr_n)$. Then $gu'\nsim u'$ implies that $gu=\iota_{\infty,n}(gu')\nsim\iota_{\infty,n}(u')=u$ since $\iota_{\infty,n}$ is open.
\vspace{0.2cm}

\noindent$(5)$.
Suppose that $\Sigma$ is highly core-free with respect to $\Gamma\curvearrowright\Gr_\infty$  and let $n\in\N$ and $F\subset V(\Gr_n)$ be a finite subset. Let $F'=\iota_{\infty,n}(F)\subset V(\Gr_\infty)$ and take $g\in\Gamma$ such that $gF'\cap\Sigma F'=\emptyset$, $gu'\nsim v'$ for all $u',v'\in F'$, $\Sigma gu'\cap\Sigma gv'=\emptyset$ for all $u',v'\in F'$ with $u'\neq v'$ and $\sigma gu'\nsim gv'$ for all $u',v'\in F'$ and all $\sigma\in\Sigma\setminus \{1\}$. It follows that $gF\cap\Sigma F=\emptyset$ and $\Sigma gu\cap\Sigma gv=\emptyset$ for all $u,v\in F$ with $u\neq v$. Moreover, $\iota_{\infty,n}(gu)\nsim\iota_{\infty,n}(v)$ and $\iota_{\infty,n}(\sigma gu)\nsim\iota_{\infty,n}(gv)$ for all $u,v\in F$ and all $\sigma\in\Sigma\setminus \{1\}$. By $(1)$ we have $gu\nsim v$ and $\sigma gu\nsim gv$ for all $u,v\in F$ and all $\sigma\in\Sigma\setminus \{1\}$. Suppose now that $\iota_n$ is open and $\Sigma$ is highly core-free w.r.t. $\Gamma\curvearrowright\Gr_n$ for all $n\in\N$. Let $F\subset V(\Gr_\infty)$ be a finite set and take $n\in\N$ large enough so that $F=\iota_{\infty,n}(F')$, where $F'\subset V(\Gr_n)$ is a finite set. Take $g\in\Gamma$ such that $gF'\cap\Sigma F'=\emptyset$, $gu'\nsim v'$ and $\sigma gu'\nsim gv'$ for all $u',v'\in F'$ and all $\sigma\in\Sigma\setminus \{1\}$ and $\Sigma gu'\cap\Sigma gv'=\emptyset$ for all $u',v'\in F'$ with $u'\neq v'$. Since $\iota_{\infty,n}$ injective we have $gF\cap\Sigma F=\emptyset$ and $\Sigma gu\cap\Sigma gv=\emptyset$ for all $u,v\in F$ with $u\neq v$. Moreover, since $\iota_{\infty,n}$ is open (by $(2)$) we have $gu\nsim v$ and $\sigma gu\nsim gv$ for all $u,v\in F$ and all $\sigma\in\Sigma\setminus \{1\}$.
\end{proof}
\section{The Random Graph}

\subsection{Definition of the Random Graph}Given a graph $\Gr$ and subsets $U,V\subset V(\Gr)$ we define $\Gr_{U,V}$ as the induced subgraph on the subsets of vertices
$$V(\Gr_{U,V}):=\{z\in V(\Gr)\setminus (U\cup V)\,:\, z\sim u\, , \forall u\in U \text{ and }z\nsim v\, ,\forall v\in V\}.$$
Note that $V(\Gr_{U,V})$ may be empty for some subsets $U$ and $V$. To ease the notations we will denote by the same symbol $\Gr_{U,V}$ the induced graph on $V(\Gr_{U,V})$ and the set of vertices $V(\Gr_{U,V})$. 

\begin{definition}
We say that a graph $\Gr$ has \textit{property $(R)$} if, for any disjoint finite subsets $U,V\subset V(\Gr)$, $\Gr_{U,V}\neq\emptyset$.
\end{definition}

\noindent Note that a graph with property $(R)$ is necessarily infinite. We recall the following well-known result (See e.g. \cite{Cam97} or \cite{Cam99}) that will be generalized later (Proposition \ref{PropExtension}).

\begin{proposition}\label{Prop-Extension}
Let $\Gr_1$, $\Gr_2$ be two infinite countable graphs with property $(R)$ and $A\subset V(\Gr_1)$, $B\subset V(\Gr_2)$ be finite subsets. Any isomorphism between the induced graphs $\varphi\,:\,(\Gr_1)_A\rightarrow (\Gr_2)_B$ extends to an isomorphism $\overline{\varphi}\,:\,\Gr_1\rightarrow\Gr_2$.
\end{proposition}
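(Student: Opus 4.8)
The plan is to run the classical \textquotedblleft back-and-forth\textquotedblright\ construction, obtaining $\overline{\varphi}$ as an increasing union of finite partial isomorphisms. First I would fix enumerations $V(\Gr_1)=\{a_n : n\geq 1\}$ and $V(\Gr_2)=\{b_n : n\geq 1\}$, which is possible since both graphs are countable, and set $\varphi_0=\varphi$, a partial isomorphism with finite domain $A$ and finite range $B$. I would then build a chain $\varphi_0\subset\varphi_1\subset\varphi_2\subset\cdots$ of finite partial isomorphisms between $\Gr_1$ and $\Gr_2$ such that $a_n\in d(\varphi_{2n-1})$ and $b_n\in r(\varphi_{2n})$ for every $n$, alternating a \textquotedblleft forth\textquotedblright\ step (adding a prescribed point to the domain) with a \textquotedblleft back\textquotedblright\ step (adding a prescribed point to the range).

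The heart of the argument is the one-point extension. Suppose $\psi$ is a partial isomorphism with finite domain $D\subset V(\Gr_1)$ and range $\psi(D)\subset V(\Gr_2)$, and let $a\in V(\Gr_1)\setminus D$. Put $U=\{u\in D : u\sim a\}$ and $W=\{u\in D : u\nsim a\}$, so that $D=U\sqcup W$ and $\psi(U),\psi(W)$ are disjoint finite subsets of $V(\Gr_2)$. By property $(R)$ for $\Gr_2$ the set $(\Gr_2)_{\psi(U),\psi(W)}$ is non-empty; choose $b$ in it. Since $\psi(U)\cup\psi(W)=\psi(D)$, the vertex $b$ lies outside $\psi(D)$, so $\psi\cup\{(a,b)\}$ is a bijection from $D\cup\{a\}$ onto $\psi(D)\cup\{b\}$; it is a partial isomorphism because adjacencies and non-adjacencies inside $D$ are preserved (as $\psi$ is one), while for $u\in U$ we have $a\sim u$ and $b\sim\psi(u)$, and for $u\in W$ we have $a\nsim u$ and $b\nsim\psi(u)$, by the choice of $b$. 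The symmetric statement — extending the range by any prescribed vertex not yet hit — follows by applying this to $\psi^{-1}$ using property $(R)$ for $\Gr_1$. Using this at the forth steps to capture $a_n$ (doing nothing if $a_n$ is already in the domain) and at the back steps to capture $b_n$ yields the desired chain.

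Finally I would set $\overline{\varphi}=\bigcup_n\varphi_n$. Its domain is all of $V(\Gr_1)$ and its range all of $V(\Gr_2)$ by construction, and it is injective because the $\varphi_n$ are compatible injections, so $\overline{\varphi}$ is a bijection $V(\Gr_1)\to V(\Gr_2)$ extending $\varphi$. It is a graph isomorphism because adjacency is a finitary condition: for $x,y\in V(\Gr_1)$, both lie in $d(\varphi_N)$ for $N$ large enough, and then $x\sim y\Leftrightarrow\varphi_N(x)\sim\varphi_N(y)\Leftrightarrow\overline{\varphi}(x)\sim\overline{\varphi}(y)$, so $\overline{\varphi}$ is open and bijective, hence an isomorphism of graphs. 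I do not expect a genuine obstacle here; the only point demanding care is ensuring that each one-point extension preserves non-adjacency as well as adjacency, so that we obtain isomorphisms of induced subgraphs rather than mere morphisms — and this is precisely what property $(R)$, which supplies a vertex adjacent to all of $U$ and to none of $W$, is designed to guarantee.
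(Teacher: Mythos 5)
Your proof is correct: it is the standard back-and-forth construction, with the one-point extension justified exactly by property $(R)$, and all the details (disjointness of $U$ and $W$, the new vertex landing outside the old range, preservation of both adjacency and non-adjacency, and the final union being a bijective open morphism) are handled properly. The paper itself omits a proof of this proposition, citing it as well known, but its equivariant generalization (Proposition \ref{PropExtension}) is proved by precisely this back-and-forth device, so your argument is essentially the paper's method specialized to the case without the group symmetry.
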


\noindent There are many ways to construct a countable graph with property $(R)$. Proposition \ref{Prop-Extension} shows that a countable graph with property $(R)$ is unique, up to isomorphism. Such a graph is denoted by $\mathcal{R}$ and called \textit{the Random Graph}. Given an infinite countable set $V$ of vertices, Erd\H{o}s and R\'{e}nyi proved \cite{ER63} that putting (independently) an edge between any pair $\{u,v\}$ of vertices with probability $1/2$, the resulting graph will have property ($R$) with probability $1$. This result motivates the name \textit{Random Graph}.

\vspace{0.2cm}

\noindent Proposition \ref{Prop-Extension} also implies that every $\varphi\in P(\mathcal{R})$ admits an extension $\overline{\varphi}\in\autR$ (i.e. $\overline{\varphi}\vert_{d(\varphi)}=\varphi$). Proposition \ref{Prop-Extension} is also useful to show stability properties of the graph $\GR$ as done in the next Proposition.

\begin{proposition}\label{PropStability}The following holds.
\begin{enumerate}
\item For every finite subset $A\subset V(\mathcal{R})$, the induced subgraph $\GR_{V(\mathcal{R})\setminus A}$ on $V(\mathcal{R})\setminus A$ is isomorphic to $\mathcal{R}$.
\item For all finite and disjoint $U,V\subset V(\mathcal{R})$, the graph $\mathcal{R}_{U,V}$ is isomorphic to $\mathcal{R}$.
\item For every non-empty countable graph $\Gr$, the inductive limit $\Gr_\infty$ of the sequence $\Gr_0=\Gr$ and $\Gr_{n+1}=\widetilde{\Gr}_n$ (with $\iota_n\,:\,\Gr_n\rightarrow\Gr_{n+1}$ the inclusion) is isomorphic to $\GR$. In particular, every countable graph is isomorphic to an induced subgraph of $\GR$.
\end{enumerate}
\end{proposition}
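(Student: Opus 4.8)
The plan is to deduce all three statements from the uniqueness of the Random Graph: by Proposition~\ref{Prop-Extension} it suffices, in each case, to check that the graph under consideration is countable and has property~$(R)$, after which it must be isomorphic to $\GR$. Countability is immediate everywhere (the vertex sets are explicit countable unions of countable sets), so in practice everything reduces to verifying property~$(R)$; note moreover that, by the remark following the definition of property~$(R)$, this verification automatically shows the relevant graphs are infinite.

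For~(1), given disjoint finite $U,V\subset V(\GR)\setminus A$, I would apply property~$(R)$ of $\GR$ to the disjoint finite pair $(U,\,V\cup A)$ (disjoint because $U$ avoids both $V$ and $A$), obtaining a vertex adjacent to every element of $U$ and non-adjacent to every element of $V\cup A$; such a vertex lies in $V(\GR)\setminus A$ and witnesses property~$(R)$ for $\GR_{V(\GR)\setminus A}$. For~(2), given disjoint finite $U',V'\subset V(\GR_{U,V})$, I would apply property~$(R)$ of $\GR$ to the pair $(U\cup U',\,V\cup V')$, which is disjoint because $U\cap V=\emptyset$, $U'\cap V'=\emptyset$, and $U',V'$ both avoid $U\cup V$ by the definition of $\GR_{U,V}$; the resulting vertex avoids all four sets, hence lies in $V(\GR_{U,V})$ and there witnesses property~$(R)$. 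Both arguments are short, the only point of care being the disjointness bookkeeping needed to invoke property~$(R)$.

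For~(3), I would first record that each inclusion $\iota_n\,:\,\Gr_n\recht\widetilde{\Gr}_n=\Gr_{n+1}$ is open (this is part of the definition of the Random Extension), so by Proposition~\ref{PropGraphLim}(2) every $\iota_{\infty,n}$ is open; I may therefore identify $\Gr_n$ with the induced subgraph of $\Gr_\infty$ on $\iota_{\infty,n}(V(\Gr_n))$ and write $V(\Gr_\infty)=\bigcup_n^{\uparrow}V(\Gr_n)$. Given disjoint finite $U,V\subset V(\Gr_\infty)$, pick $n$ with $U\cup V\subset V(\Gr_n)$. If $U\neq\emptyset$, the vertex $U\in\mathcal{P}_f(V(\Gr_n))\subset V(\Gr_{n+1})$ has, by construction of $\widetilde{\Gr}_n$, neighbourhood exactly $U$ in $\Gr_{n+1}$; since $\iota_{\infty,n+1}$ is open, the adjacencies among $\{U\}\cup U\cup V$ are the same in $\Gr_\infty$, so in $\Gr_\infty$ this vertex is adjacent to every element of $U$, non-adjacent to every element of $V$, and distinct from all of them — hence it witnesses property~$(R)$. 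The case $U=\emptyset$, $V\neq\emptyset$ is the one place needing a moment's thought: the obvious witness, a new vertex of $\Gr_{n+1}$ (that is, a non-empty finite subset of $V(\Gr_n)$), can be forced to meet $V$ when $\Gr_n$ happens to be finite with $V(\Gr_n)=V$. I would handle this uniformly by going one level higher: choose $w_0\in V(\Gr_{n+1})\setminus V(\Gr_n)$ (non-empty because the Random Extension strictly enlarges the vertex set) and take the vertex $\{w_0\}\in\mathcal{P}_f(V(\Gr_{n+1}))\subset V(\Gr_{n+2})$, whose neighbourhood $\{w_0\}$ is disjoint from $V$. (The case $U=V=\emptyset$ is trivial, since $\Gr_\infty$ contains the non-empty graph $\Gr_0$.) In every case $(\Gr_\infty)_{U,V}\neq\emptyset$, so $\Gr_\infty$ is countable with property~$(R)$ and hence isomorphic to $\GR$; the final assertion then follows because $\iota_{\infty,0}$ is an open injective graph homomorphism, which identifies $\Gr=\Gr_0$ with the induced subgraph of $\Gr_\infty\cong\GR$ on its image. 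The only genuine obstacle in the whole argument is this empty-$U$ subtlety in~(3); everything else is a direct application of property~$(R)$ and uniqueness.
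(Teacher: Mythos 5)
Your proposal is correct and follows essentially the same route as the paper: in each case one verifies property $(R)$ and invokes the uniqueness of the countable graph with that property. The only (harmless) variations are that in (1) you place $A$ on the non-adjacency side rather than the adjacency side, and in (3) you resolve the empty-$U$ case by passing two levels up instead of choosing $n$ with $V\varsubsetneq V(\Gr_n)$ and taking $z=\{x\}$ for some $x\in V(\Gr_n)\setminus V$; both devices work equally well.
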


\begin{proof}
\noindent $(1).$ Let $U,V\subset V(\GR)\setminus A$ be disjoint finite subsets. Apply property $( R)$ with the disjoint finite subsets $U'=U\sqcup A,V\subset V(\GR)$ and get $z\in V(\GR)\setminus (U'\cup V)=(V(\GR)\setminus A)\setminus (U\cup V)$ such that $z\sim u$ for all $u\in U$ and $z\nsim v$ for all $v\in V$. Hence the countable graph induced on $V(\GR)\setminus A$ has property $( R )$.

\vspace{0.2cm}
\noindent $(2)$. Since $\GR_{U,V}$ is at most countable, it suffices to check that $\GR_{U,V}$ has property $( R)$ and it is left to the reader.

\vspace{0.2cm}
\noindent$(3)$. Since $\Gr$ is either finite and non-empty or infinite countable, the inductive limit is infinite countable and it suffices to check that it has property $( R)$. Since $\iota_n$ is open for all $n\in\N$ it follows from Proposition \ref{PropGraphLim} that we may and will assume that $\iota_{\infty,n}=\id$ and $(\Gr_n)_n$ is an increasing sequence of induced subgraphs of $\Gr_\infty$ such that $\Gr_\infty=\cup^{\uparrow}\Gr_n$. Let $U,V$ be two finite and disjoint subsets of the inductive limit. Let $n$ large enough so that $U,V\varsubsetneq V(\Gr_n)$. If $U\neq\emptyset$, we consider the element $z=U\in P_f(V(\Gr_n))\subset V(\Gr_{n+1})\subset V(\Gr_\infty)$ and if $U=\emptyset$, we consider any element $z=\{x\}\in P_f(V(\Gr_n))\subset V(\Gr_{n+1})\subset V(\Gr_\infty)$ for $x\in V(\Gr_n)\setminus  V$. Then, by definition of the Random extension, we have, for all $u\in U$, $(u, z)\in E(\Gr_{n+1})$ and, for all $v\in V$, $(v, z)\notin E(\Gr_{n+1})$. Since $\Gr_{n+1}$ is the induced subgraph on $V(\Gr_{n+1})$ we have $z\in(\Gr_{\infty})_{U,V}$. The last assertion also follows from Proposition \ref{PropGraphLim} since the inclusion of $\Gr_0=\Gr$ in the inductive limit is open.
\end{proof}

\begin{remark}
The construction of Proposition \ref{PropStability}, assertions $(3)$, shows the existence of $\GR$. This construction may also be performed starting with any countable graph $\Gr$, even $\Gr=\emptyset$, and replacing, in the construction of the Random Extension, $\mathcal{P}_f(X)$ by all the finite subsets of $X$ (even the empty one). The resulting inductive limit is again isomorphic to $\GR$.
\end{remark}

\noindent We shall need the following generalization of Proposition \ref{Prop-Extension}. The proof is done by using the ``back-and-forth" device.

\begin{proposition}\label{PropExtension}
For $k=1,2$, let $\pi_k\,:\,\Sigma\curvearrowright\GR$ be two free and non-singular actions of the finite group $\Sigma$. For any partial isomorphism $\varphi\in P(\GR)$ such that $\pi_1(\Sigma)d(\varphi)=d(\varphi)$ and $\pi_2(\Sigma)r(\varphi)=r(\varphi)$ and $\varphi\pi_1(\sigma)=\pi_2(\sigma)\varphi$ for all $\sigma\in\Sigma$, there exists $\overline{\varphi}\in\autR$ such that $\overline{\varphi}\pi_1(\sigma)=\pi_2(\sigma)\overline{\varphi}$ for all $\sigma\in\Sigma$ and $\overline{\varphi}\vert_{d(\varphi)}=\varphi$.
\end{proposition}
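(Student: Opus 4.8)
The plan is to run the standard back-and-forth argument, but carried out $\Sigma$-equivariantly, enumerating vertices in $\Sigma$-orbits rather than one at a time. First I would fix enumerations $V(\GR)=\{x_1,x_2,\dots\}$ of the domain copy and $V(\GR)=\{y_1,y_2,\dots\}$ of the range copy. We build an increasing sequence of finite partial isomorphisms $\varphi=\varphi_0\subset\varphi_1\subset\varphi_2\subset\cdots$, each satisfying the three hypotheses of the proposition at its own level: $\pi_1(\Sigma)d(\varphi_n)=d(\varphi_n)$, $\pi_2(\Sigma)r(\varphi_n)=r(\varphi_n)$, and $\varphi_n\pi_1(\sigma)=\pi_2(\sigma)\varphi_n$ on $d(\varphi_n)$ for all $\sigma\in\Sigma$. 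Then $\overline{\varphi}:=\bigcup_n\varphi_n$ will be a graph automorphism (both domain and range exhaust $V(\GR)$ by the forth/back steps) commuting with the two $\Sigma$-actions and extending $\varphi$.

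The inductive step is the heart of the matter. At an odd (``forth'') stage, let $x$ be the first vertex of the domain enumeration not yet in $d(\varphi_n)$; we must extend $\varphi_n$ to a partial isomorphism defined on $d(\varphi_n)\cup\pi_1(\Sigma)x$ and still $\Sigma$-equivariant. Here one must distinguish cases according to the stabilizer of $x$ under $\pi_1(\Sigma)$: but since $\pi_1$ is \emph{free}, the orbit $\pi_1(\Sigma)x$ has exactly $|\Sigma|$ elements and we must choose $|\Sigma|$ fresh, distinct target vertices $\{z_\sigma:\sigma\in\Sigma\}$ in the range copy, with $z_\sigma=\pi_2(\sigma)z_1$, such that the map $\pi_1(\sigma)x\mapsto z_\sigma$ together with $\varphi_n$ is an isomorphism of the induced subgraph on $d(\varphi_n)\sqcup\pi_1(\Sigma)x$. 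Writing $F=r(\varphi_n)$, the required adjacency pattern of the single vertex $z_1$ relative to $F$ is prescribed (it must mirror the adjacencies of $x$ to $d(\varphi_n)$ via $\varphi_n$), and additionally $z_1$ must avoid $F$ and avoid the other orbit members, which are themselves determined by $z_1$ once it is chosen. The key point is that the set of ``bad'' constraints coming from $z_1\sim\pi_2(\sigma)z_1$ or $z_1=\pi_2(\sigma)z_1$ is controlled because $\pi_2$ is free and \emph{non-singular}: freeness gives $z_1\neq\pi_2(\sigma)z_1$ automatically for $\sigma\neq1$, and non-singularity is what guarantees we can in fact realize the prescribed adjacency pattern to $F$ simultaneously with the constraint that the orbit $\pi_2(\Sigma)z_1$ be an ``empty'' graph on itself --- precisely the kind of configuration property $(R)$ for $\GR_{U,V}$ (Proposition \ref{PropStability}(2)) lets us hit, since removing the finite set $F\cup\pi_2(\Sigma)$-translates of the already-used vertices still leaves a copy of $\GR$. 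Actually, more carefully: one first picks $z_1$ in the appropriate $\GR_{U,V}$ (with $U,V$ describing the target adjacencies to $F$), then notes that the full $\Sigma$-equivariant extension automatically respects all edges because the edge relation within $d(\varphi_n)$ and between $d(\varphi_n)$ and $x$ is transported by $\varphi_n$, and the edges \emph{inside} the orbit $\pi_1(\Sigma)x$ are forced to be matched by the analogous orbit structure on the range side. The even (``back'') stages are symmetric, interchanging the roles of $\pi_1,\pi_2$ and the two enumerations.

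The main obstacle I anticipate is the bookkeeping in the forth step when $x$ is not $\Sigma$-free over $d(\varphi_n)$ in the weaker sense that some $\pi_1(\sigma)x$ \emph{already lies in} $d(\varphi_n)$, or when $x$ is adjacent to some of its own orbit translates: then the target $z_1$ has extra forced adjacencies (to $\varphi_n(\pi_1(\sigma)x)$ for those $\sigma$ with $\pi_1(\sigma)x\in d(\varphi_n)$, and to $\pi_2(\sigma)z_1$ for those $\sigma$ with $\pi_1(\sigma)x\sim x$), and one has to check these are mutually consistent --- consistency follows from $\varphi_n$ being an isomorphism commuting with $\Sigma$ on the already-built part, together with freeness of $\pi_2$ preventing $z_1$ from being forced to equal one of its own translates. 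Once consistency is in hand, non-singularity of $\pi_2$ and property $(R)$ applied to an appropriate $\GR_{U,V}$ produce the vertex, and the verification that $\varphi_{n+1}$ is genuinely a partial isomorphism is then a routine check of adjacencies, which I would leave to the reader.
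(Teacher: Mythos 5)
Your proposal is correct and follows essentially the same route as the paper: a $\Sigma$-equivariant back-and-forth that extends the partial isomorphism one $\pi_1(\Sigma)$-orbit (resp. $\pi_2(\Sigma)$-orbit) at a time, using property $(R)$ to locate a suitable orbit representative $z_1$ in an appropriate $\GR_{U,V}$, freeness to transport it consistently over the whole orbit, and non-singularity to guarantee that orbits carry no internal edges so the extension remains a graph isomorphism. The ``obstacles'' you flag in the last paragraph are in fact vacuous (since $d(\varphi_n)$ is $\Sigma$-invariant no translate of a new vertex already lies in it, and non-singularity of $\pi_1$ forbids $x\sim\pi_1(\sigma)x$), which is exactly how the paper's verification goes.
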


\begin{proof}
Write $V(\GR)\setminus d(\varphi)=\sqcup_{k=1}^\infty\pi_1(\Sigma)x_k$ and $V(\GR)\setminus r(\varphi)=\sqcup_{k=1}^\infty\pi_2(\Sigma)y_k$. We define inductively pairwise distinct integers $k_n$, pairwise distinct integers $l_n$, subsets 
$$A_n=d(\varphi)\sqcup \pi_1(\Sigma)x_{k_1}\sqcup\ldots\sqcup \pi_1(\Sigma)x_{k_n}\subset V(\GR),$$ $$B_n=r(\varphi)\sqcup\pi_2(\Sigma)y_{l_1}\sqcup\dots\sqcup\pi_2(\Sigma) y_{l_n}\subset V(\GR)$$ and isomorphisms $\varphi_n\,:\,A_n\rightarrow B_n$ such that $\varphi_n\vert_{d(\varphi)}=\varphi$, $\varphi_n\pi_1(\sigma)=\pi_2(\sigma)\varphi_n$ for all $\sigma\in\Sigma$, $A_n\subset A_{n+1}$, $B_n\subset B_{n+1}$, $\varphi_{n+1}\vert_{A_n}=\varphi_n$ and $V(\GR)=\cup A_n=\cup B_n$. Once it is done, the Proposition is proved since we can define $\overline{\varphi}\in\autR$, $x\in A_n\mapsto \varphi_n(x)$ which obviously satisfies $\overline{\varphi}\pi_1(\sigma)=\pi_2(\sigma)\overline{\varphi}$ for all $\sigma\in\Sigma$ and $\overline{\varphi}\vert_{d(\varphi)}=\varphi$.

\vspace{0.2cm}

\noindent Define $A_0=d(\varphi)$, $B_0=r(\varphi)$ and $\varphi_0=\varphi$. If step $n$ is constructed and $n$ is even define $k_{n+1}=\text{Min}\{k\geq 1\,:\,x_k\notin A_n\}$, $U=\{x\in A_n\,:\,x\sim x_{k_{n+1}}\}$, $V=A_n\setminus  U$ and
$$W=\{y\in V(\GR)\setminus  B_n\,:\,y\sim \varphi_n(x)\text{ for all }x\in U\text{ and }y\nsim \varphi_n(x)\text{ for all }x\in V\}.$$
By property $(R )$, $W$ is non-empty. Let $l_{n+1}=\text{Min}\{k\geq 1\,:\,\pi_2(\Sigma)y_k\cap W\neq\emptyset\}$. Replacing $y_{l_{n+1}}$ by an element in $\pi_2(\Sigma)y_{l_{n+1}}$ we may and will assume that $y_{l_{n+1}}\in W$. Define $A_{n+1}=A_n\sqcup\pi_1(\Sigma)x_{k_{n+1}}$ and $B_{n+1}=B_n\sqcup\pi_2(\Sigma)y_{l_{n+1}}$. By freeness, we may define a bijection $\varphi_{n+1}\,:\, A_{n+1}\rightarrow B_{n+1}$by $\varphi_{n+1}\vert_{A_n}=\varphi_n$ and $\varphi_{n+1}(\pi_1(\sigma)x_{k_{n+1}})=\pi_2(\sigma)y_{k_{n+1}}$, for all $\sigma\in\Sigma$. By construction, it is an isomorphism between the induced graphs. Indeed if $x\in A_n$ is such that $x\sim\pi_1(\sigma)x_{k_{n+1}}$ for some $\sigma\in\Sigma$ then $\pi_1(\sigma^{-1})x\sim x_{k_{n+1}}$ so $y_{l_{n+1}}\sim\varphi_n(\pi_1(\sigma^{-1})x)=\pi_2(\sigma^{-1})\varphi_n(x)$ hence, $\varphi_{n+1}(x)=\varphi_n(x)\sim\pi_2(\sigma)y_{l_{n+1}}=\varphi_{n+1}(\pi_1(\sigma)x_{k_{n+1}})$. Since $\varphi_n$ is a graph isomorphism and since both actions are non-singular (which means that there is no edges on induced subgraphs of the form $\pi_k(\Sigma)z$) it shows that $\varphi_{n+1}$ is also a graph isomorphism. By construction we also have that $\varphi_{n+1}\pi_1(\sigma)=\pi_2(\sigma)\varphi_{n+1}$ for all $\sigma\in\Sigma$.
\vspace{0.2cm}

\noindent If $n$ is odd define $l_{n+1}=\text{Min}\{k\geq 1\,:\,y_k\notin B_n\}$, $U=\{y\in B_n\,:\,y\sim y_{l_{n+1}}\}$, $V=B_n\setminus  U$ and
$$W=\{x\in V(\GR)\setminus  A_n\,:\,x\sim  \varphi_n^{-1}(y)\text{ for all }y\in U\text{ and }x\nsim \varphi_n^{-1}(y)\text{ for all }y\in V\}.$$
By property $(R)$, $W$ is non-empty. Let $k_{n+1}=\text{Min}\{k\geq 1\,:\,x_k\in W\}$. We may and will assume that $x_{k_{n+1}}\in W$. Define $A_{n+1}=A_n\sqcup\pi_1(\Sigma)x_{k_{n+1}}$, $B_{n+1}=B_n\sqcup\pi_2(\Sigma)y_{l_{n+1}}$. By construction, the map $\varphi_{n+1}\,:\, A_{n+1}\rightarrow B_{n+1}$ defined by $\varphi_{n+1}\vert_{A_n}=\varphi_n$ and $\varphi_{n+1}(\pi_1(\sigma)x_{k_{n+1}})=\pi_2(\sigma)y_{k_{n+1}}$, $\sigma\in\Sigma$, is an isomorphism between the induced graphs.

\vspace{0.2cm}

\noindent Let us show that $\cup A_n=d(\varphi)\sqcup\bigsqcup_{n=1}^\infty\pi_1(\Sigma)x_{k_n}=V(\GR)$. It suffices to show that $\{k_n\,:\,n\geq 1\}=\N^*$. Suppose that there exists $s\in\N$, $s\neq k_n$ for all $n\geq 1$. Since the elements $k_n$ are pairwise distinct, the set $\{k_{2n+1}\,:\,n\geq 0\}$ is not bounded. Hence, there exists $n\in\N$ such that $s<k_{2n+1}$. By definition, we have $k_{2n+1}=\text{Min}\{k\geq 1\,:\,x_k\notin A_{2n}\}$. However we have $x_s\notin A_{2n}$ and $s<k_{2n+1}$, a contradiction. The proof of $\cup B_n=V(\GR)$ is similar.

\end{proof}

\subsection{Induced action on the Random Graph}

Let $\Gamma\curvearrowright\Gr$ be an action of a group $\Gamma$ on a non-empty countable graph $\Gr$ and consider the sequence of graphs $\Gr_0=\Gr$ and $\Gr_{n+1}=\widetilde{\Gr}_n$ with the associated sequence of actions $\Gamma\curvearrowright\Gr_n$. By Proposition \ref{PropStability}, assertion $(3)$, the inductive limit action defines an action of $\Gamma$ on $\GR$. We call it the \textit{induced action of $\Gamma\curvearrowright\Gr$ on $\GR$}. We will show in Corollary \ref{Cor-ActionR} that many properties on the action $\Gamma\curvearrowright\Gr$ are preserved when passing to the induced action $\Gamma\curvearrowright\GR$. However, freeness is not preserved and one has to consider a weaker notion that we call property $(F )$.

\begin{definition}
We say that an action $\Gamma\curvearrowright\Gc$ has \textit{property $(F)$} if for all finite subsets $S\in\Gamma\setminus \{1\}$ and $F\subset V(\Gc)$, there exists $x\in V(\Gr)\setminus F$ such that $x\nsim u$ for all $u\in F$ and $gx\neq x$ for all $g\in S$.
\end{definition}
\noindent Note that any action with property $(F )$ is faithful and any free action on $\mathcal{R}$ has property $(F )$.

\begin{corollary}\label{Cor-ActionR}
Let $\Gamma\curvearrowright\Gr$ be an action of a group $\Gamma$ on a non-empty countable graph $\Gr$ and write $\Gamma\curvearrowright\GR$ the induced action. The following holds.
\begin{enumerate}
\item If $\Gamma\curvearrowright\Gr$ is faithful then $\Gamma\curvearrowright\GR$ is faithful.
\item $\Gamma\curvearrowright\Gr$ has infinite orbits if and only if $\Gamma\curvearrowright\GR$ has infinite orbits.
\item $\Gamma\curvearrowright\Gr$ disconnects the finite sets if and only if $\Gamma\curvearrowright\GR$ disconnects the finite sets.
\item $\Gamma\curvearrowright\Gr$ is non-singular if and only if $\Gamma\curvearrowright\GR$ is non-singular.
\item Let $\Sigma<\Gamma$ be a subgroup. $\Sigma$ is highly core-free w.r.t. $\Gamma\curvearrowright\Gr$ if and only if $\Sigma$ is highly core-free w.r.t. $\Gamma\curvearrowright\GR$.
\item If $\Gamma\curvearrowright\Gr$ is free, $\Gr$ is infinite and $\Gamma$ is torsion free then $\Gamma\curvearrowright\GR$ is free.
\item If $\Gamma\curvearrowright \mathcal{G}$ is strongly faithful, $\Gr$ is infinite then $\Gamma\curvearrowright\GR$ has property $(F )$.
\end{enumerate}
\end{corollary}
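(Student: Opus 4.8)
The plan is to exploit the fact that, by construction, the induced action $\Gamma\curvearrowright\GR$ is the inductive limit (in the sense of Propositions \ref{PropLim} and \ref{PropGraphLim}) of the sequence of actions $\Gamma\curvearrowright\Gr_n$, where $\Gr_0=\Gr$, $\Gr_{n+1}=\widetilde{\Gr}_n$, and where each connecting map $\iota_n\colon\Gr_n\to\Gr_{n+1}$ is the inclusion, which is an \emph{open} injective graph homomorphism; recall moreover that each $\Gr_n$ is non-empty and is infinite as soon as $\Gr$ is. Thus each of the statements $(1)$--$(5)$ is obtained by a two-step argument. First, an easy induction on $n$ using the corresponding item of Proposition \ref{PropRandomExt} relates the property for $\Gamma\curvearrowright\Gr_n$ (for every $n$) to the same property for $\Gamma\curvearrowright\Gr_0=\Gr$. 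Second, the corresponding item of Proposition \ref{PropLim} or Proposition \ref{PropGraphLim} (whose hypothesis, openness of the $\iota_n$, holds) transfers the property between the sequence $(\Gr_n)_n$ and its limit $\GR$. Explicitly, $(1)$ uses Proposition \ref{PropRandomExt}(1) and Proposition \ref{PropLim}(1); $(2)$ uses Proposition \ref{PropRandomExt}(3) and Proposition \ref{PropLim}(4); $(3)$ uses Proposition \ref{PropRandomExt}(4) and Proposition \ref{PropGraphLim}(3); $(4)$ uses Proposition \ref{PropRandomExt}(2) and Proposition \ref{PropGraphLim}(4); and $(5)$ uses Proposition \ref{PropRandomExt}(7) and Proposition \ref{PropGraphLim}(5).

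For $(6)$, assuming $\Gamma$ torsion-free, $\Gr$ infinite and $\Gamma\curvearrowright\Gr$ free, the graphs $\Gr_n$ are all infinite, so Proposition \ref{PropRandomExt}(5) gives inductively that $\Gamma\curvearrowright\Gr_n$ is free for every $n$, whence $\Gamma\curvearrowright\GR$ is free by Proposition \ref{PropLim}(2). For $(7)$, the same scheme first yields strong faithfulness of the limit action: if $\Gamma\curvearrowright\Gr$ is strongly faithful and $\Gr$ is infinite, then (all $\Gr_n$ being infinite) Proposition \ref{PropRandomExt}(6) gives inductively that each $\Gamma\curvearrowright\Gr_n$ is strongly faithful, hence $\Gamma\curvearrowright\GR$ is strongly faithful by Proposition \ref{PropLim}(3).

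It remains to deduce property $(F)$ from strong faithfulness of $\Gamma\curvearrowright\GR$, and this is the only point that is not purely formal. The difficulty is that a finite union of the (co-infinite) fixed-point sets $\{x : gx=x\}$, $g\in S$, need not be co-infinite, so one cannot in general find a single vertex outside $F$ that is moved by every element of $S$; one has to go to a higher level of the inductive limit and manufacture such a vertex as a finite subset. Concretely, identify $\GR$ with $\cup^{\uparrow}\Gr_n$ (an increasing union of induced subgraphs). Given a finite $S=\{g_1,\dots,g_m\}\subset\Gamma\setminus\{1\}$ and a finite $F\subset V(\GR)$, strong faithfulness makes each set $\{x\in V(\GR) : g_ix\neq x\}$ infinite, so one can pick inductively $x_1,\dots,x_m\in V(\GR)$ with $g_ix_i\neq x_i$ and with the $2m$ elements $x_1,\dots,x_m,g_1x_1,\dots,g_mx_m$ pairwise distinct and all outside $F$. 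Set $U=\{x_1,\dots,x_m\}$: then $U\cap F=\emptyset$, and for every $i$ one has $g_ix_i\in g_iU\setminus U$, so $g_iU\neq U$. Choosing $n$ with $U\cup F\subset V(\Gr_n)$, the vertex $U\in\mathcal{P}_f(V(\Gr_n))\subset V(\Gr_{n+1})\subset V(\GR)$ lies outside $V(\Gr_n)$, hence outside $F$; since $\Gr_{n+1}=\widetilde{\Gr}_n$ is an induced subgraph of $\GR$ and the only neighbours of $U$ in $\Gr_{n+1}$ are the elements of $U$, we get $U\nsim u$ for all $u\in F$; and $g_i\cdot U=g_iU\neq U$ for all $i$. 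So $x=U$ witnesses property $(F)$, and this last construction is where the real work lies.
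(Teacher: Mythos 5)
Your proposal is correct and follows essentially the same route as the paper: items (1)--(6) by combining Proposition \ref{PropRandomExt} with Propositions \ref{PropLim} and \ref{PropGraphLim} through the inductive limit, and item (7) by first propagating strong faithfulness to the levels $\Gr_N$ and then realizing the witness for property $(F)$ as a finite subset $U\in\mathcal{P}_f(V(\Gr_N))\subset V(\Gr_{N+1})$ chosen so that $g_iU\neq U$, with non-adjacency to $F$ coming from openness of the inclusion $\Gr_{N+1}\subset\GR$. The only differences from the paper's proof (invoking strong faithfulness of the limit action rather than working at a fixed level, and imposing a slightly stronger pairwise-distinctness condition on the $x_i$ and $g_ix_i$) are immaterial.
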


\begin{proof}
The assertions $(1)$ to $(6)$ follow directly follows from Propositions \ref{PropRandomExt}, \ref{PropLim} and \ref{PropGraphLim}. Let us prove $(7)$. We recall that $\GR=\Gr_\infty=\cup^\uparrow\Gr_n$, where $\Gr_0=\Gr$ and $\Gr_{n+1}=\widetilde{\Gr_n}$. Let $F\subset V(\Gr_\infty)$ and $S=\{g_1,\dots,g_n\}\subset\Gamma\setminus \{1\}$ be finite subsets. Since $\Gamma\curvearrowright\Gr$ is strongly faithful we deduce, by Proposition \ref{PropRandomExt} (6) and induction that $\Gamma\curvearrowright\Gr_N$ is strongly faithful for all $N$. Let $N\in\N$ be large enough so that $F\subset V(\Gr_N)$. We can use strong faithfulness (and the fact that $\Gr_N$ is infinite) to construct, by induction, pairwise distinct vertices $y_1,\dots y_n\in V(\Gr_N)\setminus F$ such that $y_i\neq g_jy_j$ for all $1\leq i,j\leq n$. 
Define $x:=\{y_1,\dots y_n\}\in \mathcal{P}_f(V(\Gr_N))\subset V(\Gr_{N+1})\subset V(\Gr_\infty)=V(\GR)$. By construction $x\in V(\GR)\setminus F$ and $g_kx\neq x$ for all $k$. Since the inclusion $\Gr_{N+1}\subset\Gr_\infty=\GR$ is open, we also have $x\nsim u$ for all $u\in F$.
\end{proof}

\begin{corollary}\label{CorActR}
Every infinite countable group $\Gamma$ admits an action $\Gamma\curvearrowright\GR$ that is non-singular, has property $(F )$ and disconnects the finite sets. If $\Gamma$ is torsion-free then the action can be chosen to be moreover free.
\end{corollary}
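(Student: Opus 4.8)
The plan is to feed the machinery of Corollary \ref{Cor-ActionR} with the simplest possible action on a graph, namely the left-translation action of $\Gamma$ on itself regarded as an edgeless graph. First I would let $\Gr$ be the graph with $V(\Gr)=\Gamma$ and $E(\Gr)=\emptyset$; since $\Gamma$ is countable, this is a non-empty countable graph, and since $\Gamma$ is infinite, $\Gr$ is infinite. Equip $\Gr$ with the action $\Gamma\curvearrowright\Gr$ given by left translation, $g\cdot h=gh$; as $\Gr$ has no edges, every bijection of $V(\Gr)$ is a graph automorphism, so this is genuinely an action on the graph $\Gr$.

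Next I would record the elementary properties of this action. It is free, because left translation on a group is free; hence, $\Gr$ being infinite, it is strongly faithful (by the characterisation recalled in Section $1.2$, a free action on an infinite set is strongly faithful), and in particular faithful. It is non-singular because $E(\Gr)=\emptyset$ rules out $gu\sim u$ altogether. Its unique orbit is $\Gamma$, which is infinite, so the action has infinite orbits; and since $E(\Gr)=\emptyset$, this is equivalent (by the remark following the definition of disconnecting the finite sets) to the fact that $\Gamma\curvearrowright\Gr$ disconnects the finite sets.

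Then I would invoke Corollary \ref{Cor-ActionR} for the induced action $\Gamma\curvearrowright\GR$: part $(1)$ gives faithfulness (not strictly needed, as property $(F)$ already implies it), part $(3)$ gives that $\Gamma\curvearrowright\GR$ disconnects the finite sets, part $(4)$ gives non-singularity, and part $(7)$ gives property $(F)$, using that $\Gamma\curvearrowright\Gr$ is strongly faithful and $\Gr$ is infinite. If moreover $\Gamma$ is torsion-free, part $(6)$ gives that $\Gamma\curvearrowright\GR$ is free, using that $\Gamma\curvearrowright\Gr$ is free, $\Gr$ is infinite and $\Gamma$ is torsion-free. This is exactly the asserted statement.

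There is essentially no obstacle here: the one point requiring a moment's thought is that for an edgeless graph ``disconnects the finite sets'' reduces to ``has infinite orbits'', which is precisely the remark recorded just after the definition of disconnecting the finite sets; everything else is a direct application of the already-established Corollary \ref{Cor-ActionR}. The substance of the result is thus entirely contained in the preceding propositions on Random Extensions and inductive limits.
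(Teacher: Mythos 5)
Your proposal is correct and coincides with the paper's own proof: both take the edgeless graph on $\Gamma$ with the left-translation action, note that it is free, non-singular, strongly faithful, and disconnects the finite sets, and then apply Corollary \ref{Cor-ActionR}. Your write-up merely spells out a few of the routine verifications that the paper leaves implicit.
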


\begin{proof}
Consider the graph $\Gr$ defined by $V(\Gr)=\Gamma$ and $E(\Gr)=\emptyset$ with the action $\Gamma\curvearrowright\Gr$ given by left multiplication which is free, has infinite orbits and hence disconnects the finite sets and is non-singular since $E(\Gr)=\emptyset$. By Corollary \ref{Cor-ActionR} the induced action $\Gamma\curvearrowright\Rc$ has the required properties. 
\end{proof}

\subsection{The Random Extension with paramater}

\noindent We now describe a parametrized version of the induced action. As explained before, freeness is not preserved when passing to the action on the Random Extension and also when passing to the induced action. However, it is easy to compute explicitly the fixed points of any group element in the Random Extension. Now, given an action $\Gamma\curvearrowright\Gr$ and some fixed group elements in $\Gamma$, one can modify the Random Extension by removing the fixed points of our given group elements to make them act freely on this modified version of the Random Extension. Then, the inductive limit process associated to this modified Random Extension will also produce, in some cases, the Random Graph with an action on it, for which our given group elements act freely. We call this process the \textit{parametrized Random Extension}.

\vspace{0.2cm}

\noindent Let $\pi\,:\,\Gamma\curvearrowright \Gr$ be an action of the group $\Gamma$ on the graph $\Gr$ and $F\subset\Gamma$ be a subset. From the action $\pi$, we have a canonical action $\Gamma\curvearrowright\widetilde{\Gr}$ on the Random Extension.

\begin{remark}\label{RmkFree} If $\Gr$ is infinite, for any $g\in\Gamma$, the set of fixed points of $g$ for the action on the Random Extension is either empty or of the form :
$${\rm Fix}_{\widetilde{\Gr}}(g)=\{A\in P_f(V(\Gr))\text{ of the form }A=\sqcup_{i=1}^N\langle g\rangle x_i\text{ with }\langle g\rangle x_i\text{ finite }\forall i\}\sqcup{\rm Fix}_{\Gr}(g).$$
In particular, when the action $\pi$ is free one has ${\rm Fix}_{\widetilde{\Gr}}(g)=\emptyset$ whenever $g$ has infinite order and, if $g$ has finite order, then ${\rm Fix}_{\widetilde{\Gr}}(g)$ is the set of finite unions of $\langle g\rangle$-orbits.
\end{remark}

\noindent Consider the induced subgraph on $V(\widetilde{\Gr})\setminus \{{\rm Fix}_{\widetilde{\Gr}}(g)\,:\,g\in F\}$. Note that if $gFg^{-1}=F$ for all $g\in\Gamma$ then, since $g{\rm Fix}_{\widetilde{\Gr}}(h)={\rm Fix}_{\widetilde{\Gr}}(ghg^{-1})$, the induced subgraph on $V(\widetilde{\Gr})\setminus \{{\rm Fix}_{\widetilde{\Gr}}(g)\,:\,g\in F\}$ is globally $\Gamma$-invariant and we get an action of $\Gamma$ on it by restriction for which the elements of $F$ act freely by construction. However, for a general $F$, we cannot restrict the action and this is why we will remove more sets then the fixed points of elements of $F$.

\vspace{0.2cm}

\noindent Assume from now that $\Gr$ is a graph and $l\in\N^*$. We define the graph $\widetilde{\Gr}_{l}$, the \textit{Random Extension of} $\Gr$ \textit{with parameter $l$}, as the induced subgraph on
$$V(\widetilde{\Gr}_l):=V(\Gr)\sqcup\{U\in\mathcal{P}_f(V(\Gr))\,:\,\gcd(l,\vert U\vert)=1\}\subset V(\widetilde{\Gr}).$$
Fix an action $\Gamma\curvearrowright \Gr$. Since for any $g\in\Gamma$ and any finite subset $U\subset V(\Gr)$ one has $\vert gU\vert=\vert U\vert$, the subgraph $\widetilde{\Gr}_{l}$ is globally $\Gamma$-invariant and we get an action $\widetilde{\pi}_l\,:\,\Gamma\curvearrowright\widetilde{\Gr}_{l}$ by restriction. Note that for any $u\in V(\Gr)$ one has $\{u\}\in V(\widetilde{\Gr}_{l})$. It is clear that the inclusions of $\Gr$ in $\widetilde{\Gr}_{l}$ and of $\widetilde{\Gr}_{l}$ in $\widetilde{\Gr}$ are $\Gamma$-equivariant open (and injective) graph homomorphisms.

\begin{proposition}\label{PropRelRandomExt}
Let $\pi\,:\,\Gamma\curvearrowright \Gr$ be an action and $l\in\N^*$.
\begin{enumerate}
\item $\pi$ is faithful if and only if $\widetilde{\pi}_l$ is faithful.
\item $\pi$ is non-singular if and only if $\widetilde{\pi}_l$ is non-singular.
\item $\pi$ has infinite orbits if and only if $\widetilde{\pi}_l$ has infinite orbits.
\item $\pi$ disconnects the finite sets if and only if $\widetilde{\pi}_l$ disconnects the finite sets.
\item Let $\Sigma<\Gamma$ be a subgroup. $\Sigma$ is highly core-free w.r.t. $\pi$ if and only if $\Sigma$ is highly core-free w.r.t. $\widetilde{\pi}_l$.
\item If $\pi$ is strongly faithful and $\Gr$ is infinite then $\widetilde{\pi}_l$ is strongly faithful.
\item  Let $\Sigma<\Gamma$ be a finite subgroup. If $\Sigma\curvearrowright\Gr$ is free and $\Gr$ is infinite then $\Sigma\curvearrowright \widetilde{\Gr}_{\vert\Sigma\vert}$ is free.
\end{enumerate}
\end{proposition}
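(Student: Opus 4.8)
The plan is to exploit the chain of $\Gamma$-equivariant, open, injective graph homomorphisms $\Gr\hookrightarrow\widetilde{\Gr}_l\hookrightarrow\widetilde{\Gr}$ recorded just before the statement, together with one extra structural observation: since every $g\in\Gamma$ preserves the cardinality of a finite subset of $V(\Gr)$, the defining condition $\gcd(l,|U|)=1$ is $\Gamma$-invariant, so $\widetilde{\Gr}_l$ is a union of full $\Gamma$-orbits of $\widetilde{\Gr}$. Consequently, for any $z\in V(\widetilde{\Gr}_l)$ the $\widetilde{\pi}_l$-orbit of $z$ coincides with its $\widetilde{\pi}$-orbit, and because the inclusions are open, adjacency in $\widetilde{\Gr}_l$ agrees with adjacency in $\widetilde{\Gr}$ and, on $V(\Gr)$, with adjacency in $\Gr$. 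These facts let me sandwich $\widetilde{\pi}_l$ between $\pi$ and $\widetilde{\pi}$ and import the corresponding clauses of Proposition \ref{PropRandomExt}.

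For assertions (1)--(5) I would run the same two-step argument for each property $P$. The backward implication ``$\widetilde{\pi}_l$ has $P\Rightarrow\pi$ has $P$'' follows by restricting to the open $\Gamma$-invariant subgraph $\Gr\subset\widetilde{\Gr}_l$: faithfulness, non-singularity, infinite orbits, the disconnecting property and high core-freeness all involve only adjacencies and orbits, which are faithfully reflected on $V(\Gr)$. The forward implication ``$\pi$ has $P\Rightarrow\widetilde{\pi}_l$ has $P$'' goes through the full extension: by the relevant clause of Proposition \ref{PropRandomExt}, $P$ holds for $\widetilde{\pi}$, and then I transport it down to the invariant open subgraph $\widetilde{\Gr}_l$. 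Here the orbit-preservation and adjacency-preservation observations are exactly what is needed: a witness $g\in\Gamma$ produced for a finite set $F\subset V(\widetilde{\Gr}_l)\subset V(\widetilde{\Gr})$ still witnesses the property for $\widetilde{\pi}_l$, because $gF$ (and, in the case of (5), also $\Sigma F$ and $\Sigma gF$) stays inside the $\Gamma$- and $\Sigma$-invariant set $\widetilde{\Gr}_l$, while the adjacency conditions $gu\nsim v$ and $\sigma gu\nsim gv$ are unchanged. For faithfulness (1) the argument is even more direct: $V(\Gr)\subset V(\widetilde{\Gr}_l)$ and an element fixing all of $V(\Gr)$ fixes every finite subset, so $\pi$ and $\widetilde{\pi}_l$ have the same kernel.

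Assertion (6) is handled directly rather than through $\widetilde{\pi}$. If $\pi$ is strongly faithful and $\Gr$ is infinite, then for every $g\neq 1$ the set $\{u\in V(\Gr):gu\neq u\}$ is infinite; since $V(\Gr)\subset V(\widetilde{\Gr}_l)$, the set of points of $\widetilde{\Gr}_l$ moved by $g$ is infinite as well, which is exactly strong faithfulness of $\widetilde{\pi}_l$.

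The real point, and the only place the parameter $l$ is used essentially, is assertion (7), which I expect to be the main obstacle. Take $l=|\Sigma|$, assume $\Sigma\curvearrowright\Gr$ is free with $\Gr$ infinite, and let $\sigma\in\Sigma\setminus\{1\}$ and $z\in V(\widetilde{\Gr}_{|\Sigma|})$; I must show $\sigma z\neq z$. If $z=u\in V(\Gr)$ this is freeness of $\Sigma\curvearrowright\Gr$. If $z=U$ is a finite subset with $\gcd(|\Sigma|,|U|)=1$ and $\sigma U=U$, then $\langle\sigma\rangle$ acts on the finite set $U$; freeness of $\Sigma\curvearrowright\Gr$ forces this action to be free, so $U$ is partitioned into $\langle\sigma\rangle$-orbits each of cardinality $\operatorname{ord}(\sigma)$, whence $\operatorname{ord}(\sigma)$ divides $|U|$. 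But $\operatorname{ord}(\sigma)$ divides $|\Sigma|$ by Lagrange and $\operatorname{ord}(\sigma)>1$, contradicting $\gcd(|\Sigma|,|U|)=1$. Hence $\sigma U\neq U$ and $\widetilde{\pi}_{|\Sigma|}$ is free. The only subtlety worth checking is that discarding the subsets $U$ with $\gcd(l,|U|)>1$ preserves $\Gamma$-invariance, but this is guaranteed by cardinality preservation, the same observation underpinning parts (1)--(5).
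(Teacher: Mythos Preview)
Your proposal is correct and follows essentially the same approach as the paper: the authors simply declare assertions (1)--(6) ``obvious'' (your sandwich argument through Proposition~\ref{PropRandomExt} is an explicit unpacking of this), and for (7) their proof is exactly your orbit-counting argument via Remark~\ref{RmkFree}, observing that a finite subset fixed by a nontrivial $\sigma\in\Sigma$ must have cardinality divisible by $\operatorname{ord}(\sigma)$, hence cannot satisfy $\gcd(|\Sigma|,|U|)=1$.
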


\begin{proof}
Assertions $(1)$ to $(6)$ are obvious. Let us prove $(7)$. Since any non-trivial element of $\Sigma$ acts freely, it follows from Remark \ref{RmkFree} that any finite subset $U\subset V(\Gr)$ in the set of fixed points of $\sigma\in\Sigma$ is a finite union of sets of the form $\langle\sigma\rangle x$, hence its size is a multiple of the order of $\sigma$ and $U\notin V(\widetilde{\Gr}_{\vert\Sigma\vert})$.
\end{proof}

\noindent We can now construct \textit{the induced action on $\GR$ of the action $\pi$ with paramter $l$}. Define the sequence of graphs $\Gr_0=\Gr$ with action $\pi_0=\pi$ of $\Gamma$ on it and, for $n\geq 0$, $\Gr_{n+1}=\widetilde{(\Gr_n)}_{l}$ with the action $\pi_{n+1}=(\widetilde{\pi_n})_l$. Consider the inductive limit $\Gr^l_\infty$ with the inductive limit action $\pi_\infty^l\,:\,\Gamma\curvearrowright \Gr^l_\infty$ on it. 
We list the properties of $\pi_l$ in the next Proposition.

\begin{proposition}\label{PropRelRandExt}
Let $\pi\,:\,\Gamma\curvearrowright \Gr$ be an action on a countable graph $\Gr$ and let $l\in\N^*$.
\begin{enumerate}
\item If $\Gr$ is infinite then $\Gr_\infty^l\simeq\GR$.
\item $\pi$ is faithful if and only if $\pi_\infty^l$ is faithful.
\item $\pi$ has infinite orbits if and only if $\pi_\infty^l$ has infinite orbits.
\item $\pi$ disconnects the finite sets if and only if $\pi_\infty^l$ disconnects the finite sets.
\item Let $\Sigma<\Gamma$ be a subgroup. $\Sigma$ is highly core-free w.r.t. to $\pi$ if and only if $\Sigma$ is highly core-free w.r.t. to $\pi_\infty^l$.
\item $\pi$ is non-singular if and only if $\pi_\infty^l$ is non-singular.
\item Let $\Sigma<\Gamma$ be a finite subgroup. If $\Sigma\curvearrowright\Gr$ is free and $\Gr$ is infinite then $\Sigma\curvearrowright\Gr_\infty^{\vert\Sigma\vert}$ is free.
\item Suppose that $\Gamma$ and $\Gr$ are infinite. If $\pi$ is strongly faithful then $\pi_\infty^l$ has property $(F )$.
\end{enumerate}
\end{proposition}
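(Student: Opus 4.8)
The plan is to deduce each assertion from the corresponding one-step statement in Proposition \ref{PropRelRandomExt} together with the inductive-limit machinery of Propositions \ref{PropLim} and \ref{PropGraphLim}, exactly as in the proof of Corollary \ref{Cor-ActionR}. First I would record the structural facts about the tower $\Gr_0=\Gr$, $\Gr_{n+1}=\widetilde{(\Gr_n)}_l$: each connecting map $\iota_n$ is an injective open $\Gamma$-equivariant graph homomorphism (noted just before Proposition \ref{PropRelRandomExt}), so $\Gr_\infty^l$ is, up to isomorphism, an increasing union of induced subgraphs $\Gr_n$ carrying the inductive-limit action $\pi_\infty^l$, and every $\iota_{\infty,n}$ is open. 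This openness is what makes the ``converse'' directions in Propositions \ref{PropLim}(4), \ref{PropGraphLim}(3)--(5) available.

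For assertion $(1)$: since $\Gr$ is infinite and non-empty, so is $\Gr_\infty^l$ (it is at most countable and contains $\Gr$); by Proposition \ref{PropRelRandomExt} one could check property $(R)$ directly, but more simply the same argument as in Proposition \ref{PropStability}$(3)$ applies — given disjoint finite $U,V\subset V(\Gr_\infty^l)$, choose $n$ with $U,V\subsetneq V(\Gr_n)$ and take $z=U$ (or, if $U=\emptyset$, a singleton $\{x\}$ with $x\notin V$), which lies in $V(\Gr_{n+1})$ because $\gcd(l,1)=\gcd(l,|U|)$ can be arranged; the only subtlety is that we must pick $z$ whose cardinality is coprime to $l$. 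If $U\neq\emptyset$ we cannot always take $z=U$, but we may enlarge $U$ inside $\Gr_n$ by adding vertices until its size is coprime to $l$ (possible since $\Gr_n$ is infinite and consecutive integers are coprime to $l$ infinitely often), while keeping $z$ adjacent to all of $U$ and to none of $V$; this forces $\Gr_\infty^l$ to have property $(R)$, hence $\Gr_\infty^l\simeq\GR$. Assertions $(2)$, $(3)$, $(4)$, $(6)$ then follow by combining the ``if and only if'' in Proposition \ref{PropRelRandomExt}$(1),(3),(4),(2)$ with induction (so each $\pi_n$ has the property iff $\pi$ does) and then with Propositions \ref{PropLim}(1),(4) and \ref{PropGraphLim}(3),(4), using openness of the $\iota_n$ for the converse directions. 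Assertion $(5)$ is identical, invoking Proposition \ref{PropRelRandomExt}$(5)$ and Proposition \ref{PropGraphLim}$(5)$.

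For assertion $(7)$, with $l=|\Sigma|$: by Proposition \ref{PropRelRandomExt}$(7)$ and induction, $\Sigma\curvearrowright\Gr_n$ is free for every $n$ (the inductive step applies because each $\Gr_n$ is infinite, being an increasing tower over the infinite $\Gr$, and $\Sigma\curvearrowright\Gr_n$ free implies $\Sigma\curvearrowright\widetilde{(\Gr_n)}_{|\Sigma|}$ free); then Proposition \ref{PropLim}$(2)$ gives freeness of $\Sigma\curvearrowright\Gr_\infty^{|\Sigma|}$. Finally, for assertion $(8)$ I would mimic the proof of Corollary \ref{Cor-ActionR}$(7)$: given finite $S\subset\Gamma\setminus\{1\}$ and finite $F\subset V(\Gr_\infty^l)$, pick $N$ with $F\subset V(\Gr_N)$; by Proposition \ref{PropRelRandomExt}$(6)$ and induction each $\pi_n$ is strongly faithful, so using strong faithfulness of $\pi_N$ and $|V(\Gr_N)|=\infty$ one builds pairwise distinct $y_1,\dots,y_m\in V(\Gr_N)\setminus F$ (where $S=\{g_1,\dots,g_m\}$) with $y_i\neq g_jy_j$ for all $i,j$; then $x=\{y_1,\dots,y_m\}$ works provided $\gcd(l,m)=1$, and if not we simply pad the set with extra distinct vertices of $V(\Gr_N)\setminus F$ (avoiding creating a coincidence $y_i=g_jy_j$, again possible by strong faithfulness on the infinite set $\Gr_N$) until its cardinality is coprime to $l$, so that $x\in V(\Gr_{N+1})\subset V(\GR)$; openness of $\Gr_{N+1}\hookrightarrow\Gr_\infty^l$ gives $x\nsim u$ for all $u\in F$, and $g_kx\neq x$ for all $k$ by construction.

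The main obstacle is the coprimality constraint $\gcd(l,|U|)=1$ that restricts which finite subsets are admitted as new vertices in the parametrized extension: it affects the verification of property $(R)$ in $(1)$ and the construction of the witness vertex in $(8)$. In both cases the fix is the same padding trick — enlarge the candidate finite set within the (infinite) current graph level, adjusting nothing essential, until its cardinality avoids the prime factors of $l$; this is harmless because among any $l$ consecutive integers at least one is coprime to $l$, and the extra vertices can always be chosen to respect the adjacency and non-fixing requirements since the relevant graph level is infinite.
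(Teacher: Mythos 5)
Your proposal is correct and follows essentially the same route as the paper: deduce (2)--(7) from Proposition \ref{PropRelRandomExt} together with Propositions \ref{PropLim} and \ref{PropGraphLim} (using openness of the connecting maps for the converse directions), and handle the coprimality constraint in (1) and (8) by adjusting the cardinality of the candidate finite set. The only cosmetic difference is in (8), where the paper enlarges the set $S$ of group elements in advance so that $\gcd(|S|,l)=1$, whereas you pad the constructed vertex set afterwards; the two devices are interchangeable.
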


\begin{proof}
The assertions $(2)$ to $(7)$ follow directly follows from Propositions \ref{PropRelRandomExt}, \ref{PropLim} and \ref{PropGraphLim}.

\vspace{0.2cm}

\noindent$(1)$. Since $\Gr$ is infinite it follows that $\Gr_n$ is infinite for all $n$ and $\Gr_\infty$ is infinite. Hence, it suffices to check that $\Gr_\infty$ has property $(R )$. Let $U,V\subset V(\Gr_\infty)$ be two finite subsets such that $U\cap V=\emptyset$ and let $n\in\N$ such that $U,V\subset V(\Gr_n)$. Since $V(\Gr_n)\setminus V$ is infinite, we may find a finite subset $x\subset V(\Gr_n)\setminus V$ such that $U\subset x$ and $gcd(\vert x\vert,\vert\Sigma\vert)=1$. Hence, $x\in V((\widetilde{\Gr_n})_{\vert\Sigma\vert})\subset V(\Gr_\infty^{\vert\Sigma\vert})$ is such that $x\sim u$ for all $u\in U$ and $x\nsim v$ for all $v\in V$.

\vspace{0.2cm}

\noindent$(8)$. Let $S=\{g_1,\dots,g_n\}\subset\Gamma\setminus \{1\}$ and $F\subset V(\Gr_\infty^l)$ be finite subsets. Taking a larger $S$ if necessary, we may and will assume that $\gcd(n,l)=1$. We repeat the proof of Corollary \ref{Cor-ActionR}, assertion $(7)$ and we get pairwise distinct vertices $y_1,\dots y_n\in V(\Gr_N)$, where $F\subset V(\Gr_N)$ such that the element $x=\{y_1,\dots,y_n\}\in\mathcal{P}_f(V(\Gr_N))\subset\widetilde{\Gr}_N$ satisfies the desired properties. Since $\gcd(\vert x\vert,l)=1$, $x\in V((\widetilde{\Gr_N)}_l)\subset V(\Gr_\infty^l)$. This concludes the proof.
\end{proof}

\noindent Let $\Sigma<\Gamma$ be a finite subgroup of an infinite countable group $\Gamma$. By considering the induced action on $\GR$ with parameter $l=\vert\Sigma\vert$ of the free action by left multiplication $\Gamma\curvearrowright\Gamma$ and view $\Gamma$ as a graph with no edges, we obtain the following Corollary.

\begin{corollary}\label{CorRelActR}
Let $\Sigma<\Gamma$ be a finite subgroup of an infinite countable group $\Gamma$. There exists a non-singular action $\Gamma\curvearrowright\GR$ with property $(F )$ such that the action $\Sigma\curvearrowright\GR$ is free, the action $H\curvearrowright \GR$ of any infinite subgroup $H<\Gamma$ disconnects the finite sets and, for any pair of intermediate subgroups $\Sigma'<H'<\Gamma$, if $\Sigma'$ is a highly core-free in $H'$ then $\Sigma'$ is highly core-free w.r.t. $H'\curvearrowright\GR$.
\end{corollary}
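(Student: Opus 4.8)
The plan is to apply Proposition~\ref{PropRelRandExt} to one explicit action. Take $\Gr$ to be the graph with $V(\Gr)=\Gamma$ and $E(\Gr)=\emptyset$, let $\pi\,:\,\Gamma\curvearrowright\Gr$ be the action by left translation, and set $l=|\Sigma|$. First I would record the elementary properties of $\pi$: it is free, hence strongly faithful (since $\Gamma$ is infinite, every non-trivial element moves \emph{every} vertex); it is non-singular because $E(\Gr)=\emptyset$; and its restriction $\Sigma\curvearrowright\Gr$ is free. The graph $\Gr$ is infinite, so Proposition~\ref{PropRelRandExt}(1) yields a $\Gamma$-action on $\Gr_\infty^{l}\simeq\GR$; then Proposition~\ref{PropRelRandExt}(8) (both $\Gamma$ and $\Gr$ infinite, $\pi$ strongly faithful) gives property $(F)$, Proposition~\ref{PropRelRandExt}(6) gives non-singularity, and Proposition~\ref{PropRelRandExt}(7) (with $l=|\Sigma|$) gives that $\Sigma\curvearrowright\Gr_\infty^{|\Sigma|}$ is free. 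Fixing an isomorphism $\Gr_\infty^{l}\simeq\GR$ and transporting the $\Gamma$-action along it, this already settles the first three clauses of the statement.

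For the assertions about subgroups $H,H'<\Gamma$, the key observation I would make is that the parametrized random-extension construction is purely graph-theoretic and the successive actions are defined by restriction: the vertex set $V(\widetilde{(\Gr_n)}_l)$ refers only to the graph $\Gr_n$, and on a finite subset $U$ the action of a group element $h$ is $h\cdot U=hU$, independently of the ambient group. Consequently, for any subgroup $H<\Gamma$, the action of $H$ on $\Gr_\infty^{l}$ obtained by restricting the $\Gamma$-action coincides with the induced action with parameter $l$ of the restricted action $H\curvearrowright\Gr$ in the sense of Section~2.3. Granting this: if $H<\Gamma$ is infinite, then $H\curvearrowright\Gr$ (left translation on the edgeless graph $\Gamma$, whose $H$-orbits are the right cosets $Hg$) has infinite orbits, hence disconnects the finite sets because $E(\Gr)=\emptyset$; Proposition~\ref{PropRelRandExt}(4) applied to $H\curvearrowright\Gr$ then shows that $H\curvearrowright\Gr_\infty^{l}$ disconnects the finite sets.

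Similarly, given an intermediate pair $\Sigma'<H'<\Gamma$ with $\Sigma'$ highly core-free in $H'$, I would use that $E(\Gr)=\emptyset$ and that $H'\curvearrowright\Gr$ is free (left translation restricted to a subgroup is still free) to invoke the identification recalled in Section~1.2: ``$\Sigma'$ highly core-free in $H'$'' is equivalent to ``$\Sigma'$ highly core-free w.r.t.\ $H'\curvearrowright\Gr$''. Then Proposition~\ref{PropRelRandExt}(5), applied to $H'\curvearrowright\Gr$ with the same parameter $l=|\Sigma|$ (the assertion holds for arbitrary $l\in\N^{*}$), transports this to ``$\Sigma'$ highly core-free w.r.t.\ $H'\curvearrowright\Gr_\infty^{l}$'', which is the desired conclusion after transporting along $\Gr_\infty^{l}\simeq\GR$.

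The proof is thus essentially a bookkeeping assembly of already-established facts. The only step that is not an immediate citation is the observation of the second paragraph that the parametrized construction commutes with restriction to subgroups, and that is where I expect to need to be slightly careful: Proposition~\ref{PropRelRandExt} is phrased for a fixed acting group, whereas here we must read off its conclusions simultaneously for $\Gamma$, for all infinite subgroups $H$, and for all highly core-free intermediate pairs $\Sigma'<H'$.
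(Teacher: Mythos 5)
Your proposal is correct and follows exactly the paper's route: the paper's entire justification is the one-sentence remark preceding the corollary, namely to apply Proposition \ref{PropRelRandExt} with parameter $l=|\Sigma|$ to the left-translation action on the edgeless graph $\Gamma$, which is precisely your construction. The extra care you take in the second paragraph (that the parametrized construction commutes with restriction to subgroups, so the conclusions of Proposition \ref{PropRelRandExt} can be read off for $H$ and for the pairs $\Sigma'<H'$ as well) is exactly the point the paper leaves implicit, and your justification of it is sound.
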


\subsection{Homogeneous actions on the Random Graph}

\begin{proposition}
Let $\Gamma\curvearrowright\GR$ be an homogeneous action. The following holds.
\begin{enumerate}
\item $\Gamma\curvearrowright\GR$ disconnects the finite sets.
\item If $N<\Gamma$ is normal then either $N$ acts trivially or it acts homogeneously.
\end{enumerate}
\end{proposition}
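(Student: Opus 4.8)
For part (1), the plan is to unwind the definitions directly. Given a finite subset $F\subset V(\GR)$, I want $g\in\Gamma$ with $gF\cap F=\emptyset$ and $gu\nsim v$ for all $u,v\in F$. By Proposition \ref{PropStability}(2), the induced subgraph $\GR_{\emptyset, F}$ of vertices adjacent to no vertex of $F$ is itself isomorphic to $\GR$, hence infinite, so I can pick a subset $F'\subset \GR_{\emptyset,F}$ disjoint from $F$ with $|F'|=|F|$ and with the induced graph on $F'$ isomorphic to the induced graph on $F$ (again using property $(R)$ to realize any finite graph as an induced subgraph, cf.\ Proposition \ref{PropStability}(3), and the fact that $\GR_{\emptyset,F}$ is a copy of $\GR$). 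Fix any graph isomorphism $\varphi\colon F\to F'$; this is an element of $P(\GR)$, so by homogeneity there is $g\in\Gamma$ with $g|_F=\varphi$. Then $gF=F'\subset\GR_{\emptyset,F}$, which is disjoint from $F$ and every vertex of which is non-adjacent to every vertex of $F$; in particular $gF\cap F=\emptyset$ and $gu\nsim v$ for all $u,v\in F$. Hence $\Gamma\curvearrowright\GR$ disconnects the finite sets.

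For part (2), let $N\lhd\Gamma$ be normal and suppose $N$ does not act trivially; I must show $N\curvearrowright\GR$ is homogeneous. Fix $\varphi\in P(\GR)$ with domain $U$ and range $V$, both finite. The first step is to enlarge $U$ to a finite set $U_1\supseteq U$ on which $N$ acts faithfully in a suitable local sense: since $N$ acts nontrivially, the set of vertices moved by $N$ is nonempty, and using part (1) (disconnecting finite sets) together with homogeneity of $\Gamma$ I can arrange, after replacing $\varphi$ by a compatible extension, that the stabilizer in $\Gamma$ of $U_1$ pointwise is contained in the kernel of $N\curvearrowright\GR$ restricted appropriately --- more precisely I will choose $U_1$ large enough that no nontrivial element of $\Gamma$ fixing $U_1$ pointwise can be used to ``twist'' the argument. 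The key structural point is: because $\Gamma\curvearrowright\GR$ is homogeneous, there is $g\in\Gamma$ with $g|_U=\varphi$; I then want to correct $g$ by an element of $N$. Since $N$ is normal and acts nontrivially, and since (by part (1) and Proposition \ref{PropStability}) the graph $\GR_{U\cup V}$-complement is a copy of $\GR$ on which $\Gamma$ still acts homogeneously in the relevant local sense, I can find $n\in N$ such that $g^{-1}n$ (equivalently $ng$, up to conjugation) fixes $U$ pointwise, i.e.\ $n|_U=g|_U=\varphi$. The mechanism is that the $\Gamma$-action being homogeneous forces the $N$-orbit of the configuration to already exhaust all configurations of that type, once $N$ is nontrivial: concretely, one shows that $\{h\in\Gamma : h|_U \in P(\GR) \text{ with prescribed domain/range}\}$ meets every coset $gN$, by realizing the ``difference'' via a partial isomorphism supported away from $U\cup V$ that can be absorbed into $N$ using normality.

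The main obstacle is precisely this last absorption step in (2): showing that homogeneity of the ambient action plus normality of $N$ forces $N$ alone to realize every finite partial isomorphism. I expect this to go through a lemma of the following shape: if $N\lhd\Gamma$, $\Gamma\curvearrowright\GR$ is homogeneous, and some $n_0\in N$ moves some vertex, then for every $\varphi\in P(\GR)$ one can write $\varphi = h|_{d(\varphi)}$ with $h\in N$; the proof builds $h$ by a back-and-forth/inductive-limit argument where at each stage one uses that $\Gamma$ acts homogeneously to extend, and then uses normality ($g n_0 g^{-1}\in N$ for all $g\in\Gamma$) to replace the extending element of $\Gamma$ by one in $N$, exploiting that conjugates of $n_0$ by suitable $g\in\Gamma$ can move any prescribed vertex to (essentially) any other while avoiding a given finite set — the latter using part (1). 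If instead \emph{no} element of $N$ moves any vertex, then by definition $N$ acts trivially, which is the other alternative. I would isolate the back-and-forth construction as the technical core and present it carefully, since the rest is bookkeeping with the definitions of homogeneity and of ``disconnects the finite sets.''
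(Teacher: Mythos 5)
Your argument for part (1) is correct and is in substance the paper's own: the paper builds the disjoint, disconnected isomorphic copy $\{v_1,\dots,v_n\}$ of $F$ by hand, inductively choosing $v_{l+1}\in\GR_{U,V\sqcup F}$ via property $(R)$, whereas you obtain the same configuration by citing Proposition \ref{PropStability} (the induced graph $\GR_{\emptyset,F}$ is a copy of $\GR$, hence contains an induced copy of the graph on $F$, and is by definition disjoint from and disconnected from $F$). Either way the conclusion is the same appeal to homogeneity.

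Part (2) contains a genuine gap. You want to realize an arbitrary $\varphi\in P(\GR)$ by an element of $N$ by first taking $g\in\Gamma$ with $g\vert_{d(\varphi)}=\varphi$ and then ``absorbing the difference into $N$'' via normality and conjugates of a fixed nontrivial $n_0\in N$. You flag this absorption step yourself, and it is exactly where the proof is missing: what is needed is that $N\cap g\,\Stab_\Gamma(d(\varphi))\neq\emptyset$, where $\Stab_\Gamma(d(\varphi))$ is the pointwise stabilizer, and nothing in the sketch establishes this. The proposed mechanism --- that conjugates $hn_0h^{-1}$ can move prescribed vertices to prescribed targets while avoiding a finite set, and that composing such conjugates realizes any prescribed finite partial isomorphism --- requires controlling the adjacency relations among the images of all points of $d(\varphi)$ simultaneously; carrying this out amounts to proving (a local version of) the simplicity of $\autR$, which is a substantive theorem of Truss \cite{Tr85}, not bookkeeping. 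The paper sidesteps all of this: by homogeneity the image of $\Gamma$ is dense in $\autR$, so the closure $\overline{N}$ of the image of $N$ is a normal subgroup of the Polish group $\autR$; since $\autR$ is simple as an abstract group, either $\overline{N}=\{1\}$ (so $N$ acts trivially) or $\overline{N}=\autR$ (so $N$ has dense image, i.e.\ acts homogeneously). If you want to keep a direct combinatorial argument, you must either import the relevant lemma from Truss's proof or reprove it; as written, the assertion that the set of $h\in\Gamma$ realizing $\varphi$ meets the coset $gN$ is a claim, not an argument.
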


\begin{proof}
$(1)$. Let $F\subset V(\GR)$ be a finite set and write $F=\{u_1,\dots,u_n\}$ where the vertices $u_i$ are pairwise distinct. We shall define inductively pairwise distinct vertices $v_1,\dots v_n\in V(\GR)\setminus F$ such that, for all $1\leq i,j\leq n$, $u_i\sim u_j\Leftrightarrow v_i\sim v_j$ and $u_i\nsim v_j$. Take $v_1\in\GR_{\emptyset,F}$. Suppose that for a given $1\leq l\leq n-1$ we have pairwise distinct vertices $v_1,\dots ,v_l\in V(\GR)\setminus F$ such that, for all $1\leq i,j\leq l$, $u_i\sim u_j\Leftrightarrow v_i\sim v_j$ and $u_i\nsim v_j$. Let $U=\{v_i\,:\,1\leq i\leq l\text{ s.t. }u_i\sim u_{l+1}\}$ and $V=\{v_i\,:\,1\leq i\leq l\text{ s.t. }u_i\nsim u_{l+1}\}$. Then $U$ and $V\sqcup F$ are finite and disjoint. Take $v_{l+1}\in\GR_{U,V\sqcup F}$. Then $v_{l+1}\sim v_i\Leftrightarrow u_{l+1}\sim u_i$ and $v_{l+1}\nsim u_i$ for all $1\leq i\leq l$ and $v_{l+1}\notin F\sqcup\{v_i\,:\,1\leq i\leq l\}$. This concludes the construction of the $v_i$ by induction. By the properties of the $v_i$, the map $\varphi\,:\,\{u_i\,:\,1\leq i\leq n\}\rightarrow\{v_i\,:\,1\leq i\leq n\}$ defined by $\varphi(u_i)=v_i$ is an isomorphism between the induced subgraphs. Since $\Gamma\curvearrowright\GR$ is homogeneous there exists $g\in\Gamma$ such that $gu_i=v_i$ for all $1\leq i\leq n$. It follows that $gF\cap F=\emptyset$ and $gu\nsim u'$ for all $u,u'\in F$.

\vspace{0.2cm}

\noindent$(2)$. Write $\overline{N}$ the closure of the image of $N$ inside $\text{Aut}(\Rc)$. By \cite{Tr85}, the abstract group $\text{Aut}(\Rc)$ is simple and, since $\overline{N}$ is normal in $\autR$ one has either $\overline{N}=\{1\}$ or $\overline{N}=\autR$. 
\end{proof}

\noindent Using the previous Proposition and arguing as in \cite[Corollary 1.6]{MS13} we obtain the following Corollary.

\begin{corollary}\label{Obstruction}
If $\Gamma\in\mathcal{H}_\Rc$ then $\Gamma$ is icc and not solvable.
\end{corollary}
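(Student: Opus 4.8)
The plan is to derive both conclusions from part (2) of the preceding Proposition together with the simplicity of $\autR$ proved in \cite{Tr85}, following the template of \cite[Corollary 1.6]{MS13}. Suppose $\Gamma\in\mathcal{H}_\Rc$, so $\Gamma$ acts faithfully and homogeneously on $\GR$; identify $\Gamma$ with its (faithful) image in $\autR$, which is then a dense subgroup. I would treat the two assertions separately.

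For the icc property, let $g\in\Gamma\setminus\{1\}$ and consider its conjugacy class $C_g$ in $\Gamma$. I want to show $C_g$ is infinite. The key point is that by part (1) of the Proposition the action disconnects the finite sets, and more elementarily it has infinite orbits, so $\GR$ has vertices moved by $g$; in fact since the action is homogeneous one can, given any finite configuration, find elements of $\Gamma$ realizing prescribed partial isomorphisms, which lets one produce, for any $N$, $N$ distinct conjugates $h_1gh_1^{-1},\dots,h_Ngh_N^{-1}$ by choosing $h_i\in\Gamma$ moving a fixed pair $(u,gu)$ with $u\sim\!\!\!\!/\;gu$ to pairwise ``independent'' pairs $(v_i,w_i)$; distinctness of the resulting group elements follows because they act differently on these vertices. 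Alternatively, and more cleanly, one invokes that a dense subgroup of a topological group with no nontrivial finite normal subgroup — here $\autR$ is simple, hence has none — and with the property that point stabilizers are ``small'', must be icc; but the self-contained route via homogeneity realizing many distinct conjugates is the one I would write out.

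For non-solvability, suppose toward a contradiction that $\Gamma$ is solvable, with derived series $\Gamma=\Gamma^{(0)}\rhd\Gamma^{(1)}\rhd\dots\rhd\Gamma^{(n)}=\{1\}$ and $\Gamma^{(n-1)}\neq\{1\}$ abelian and normal in $\Gamma$. Apply part (2) of the Proposition to the normal subgroup $N=\Gamma^{(n-1)}$: either it acts trivially, which is impossible since the action of $\Gamma$ is faithful and $N\neq\{1\}$, or it acts homogeneously. But then $N\in\mathcal{H}_\Rc$ is a nontrivial abelian group acting homogeneously, hence with dense image in the simple group $\autR$; a nontrivial abelian group cannot have dense image in $\autR$, because by part (1) any homogeneous action disconnects the finite sets, which forces (taking $F=\{u,gu,h u\}$ type configurations and using commutativity) a contradiction — concretely, an abelian dense subgroup would be normalized by its closure $\autR$, making $\autR$ abelian-by-(something), contradicting simplicity and non-abelianness of $\autR$. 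So $\Gamma$ is not solvable.

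The main obstacle, and the only place needing genuine care, is the abelian-case contradiction in the non-solvability argument: one must argue cleanly why a nontrivial abelian (more generally solvable) group admits no faithful homogeneous action on $\GR$, i.e. why the class $\mathcal{H}_\Rc$ contains no nontrivial abelian group. The slick resolution is: if $A$ is abelian and $A\curvearrowright\GR$ is homogeneous, then $\overline{A}=\autR$ by part (2) and simplicity, but $\overline{A}$ is abelian as the closure of an abelian group, contradicting the fact that $\autR$ is non-abelian (it acts transitively on vertices and on edges, so it is visibly non-abelian). Then for solvable $\Gamma$ one walks down the derived series: the last nontrivial term is abelian and normal, acts nontrivially (faithfulness), hence homogeneously by part (2), hence is a nontrivial abelian member of $\mathcal{H}_\Rc$ — impossible. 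This is exactly the structure of \cite[Corollary 1.6]{MS13}, and I expect the write-up to be short.
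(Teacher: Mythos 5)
Your proposal is correct in substance, but note that the paper does not write this proof out at all: it simply invokes the preceding Proposition and the argument of \cite[Corollary 1.6]{MS13}. Your non-solvability argument is exactly that template: the last non-trivial term of the derived series is abelian, normal and (by faithfulness) acts non-trivially, hence by part $(2)$ of the Proposition and the simplicity of $\autR$ it is dense; its closure is abelian, contradicting the non-abelianness of $\autR$. For the icc half you take a more hands-on route than \cite{MS13} (which would argue that a non-trivial finite conjugacy class forces the centralizer of some $g\neq 1$ to have finite index, hence to be dense, hence $g$ to be central in the simple group $\autR$). Your direct construction works, but the one step you leave vague is the one that needs care: making the pairs $(v_i,w_i)$ pairwise disjoint is \emph{not} enough to force the conjugates $c_i=h_igh_i^{-1}$ to be pairwise distinct, since $c_j$ could still happen to send $v_i$ to $w_i$. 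The correct requirement is that, at stage $i$, you choose $w_i$ with the right adjacency type to $v_i$ and with $w_i\notin\{c_1(v_i),\dots,c_{i-1}(v_i)\}$; this is possible because, by property $(R)$, there are infinitely many vertices adjacent (and infinitely many non-adjacent) to $v_i$, so the finitely many forbidden points can be avoided. (One may even take $v_i=u$ for every $i$.) Then $c_i(v_i)=w_i\neq c_j(v_i)$ for $j<i$, so the conjugates are genuinely distinct. Also, the hypothesis $u\nsim gu$ is unnecessary: any $u$ with $gu\neq u$ works, provided $w_i$ is chosen with the same adjacency type to $v_i$ as $gu$ has to $u$. With that adjustment your argument is complete and arguably more self-contained than the centralizer argument, at the cost of being slightly longer.
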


\section{Actions of amalgamated free products on the Random Graph}

\noindent Let $\Gamma_1,\Gamma_2$ be two countable groups with a common finite subgroup $\Sigma$ and define $\Gamma=\Gamma_1\underset{\Sigma}{*}\Gamma_2$. Suppose that we have a faithful action $\Gamma\curvearrowright\GR$ and view $\Gamma<\autR$.
\vspace{.2cm}

\noindent Let $Z:=\{\alpha\in\autR\,:\,\alpha\sigma=\sigma\alpha\,\,\forall\sigma\in\Sigma\}$. Note that $Z$ is a closed subgroup of $\autR$, hence a Polish group. Moreover, for all $\alpha\in Z$, there exists a unique group homomorphism $\pi_{\alpha}\,:\,\Gamma\rightarrow\autR$ such that:
$$\pi_{\alpha}(g)=\left\{\begin{array}{lcl}g&\text{if}&g\in\Gamma_1,\\
\alpha^{-1}g\alpha&\text{if}&g\in\Gamma_2.\end{array}\right.$$

\noindent When $\Sigma$ is trivial, we have $Z=\autR$. In this section we prove the following result.

\begin{theorem}\label{ThmMain} If $\Gamma\curvearrowright\Rc$ is non-singular and has property $(F )$, $\Sigma$ is highly core-free w.r.t. $\Gamma_1,\Gamma_2\curvearrowright\Rc$ and $\Sigma\curvearrowright\Rc$ is free then the set $O=\{\alpha\in Z\,:\,\pi_{\alpha}\text{ is homogeneous and faithful}\}$ is a dense $G_\delta$ in $Z$. In particular, for every countably infinite groups $\Gamma_1,\Gamma_2$ we have $\Gamma_1*\Gamma_2\in\mathcal{H}_\Rc$ and, for any finite highly core-free subgroup $\Sigma<\Gamma_1,\Gamma_2$ we have $\Gamma_1\underset{\Sigma}{*}\Gamma_2\in\mathcal{H}_\Rc$.
\end{theorem}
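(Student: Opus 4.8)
The plan is to exhibit $O$ as a countable intersection of open dense subsets of the Polish group $Z$ and invoke Baire's theorem. Since $V(\GR)$ is countable, $P(\GR)$ is countable, and $\Gamma=\Gamma_1\underset{\Sigma}{*}\Gamma_2$ is countable; for $\varphi\in P(\GR)$ put $O_\varphi=\bigcup_{g\in\Gamma}\{\alpha\in Z:\pi_\alpha(g)|_{d(\varphi)}=\varphi\}$ and for $g\in\Gamma\setminus\{1\}$ put $N_g=\{\alpha\in Z:\pi_\alpha(g)\neq\id\}$. Then $O=\bigcap_{\varphi\in P(\GR)}O_\varphi\ \cap\ \bigcap_{g\in\Gamma\setminus\{1\}}N_g$, because $\pi_\alpha$ is homogeneous exactly when it realizes every $\varphi\in P(\GR)$ and faithful exactly when no $\pi_\alpha(g)$ is trivial. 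For a fixed $g$ the map $\alpha\mapsto\pi_\alpha(g)$ is continuous $Z\to\autR$ (it is a fixed word in $\alpha,\alpha^{-1}$ and elements of $\autR$, and the group operations of $S(V(\GR))$ are continuous), while $\{\beta:\beta|_{d(\varphi)}=\varphi\}$ is clopen and $\{\beta:\beta\neq\id\}$ is open in $\autR$; hence every $O_\varphi$ and every $N_g$ is open. So it suffices to prove density.

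Density of $O_\varphi$ is the heart of the matter. Fix $\alpha_0\in Z$ and a finite $F_0\subset V(\GR)$; enlarging $F_0$ (which only shrinks the neighbourhood) we may assume $F:=F_0$ is $\Sigma$-invariant and contains $A:=d(\varphi)$ and $B:=r(\varphi)$, so $\alpha_0|_F$ is a $\Sigma$-equivariant partial isomorphism. Since $\Sigma$ is highly core-free w.r.t. $\Gamma_1\curvearrowright\GR$, choose $g_1\in\Gamma_1$ with $g_1F\cap\Sigma F=\emptyset$, with $g_1u\nsim v$ and $\sigma g_1u\nsim g_1v$ ($\sigma\in\Sigma\setminus\{1\}$) for all $u,v\in F$, and with $\Sigma g_1u\cap\Sigma g_1v=\emptyset$ for $u\neq v$ in $F$; since $\Sigma$ is highly core-free w.r.t. $\Gamma_2\curvearrowright\GR$, choose $h\in\Gamma_2$ with the analogous properties for the finite set $\alpha_0(F)$. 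Using that $\Sigma\curvearrowright\GR$ is free and $\Gamma\curvearrowright\GR$ non-singular (so $\Sigma\curvearrowright\GR$ is non-singular), these conditions force the induced subgraph on $\Sigma g_1A$ to be a disjoint union of $|\Sigma|$ pairwise non-adjacent copies of $A$, disjoint from and non-adjacent to $F$, and likewise for $\Sigma h\alpha_0(B)$ relative to $\alpha_0(F)$. Hence the $\Sigma$-equivariant map $\psi$ defined on the $\Sigma$-invariant finite set $D:=F\sqcup\Sigma g_1A$ by $\psi|_F=\alpha_0|_F$ and $\psi(\sigma g_1u)=\sigma h\alpha_0(\varphi(u))$ ($\sigma\in\Sigma$, $u\in A$) is a $\Sigma$-equivariant partial isomorphism of $\GR$ (one uses that $\varphi:A\to B$ is a graph isomorphism and that $F$ is $\Sigma$-invariant). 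By Proposition \ref{PropExtension} applied with $\pi_1=\pi_2$ the free non-singular action $\Sigma\curvearrowright\GR$, the map $\psi$ extends to some $\alpha\in\autR$ with $\alpha\sigma=\sigma\alpha$ for all $\sigma\in\Sigma$, i.e. $\alpha\in Z$, and $\alpha|_F=\alpha_0|_F$. Setting $g:=h^{-1}g_1\in\Gamma$, for $u\in A$ we compute $\pi_\alpha(g)u=\alpha^{-1}h^{-1}\alpha g_1u=\alpha^{-1}h^{-1}\bigl(h\alpha_0(\varphi(u))\bigr)=\varphi(u)$; thus $\alpha\in O_\varphi$ and $\alpha|_{F_0}=\alpha_0|_{F_0}$.

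For the density of $N_g$: if $g$ is conjugate in $\Gamma$ into $\Gamma_1$ or into $\Gamma_2$, then $\pi_\alpha(g)\neq\id$ for every $\alpha\in Z$, since $\pi_\alpha$ restricts to the identity on $\Gamma_1$ and to a conjugate of the faithful action on $\Gamma_2$, and triviality of $\pi_\alpha(g)$ is invariant under conjugating $g$; so $N_g=Z$. Otherwise $g$ is conjugate to a reduced word of length $\geq 2$, and a (simpler) variant of the previous construction works: property $(F)$ together with freeness and non-singularity of $\Sigma\curvearrowright\GR$ yields a ``fresh'' vertex $y$ with clean $\Sigma$-orbit and, proceeding syllable by syllable along $g$, further fresh vertices together with prescribed $\alpha$-values, after which Proposition \ref{PropExtension} produces $\alpha\in Z$ extending $\alpha_0|_{F_0}$ with $\pi_\alpha(g)y\neq y$ (only one displaced point is needed, which is why this is less delicate than the homogeneity step). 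Hence every $O_\varphi$ and every $N_g$ is open and dense, so $O$ is a dense $G_\delta$ in the Baire space $Z$; in particular $O\neq\emptyset$. For the final assertions: when $\Sigma=\{1\}$ the action of $\Gamma=\Gamma_1*\Gamma_2$ on $\GR$ provided by Corollary \ref{CorRelActR} (with $l=1$) is non-singular, has property $(F)$, and every infinite subgroup of $\Gamma$—in particular $\Gamma_1$ and $\Gamma_2$—disconnects the finite sets, which says exactly that $\{1\}$ is highly core-free w.r.t. $\Gamma_1,\Gamma_2\curvearrowright\GR$; for a common finite highly core-free subgroup $\Sigma$, Corollary \ref{CorRelActR} likewise supplies an action satisfying all hypotheses of the theorem. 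In either case $O\neq\emptyset$ yields a faithful homogeneous action, so $\Gamma_1*\Gamma_2\in\mathcal{H}_{\Rc}$ and $\Gamma_1\underset{\Sigma}{*}\Gamma_2\in\mathcal{H}_{\Rc}$.

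The main obstacle is the construction in the second paragraph: choosing $g_1$ and $h$ in a coordinated way so that the single map $\psi$ is simultaneously $\Sigma$-equivariant, well defined, and an isomorphism of induced subgraphs, which is precisely what lets the equivariant extension Proposition \ref{PropExtension} deliver an $\alpha\in Z$ for which $\pi_\alpha(h^{-1}g_1)$ extends $\varphi$.
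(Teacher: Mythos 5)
Your proposal is correct and follows the paper's strategy: write $O$ as a countable intersection of the open sets $O_\varphi$ and $N_g$, and prove density of each using the highly core-free hypothesis together with the $\Sigma$-equivariant extension result (Proposition \ref{PropExtension}) for homogeneity, and property $(F)$ plus the same extension result for faithfulness. One genuine (small) difference: in the homogeneity step the paper realizes $\varphi$ as $\pi_\gamma(g_2hg_1)$ with $g_1,g_2\in\Gamma_1$, $h\in\Gamma_2$, forcing $\gamma=\alpha_0$ on $\Sigma F\sqcup\Sigma g_1d(\varphi)$ and prescribing $\gamma$ on a third block $\Sigma g_2^{-1}r(\varphi)$; you instead prescribe the new values of $\alpha$ directly on $\Sigma g_1 d(\varphi)$, which lets you use the shorter word $h^{-1}g_1$ and dispense with $g_2$ entirely --- your verification that $\psi$ is a $\Sigma$-equivariant isomorphism of induced subgraphs is sound (the key identity $\pi_\alpha(h^{-1}g_1)u=\alpha^{-1}\alpha_0(\varphi(u))=\varphi(u)$ uses that $\alpha$ and $\alpha_0$ agree on $r(\varphi)\subset F$, which you have arranged). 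The one place where you fall short of a complete argument is the density of $N_g$ for $g$ of reduced length $\geq 2$: the paper's Lemma \ref{LemFaithful} has real content here, namely the choice of the finite set $S$ of all partial subwords $\sigma g_{i_l}\cdots g_{i_1}$ (and their quotients) fed into property $(F)$ so that the $\Sigma$-orbits of the trajectory $g_{i_l}\cdots g_{i_1}x$ are pairwise disjoint, and the use of non-singularity to see that the map which is $\alpha_0$ on $\Sigma F$ and the identity on that trajectory is a graph isomorphism; your sketch points in the right direction but does not supply these details.
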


\begin{proof}
We separate the proof in two lemmas. 

\begin{lemma}\label{LemHomogeneous}
If $\Sigma\curvearrowright\Rc$ is free and non-singular and $\Sigma$ is highly core-free w.r.t. $\Gamma_1,\Gamma_2\curvearrowright\GR$ then the set $U=\{\alpha\in Z\,:\,\pi_{\alpha}\text{ is homogeneous}\}$ is a dense $G_\delta$ in $Z$.
\end{lemma}

\begin{proof}
Since $U=\cap_{\varphi\in P(\GR)}U_\varphi$, where $U_\varphi=\{\alpha\in Z\,:\,\exists g\in\Gamma\text{ such that }\pi_{\alpha}(g)\vert_{d(\varphi)}=\varphi\}$ is obviously open, it suffices to show that $U_\varphi$ is dense for all $\varphi\in P(\GR)$. Let $\varphi\in P(\GR)$, $\alpha\in Z$ and $F\subset V(\GR)$ a finite subset. It suffices to show that there exists $\gamma\in Z$ and $g\in\Gamma$ such that $\gamma\vert_F=\alpha\vert_F$ and $\pi_\gamma(g)\vert_{d(\varphi)}=\varphi$. Since $\Sigma$ is highly core-free w.r.t. $\Gamma_1\curvearrowright\GR$ and $\Sigma F$ and $d(\varphi)$ are both finite sets, there exists $g_1\in\Gamma_1$ such that $g_1d(\varphi)\cap \Sigma F=\emptyset$, $g_1x\nsim u$ and $\sigma g_1x\nsim g_1x'$ for all $u\in \Sigma F$, $x,x'\in d(\varphi)$, $\sigma\in\Sigma\setminus \{1\}$ and $\Sigma g_1x\cap\Sigma g_1x'=\emptyset$ for all $x,x'\in d(\varphi)$ with $x\neq x'$. Since $\Sigma$ is highly core-free w.r.t. $\Gamma_1\curvearrowright\GR$, there exists also $g_2\in\Gamma_1$ such that $g_2^{-1}r(\varphi)\cap(\Sigma F\sqcup \Sigma g_1d(\varphi))=\emptyset$ and $g_2^{-1}y\nsim u$ , $\sigma g_2^{-1}y\nsim g_2^{-1}y'$ for all $u\in \Sigma F\sqcup \Sigma g_1d(\varphi)$, $y,y'\in r(\varphi)$, $\sigma\in\Sigma\setminus \{1\}$ and $\Sigma g_2^{-1}y\cap\Sigma g_2^{-1}y'=\emptyset$ for all $y,y'\in r(\varphi)$ with $y\neq y'$. Define $F'=\alpha(\Sigma F)\sqcup\Sigma\alpha(g_1d(\varphi))$. Since $\Sigma$ is highly core-free w.r.t. $\Gamma_2\curvearrowright\GR$, there exists $h\in\Gamma_2$ such that $h F'\cap F'=\emptyset$ and for all $u,v\in F'$ and $\sigma\in\Sigma\setminus \{1\}$, $hu\nsim v$, $\sigma hu\nsim hv$ and $\Sigma hu\cap\Sigma h u'=\emptyset$ for all $u,u'\in F'$ with $u\neq u'$. Define $A=(\Sigma F)\sqcup\Sigma g_1d(\varphi)\sqcup\Sigma g_2^{-1}r(\varphi)$ and $B=\alpha(\Sigma F)\sqcup\alpha(\Sigma g_1d(\varphi))\sqcup\Sigma h\alpha(g_1d(\varphi))$. Note that $\Sigma A= A$ and, since $\alpha\in Z$, $\Sigma B=B$. Since the $\Sigma$ action is free, we can define a bijection $\gamma_0\,:\,A\rightarrow B$ by $\gamma_0(u)=\alpha(u)$ for $u\in\Sigma F\sqcup\Sigma g_1d(\varphi)$ and $\gamma_0(\sigma g_2^{-1}\varphi(x))=\sigma h\alpha g_1x$, for all $x\in d(\varphi)$ and $\sigma\in\Sigma$. By construction, $\gamma_0$ is a partial isomorphism and $\gamma_0\sigma=\sigma\gamma_0$ for all $\sigma\in\Sigma$. By Proposition \ref{PropExtension} there exists an extension $\gamma\in Z$ of $\gamma_0$. Note that $\gamma\vert_F=\alpha\vert_F$ moreover, with $g=g_2hg_1\in \Gamma$ we have, for all $x\in d(\varphi)$, $\pi_\gamma(g)x=g_2\gamma^{-1}h\gamma g_1x=g_2\gamma^{-1}h\alpha(g_1x)=g_2g_2^{-1}\varphi(x)=\varphi(x)$.
\end{proof}

\begin{lemma}\label{LemFaithful}
If $\Sigma\curvearrowright\Rc$ is free, $\Gamma\curvearrowright \mathcal{R}$ is non-singular and has property $(F )$ then the set $V=\{\alpha\in Z\,:\,\pi_{\alpha}\text{ is faithful}\}$ is a dense $G_\delta$ in $Z$.
\end{lemma}

\begin{proof}
Writing $V=\cap_{g\in\Gamma\setminus \{1\}}V_g$, where $V_g=\{\alpha\in Z\,:\,\pi_{\alpha}(g)\neq\id\}$ is obviously open, it suffices to show that $V_g$ is dense for all $g\in\Gamma\setminus \{1\}$. If $g\in \Gamma_k\setminus \{1\}$ ($k=1,2$) then it is easy to see that $V_g=Z$. Hence it suffices to show that $V_g$ is dense for all $g$ reduced of length at least $2$. Write $g=g_{i_n}\dots g_{i_1}$, where $n\geq 2$ and $g_{i_k}\in\Gamma_{i_k}\setminus \Sigma$ reduced expression for $g$. Fix $\alpha\in Z$ and $F\subset V(\Rc)$ a finite subset. Define the finite sets
$$\widetilde{F}:=\Sigma F\cup\alpha(\Sigma F)\cup\left(\bigcup_{l=1}^n(g_{i_l}\dots g_{i_1})^{-1}\left(\Sigma F\cup\alpha(\Sigma F)\right)\right)\subset V(\Rc),$$
and
$$S:=\{\sigma g_{i_l}\dots g_{i_1}\,:\,1\leq l\leq n,\sigma\in\Sigma\}\cup\{g_{i_1}^{-1}\dots g_{i_l}^{-1}\sigma g_{i_k}\dots g_{i_1}\,:\,1\leq l< k\leq n,\sigma\in\Sigma\}\subset\Gamma\setminus\{1\}.$$
Using property $(F)$ for $\Gamma\curvearrowright\Rc$ we find a vertex $x\in V(\Rc)\setminus \widetilde{F}$ such that $x\nsim u$ for all $u\in\widetilde{F}$ and $gx\neq x$ for all $g\in S$. In particular, the sets $\Sigma F$ (resp. $\alpha(\Sigma F)$) and $\Sigma x$, $\Sigma g_{i_l}\dots g_{i_1}x$ for $1\leq l\leq n$ are pairwise distincts and, since $\Gamma\curvearrowright\Rc$ is non-singular, the only vertices on the induced subgraph on $Y:=\Sigma F\sqcup(\bigsqcup _{l=1}^n\Sigma g_{i_l}\dots g_{i_1}x)\sqcup\Sigma x$ (resp. $Y':=\Sigma\alpha(F)\sqcup(\bigsqcup _{l=1}^n\Sigma g_{i_l}\dots g_{i_1}x)\sqcup\Sigma x$) are the ones with extremities in $\Sigma F$ (resp. $\Sigma\alpha(F)$). Hence, the bijection $\gamma_0\,:\, Y\rightarrow Y'$ defined by $\gamma_0\vert_{\Sigma F}=\alpha\vert_{\Sigma F}$ and $\gamma_0\vert_{Y\setminus\Sigma F}=\id$ is a graph isomorphism between the induced subgraphs. By construction and since $\alpha\in Z$ we have $\gamma_0\sigma=\sigma\gamma_0$ for all $\sigma\in\Sigma$. By Proposition \ref{PropExtension} there exists an extension $\gamma\in Z$ of $\gamma_0$. Then $\gamma\vert_F=\alpha\vert_F$ and $\pi_\gamma(g_{i_n}\dots g_{i_1})x=g_{i_n}\dots g_{i_1}x\neq x$, since $g_{i_n}\dots g_{i_1}\in S$. Hence $\gamma\in V_g$.
\end{proof}

\noindent\textit{End of the proof of the Theorem.} 
The first part of the Theorem follows from Lemmas \ref{LemHomogeneous} and \ref{LemFaithful} since $O=U\cap V$. The last part follows from the first part and Corollary \ref{CorRelActR}.
\end{proof}

\noindent We have a similar result when only one of the factors in the free product is infinite.

\begin{theorem}\label{ThmOneFiniteFactor}
Suppose that $\Gamma_2$ is finite such that $[\Gamma_2:\Sigma]\geq 2$. If $\Gamma\curvearrowright\Rc$ is non-singular and has property $(F )$, $\Sigma$ is highly core-free w.r.t. $\Gamma_1\curvearrowright\Rc$ and the action $\Gamma_2\curvearrowright\Rc$ is free then the set $$O=\{\alpha\in Z\,:\,\pi_{\alpha}\text{ is homogeneous and faithful}\}$$
is a dense $G_\delta$ in $Z$. In particular, for every countably infinite group $\Gamma_1$, for every finite non-trivial group $\Gamma_2$ we have $\Gamma_1*\Gamma_2\in\mathcal{H}_\Rc$ and, for any common finite subgroup $\Sigma<\Gamma_1,\Gamma_2$ such that $\Sigma$ is highly core-free in $\Gamma_1$ and $[\Gamma_2:\Sigma]\geq 2$ we have $\Gamma_1\underset{\Sigma}{*}\Gamma_2\in\mathcal{H}_\Rc$.
\end{theorem}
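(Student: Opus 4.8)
The plan is to mimic the proof of Theorem~\ref{ThmMain} almost verbatim, replacing the role played by "$\Sigma$ highly core-free w.r.t.\ $\Gamma_2\curvearrowright\GR$" (which is unavailable since $\Gamma_2$ is finite and therefore does not disconnect the finite sets) by the explicit combinatorial freedom afforded by the hypotheses "$\Gamma_2\curvearrowright\GR$ is free" and "$[\Gamma_2:\Sigma]\geq 2$". As before, I would write $O=U\cap V$ where $U=\{\alpha\in Z:\pi_\alpha\text{ is homogeneous}\}$ and $V=\{\alpha\in Z:\pi_\alpha\text{ is faithful}\}$, and prove each is a dense $G_\delta$ in $Z$ via the two corresponding lemmas. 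The last sentence of the statement then follows from the first part together with Corollary~\ref{CorRelActR}, exactly as in the previous theorem: the hypothesis that $\Gamma_2\curvearrowright\GR$ be free is provided by the parametrized construction with parameter $l=|\Sigma|$ (or $l=|\Gamma_2|$ in the trivial-$\Sigma$ case), and highly core-freeness of $\Sigma$ in $\Gamma_1$ transfers to the action by that corollary.

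For the faithfulness lemma, the argument of Lemma~\ref{LemFaithful} goes through essentially unchanged: a reduced word $g=g_{i_n}\cdots g_{i_1}$ with $n\geq 2$ and $g_{i_k}\in\Gamma_{i_k}\setminus\Sigma$ has the property that the finitely many group elements appearing in the auxiliary set $S$ are all non-trivial, so property $(F)$ for $\Gamma\curvearrowright\GR$ furnishes a vertex $x$ outside a prescribed finite set, non-adjacent to it, and moved by every element of $S$; one then builds $\gamma_0$ on $Y\to Y'$ as before (using freeness of $\Sigma$, which still holds since $\Sigma<\Gamma_2$ acts freely) and extends via Proposition~\ref{PropExtension}. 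Nothing here used that $\Gamma_2$ is infinite, so this part is genuinely free.

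The real work is in the homogeneity lemma. In the original proof, given $\varphi\in P(\GR)$, $\alpha\in Z$ and finite $F$, one produces $g_1\in\Gamma_1$ spreading $d(\varphi)$ away from $F$, then $g_2\in\Gamma_1$ spreading $r(\varphi)$ away from $F\cup g_1 d(\varphi)$, and crucially an $h\in\Gamma_2$ moving the set $F'=\alpha(\Sigma F)\sqcup\Sigma\alpha(g_1 d(\varphi))$ off itself; one finally sets $g=g_2 h g_1$ and checks $\pi_\gamma(g)|_{d(\varphi)}=\varphi$. When $\Gamma_2$ is finite we cannot choose such an $h$, so instead I would absorb the displacement into the $\Gamma_1$-side: choose $g_1,g_2\in\Gamma_1$ (using that $\Sigma$ is highly core-free w.r.t.\ $\Gamma_1\curvearrowright\GR$) so that $\Sigma g_1 d(\varphi)$, $\Sigma g_2^{-1}r(\varphi)$, $\Sigma F$ and $\alpha(\Sigma F)$ are all pairwise disjoint, mutually non-adjacent, and $\Sigma$-orbit-separated, \emph{and} so that applying any fixed nontrivial coset representative of $\Sigma$ in $\Gamma_2$ does not spoil these disjointness relations --- here one uses that $[\Gamma_2:\Sigma]\geq 2$, so there is at least one $t\in\Gamma_2\setminus\Sigma$, and that $\Gamma_2\curvearrowright\GR$ is free to guarantee the $\Sigma$-packets $\Sigma t\alpha(g_1 d(\varphi))$ etc.\ are honest free orbits. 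Then take $g=g_2 t g_1$ with $t\in\Gamma_2\setminus\Sigma$, define $\gamma_0$ on $A\to B$ matching $\alpha$ on $\Sigma F$ and sending $\sigma g_2^{-1}\varphi(x)\mapsto \sigma t g_1 x$, verify it is a $\Sigma$-equivariant graph isomorphism between induced subgraphs (non-singularity of $\Gamma\curvearrowright\GR$ kills the cross-edges), and extend by Proposition~\ref{PropExtension}. The main obstacle is precisely arranging this: the element $t\in\Gamma_2$ no longer gives us any control over the geometry of $\GR$, so all of the separation must be extracted in advance from the single $\Gamma_1$-move, which is why we must bake into the choice of $g_1$ (and $g_2$) the requirement that the $t$-translates of the relevant packets remain suitably generic --- a finite list of conditions, each achievable because $t$ is a fixed group element acting freely and highly core-freeness of $\Sigma$ in $\Gamma_1$ lets us avoid any prescribed finite set.
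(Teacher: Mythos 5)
Your overall architecture is the same as the paper's: reuse Lemma \ref{LemFaithful} verbatim for faithfulness (correct, and for the right reasons), prove a modified homogeneity lemma, and deduce the last sentence from Corollary \ref{CorRelActR}. You have also correctly located the difficulty: the element of $\Gamma_2$ can no longer be chosen to displace a prescribed finite set. However, your proposed resolution of that difficulty does not work. You want to choose $g_1,g_2\in\Gamma_1$ so that the packet $\Sigma t\alpha(g_1d(\varphi))$ (for a \emph{fixed} $t\in\Gamma_2\setminus\Sigma$) is disjoint from, and non-adjacent to, the packets $\Sigma\alpha(g_1d(\varphi))$ and $\alpha(\Sigma F)$, and has the right internal graph structure, claiming each such condition is ``achievable because \dots highly core-freeness of $\Sigma$ in $\Gamma_1$ lets us avoid any prescribed finite set.'' But these are not avoid-a-prescribed-finite-set conditions: they constrain the interaction between a set and its image under the fixed automorphism $t$ (and under elements such as $t^{-1}\sigma t\in\Gamma_2\setminus\Sigma$), and both sides move together as $g_1$ varies. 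For instance, you need $\sigma t\alpha(g_1x)\nsim\sigma' t\alpha(g_1x')$ for $\sigma\neq\sigma'$ in $\Sigma$, i.e.\ $\alpha(g_1x)\nsim t^{-1}\sigma^{-1}\sigma' t\,\alpha(g_1x')$, and $t^{-1}\sigma^{-1}\sigma' t$ lies in $\Gamma_2$, not in $\Sigma$; similarly you need $\tau t\alpha(g_1x)\neq\alpha(g_1x')$ and $\tau t\alpha(g_1x)\nsim\alpha(g_1x')$ for $x\neq x'$. Highly core-freeness of $\Sigma$ w.r.t.\ $\Gamma_1\curvearrowright\GR$ controls only $\Sigma$-translates of $g_1$-images relative to sets fixed in advance; non-singularity handles the diagonal case $x=x'$; freeness of $\Gamma_2\curvearrowright\GR$ only rules out fixed points. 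None of these hypotheses gives you any control over where $t$ sends $\alpha(g_1d(\varphi))$, so the required genericity of the $t$-translate cannot be extracted from the choice of $g_1$.

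The paper's proof supplies exactly the missing device: instead of using $\alpha(g_1d(\varphi))$ (or $g_1d(\varphi)$) as the intermediate configuration, it builds \emph{fresh} vertices $z_1,\dots,z_n$ directly in $\GR$ by an inductive application of property $(R)$ (the Claim inside Lemma \ref{LemHomogeneousOneFiniteFactor}), choosing each $z_{l+1}$ in a set $\GR_{U,V}$ whose forbidden part $V$ already contains all $\Gamma_2$-translates $hz_i$ of the previously chosen vertices together with $\Gamma_2\alpha(F)$. This controls the entire (finite) $\Gamma_2$-orbits of the $z_i$ in advance: the orbits are pairwise disjoint, carry no unwanted edges, and the $z_i$ reproduce the graph structure of $d(\varphi)$. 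One then sets $\gamma_0(\sigma g_1x_i)=\sigma z_i$ and $\gamma_0(\sigma g_2^{-1}y_i)=\sigma hz_i$ for a fixed $h\in\Gamma_2\setminus\Sigma$, extends by Proposition \ref{PropExtension}, and computes $\pi_\gamma(g_2hg_1)x_i=g_2\gamma^{-1}hz_i=y_i$. Without this step (or an equivalent one) your homogeneity lemma has a genuine gap.
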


\begin{proof}
We first prove the analogue of Lemma \ref{LemHomogeneous}.

\begin{lemma}\label{LemHomogeneousOneFiniteFactor}
If $\Sigma$ is highly core-free w.r.t. $\Gamma_1\curvearrowright\GR$, $\Gamma_2$ is finite and $\Gamma_2\curvearrowright\GR$ is free and non-singular such that $ [\Gamma_2:\Sigma ] \geq 2$, then the set $U=\{\alpha\in Z\,:\,\pi_{\alpha}\text{ is homogeneous}\}$ is a dense $G_\delta$ in $Z$.
\end{lemma}

\begin{proof}
Since $U=\cap_{\varphi\in P(\GR)}U_\varphi$, where $U_\varphi=\{\alpha\in Z\,:\,\exists g\in\Gamma\text{ such that }\pi_{\alpha}(g)\vert_{d(\varphi)}=\varphi\}$ is obviously open, it suffices to show that $U_\varphi$ is dense for all $\varphi\in P(\GR)$. Let $\varphi\in P(\GR)$, $\alpha\in Z$ and $F\subset V(\GR)$ a finite subset. Since $\Sigma$ is highly core-free w.r.t. $\Gamma_1\curvearrowright\GR$, there exists $g_1\in\Gamma_1$ such that $g_1d(\varphi)\cap \Sigma F=\emptyset$, $g_1x\nsim u$ and $\sigma g_1x\nsim g_1x'$ for all $u\in \Sigma F$, $x,x'\in d(\varphi)$, $\sigma\in\Sigma\setminus \{1\}$ and $\Sigma g_1x\cap\Sigma g_1x'=\emptyset$ for all $x,x'\in d(\varphi)$ with $x\neq x'$. There exists also $g_2\in\Gamma_1$ such that $g_2^{-1}r(\varphi)\cap(\Sigma F\sqcup \Sigma g_1d(\varphi))=\emptyset$ and $g_2^{-1}y\nsim u$ , $\sigma g_2^{-1}y\nsim g_2^{-1}y'$ for all $u\in \Sigma F\sqcup \Sigma g_1d(\varphi)$, $y,y'\in r(\varphi)$, $\sigma\in\Sigma\setminus \{1\}$ and $\Sigma g_2^{-1}y\cap\Sigma g_2^{-1}y'=\emptyset$ for all $y,y'\in r(\varphi)$ with $y\neq y'$. In the sequel we write $d(\varphi)=\{x_1,\dots,x_n\}$.

\vspace{0.2cm}

\noindent \textbf{Claim.}\textit{ There exists pairwise distinct vertices $z_1,\dots,z_n\in V(\GR)\setminus \Gamma_2\alpha(F)$ such that
\begin{itemize}
\item $\Gamma_2 z_i\cap \Gamma_2 z_j=\emptyset$, $\forall 1\leq i\neq j\leq n$;
\item $\Sigma hz_i\cap\Sigma z_i=\emptyset$, $\forall h\in\Gamma_2\setminus\Sigma$, $\forall$ $1\leq i\leq n$;
\item $hz_i\nsim z_j$, $\forall h\in \Gamma_2\setminus \{1\}$, $\forall 1\leq i,j\leq n$;
\item $x_i\sim x_j\Leftrightarrow z_i\sim z_j$, $\forall 1\leq i,j\leq n$;
\item $z_i\nsim u$, $\forall u\in \Gamma_2 \alpha(F)$, $\forall 1\leq i\leq n$.  
\end{itemize}}

\noindent\textit{Proof of the Claim.} We define inductively the vertices $z_1,\dots,z_n\in V(\GR)\setminus \Gamma_2\alpha(F)$. For $n=1$ we take $z_1\in\mathcal{R}_{\emptyset,\Gamma_2\alpha(F)}\setminus \Gamma_2\alpha(F)$. Then $z_1\notin \Gamma_2\alpha(F)$ and, for all $u\in \Gamma_2\alpha(F)$ and for all $h\in \Gamma_2$, $u\nsim z_1$ and $hz_1\nsim z_1$ since $\Gamma_2\curvearrowright\GR$ is non-singular. Moreover, since $\Gamma_2\curvearrowright\Rc$ is free, we have $hz_1\neq\sigma z_i$ for all $h\in\Gamma_2\setminus \Sigma$ and all $\sigma\in\Sigma$.
\vspace{0.2cm}

\noindent Suppose that, for a given $1\leq l\leq n-1$, we have pairwise distinct vertices $z_1,\dots,z_l\in V(\GR)\setminus \Gamma_2\alpha(F)$ such that $\Gamma_2 z_i\cap \Gamma_2 z_j=\emptyset$, $\forall 1\leq i\neq j\leq l$, $\Sigma hz_i\cap\Sigma z_i=\emptyset$, $\forall$ $h\in\Gamma_2\setminus \Sigma$, $\forall$ $1\leq i\leq l$, $hz_i\nsim z_j$, $\forall h\in \Gamma_2\setminus \{1\}$, $x_i\sim x_j\Leftrightarrow z_i\sim z_j$ and $z_i\nsim u$, $\forall u\in \Gamma_2 \alpha(F)$ $\forall 1\leq i,j\leq l$. Define 
$$U=\{z_i\,:\,1\leq i\leq l\text{ s.t. }x_i\sim x_{l+1}\}$$and 
$$V=\{z_i\,:\,1\leq i\leq l\text{ s.t. }x_i\nsim x_{l+1}\}\sqcup \{hz_i :\,1\leq i\leq l, h\in \Gamma_2\setminus\{1\} \}\sqcup \Gamma_2\alpha( F).$$
Since $\mathcal{R}_{U,V}$ is infinite and $\Gamma_2$ is finite, we may take
$$z_{l+1}\in\GR_{U,V}\setminus \left(\Gamma_2 \alpha( F)\sqcup\sqcup_{i=1}^l\Gamma_2 z_i\right).$$
Then, for all $u\in \Gamma_2\alpha(F)$, $u\nsim z_{l+1}$ and, for all $1\leq i\leq l$, $h\in\Gamma_2\setminus\{1\}$, $x_i\sim x_{l+1}\Leftrightarrow z_i\sim z_{l+1}$, $h z_i\nsim z_{l+1}$ and, for all $i\neq l+1$, $\Gamma_2 z_i\cap\Gamma_2 z_{l+1}=\emptyset$. Also, since $\Gamma_2\curvearrowright\Rc$ is free, we have $\Sigma hz_{l+1}\cap\Sigma z_{l+1}=\emptyset$ for all $h\in\Gamma_2\setminus \Sigma$. This completes the proof.

\vspace{0.2cm}

\noindent\textit{End of the proof of the Lemma.} Write $d(\varphi)=\{x_1,\dots,x_n\}$ and define, for $1\leq k\leq n$, $y_k=\varphi(x_k)$. Let $z_1,\dots z_n$ be the elements obtained by the Claim. Take $h\in \Gamma_2\setminus\Sigma$. Then the sets $\alpha(\Sigma F)$, $\Sigma z_i$ for $1\leq i\leq n$, and $\Sigma hz_i$ for $1\leq i\leq n$ are pairwise disjoint. Moreover, $u\nsim \sigma z_i$, $u\nsim \sigma h z_i$ and $\sigma' z_i\nsim \sigma h z_j$ for all $u\in \alpha(\Sigma F)$, for all $\sigma, \sigma'\in \Sigma$ and for all $1\leq i, j\leq n$. Define 
$$A=\Sigma F\sqcup \left(\sqcup_{i=1}^n\Sigma g_1x_i\right)\sqcup\left(\sqcup_{i=1}^n\Sigma g_2^{-1}y_i\right)\text{ and }B=\alpha(\Sigma F)\sqcup\left( \sqcup_{i=1}^n\Sigma z_i\right)\sqcup\left( \sqcup_{i=1}^n\Sigma hz_i\right)$$
 and consider the induced graph structure on $A$ and $B$. Note that $\Sigma A= A$ , $\Sigma B=B$ and the only vertices in $A$ (resp. $B$) are the ones with extremities in $\Sigma F$ (resp. $\alpha(\Sigma F)$). Since $\Sigma\curvearrowright\Rc$ is free, we may define a bijection $\gamma_0\,:\,A\rightarrow B$ by $\gamma_0(u)=\alpha(u)$ for $u\in \Sigma F$ and $\gamma_0(\sigma g_1x_i)=\sigma z_i$, $\gamma_0(\sigma g_2^{-1}y_i)=\sigma hz_i$ for all $1\leq i\leq n$ and for all $\sigma\in\Sigma$ which is a graph isomorphism satisfying $\gamma_0\sigma=\sigma\gamma_0$ for all $\sigma\in\Sigma$. By Proposition \ref{PropExtension}, there exists an extension $\gamma\in Z$ of $\gamma_0$. Then $\gamma\vert_F=\alpha\vert_F$ and with $g=g_2hg_1\in \Gamma$ we have, for all $1\leq i\leq n$,
$$\pi_\gamma(g)x_i=g_2\gamma^{-1}h\gamma g_1x_i=g_2\gamma^{-1}hz_i=g_2g_2^{-1}y_i=y_i.$$
\end{proof}

\noindent\textit{End of the proof of the Theorem \ref{ThmOneFiniteFactor}.} 
The first assertion of the Theorem follows from Lemmas \ref{LemHomogeneousOneFiniteFactor} and \ref{LemFaithful}. The last part follows from the first part and Corollary \ref{CorRelActR}, where the group $\Sigma$ in Corollary \ref{CorRelActR} is actually our group $\Gamma_2$, the group $\Sigma'$ is our group $\Sigma$ and the group $H'$ is our group $\Gamma_1$.\end{proof}

\section{Actions of HNN extensions on the Random Graph}

\noindent Let $\Sigma<H$ be a finite subgroup of a countable group $H$ and $\theta\,:\,\Sigma\rightarrow H$ be an injective group homomorphism. Define $\Gamma={\rm HNN}(H,\Sigma,\theta)$ the HNN-extension and let $t\in\Gamma$ be the ``stable letter'' i.e. $\Gamma$ is the universal group generated by $\Sigma$ and $t$ with the relations $t\sigma t^{-1}=\theta(\sigma)$ for all $\sigma\in\Sigma$. For $\epsilon\in\{-1,1\}$, we write
$$\Sigma_\epsilon:=\left\{\begin{array}{lcl}\Sigma&\text{if}&\epsilon=1,\\ \theta(\Sigma)&\text{if}&\epsilon=-1.\end{array}\right.$$

\noindent Suppose that we have a faithful action $\Gamma\curvearrowright\Rc$ and view $\Gamma<\autR$. Define the closed (hence Polish space) subset $Z=\{\alpha\in\autR\,:\,\theta(\sigma)=\alpha\sigma\alpha^{-1}\text{ for all }\sigma\in\Sigma\}\subset\autR$ and note that it is non-empty (since $t\in Z$). By the universal property of $\Gamma$, for each $\alpha\in Z$ there exists a unique group homomorphism $\pi_\alpha\,:\,\Gamma\rightarrow\autR$ such that
$$\pi_\alpha\vert_H=\id_H\text{ and }\pi_\alpha(t)=\alpha.$$

\noindent In this section we prove the following result.

\begin{theorem}\label{ThmHNN}
If $\Gamma\curvearrowright\GR$ is non-singular, has property $(F )$, $\Sigma_\epsilon\curvearrowright\Rc$ is free and $\Sigma_\epsilon$ is highly core-free w.r.t. $H\curvearrowright\GR$ for all $\epsilon\in\{-1,1\}$ then the set
$$O=\{\alpha\in\autR\,:\,\pi_\alpha\text{ is faithful and homogeneous}\}$$
is a dense $G_\delta$ in $Z$. In particular, for any finite subgroup $\Sigma$ of an infinite countable group $H$ such that $\Sigma_\epsilon<H$ is highly core-free for all $\epsilon\in\{-1,1\}$, we have $\text{HNN}(H,\Sigma,\theta)\in\mathcal{H}_\Rc$.
\end{theorem}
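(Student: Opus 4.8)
\emph{Overall strategy.} I would run the proof of Theorem~\ref{ThmMain} almost verbatim, with the amalgam replaced by $\Gamma=\HNN(H,\Sigma,\theta)$ and the two copies of $\Sigma$ sitting in $\Gamma_1,\Gamma_2$ replaced by the two (non-conjugate) subgroups $\Sigma_1=\Sigma$ and $\Sigma_{-1}=\theta(\Sigma)$ of $H$; each time an automorphism is produced by extending a finite partial isomorphism I would invoke Proposition~\ref{PropExtension} with the pair of actions $(\pi_1,\pi_2)=(\Sigma\curvearrowright\GR,\ \theta(\Sigma)\curvearrowright\GR)$, which are free and non-singular by hypothesis. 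Write $O=U\cap V$ with $U=\{\alpha\in Z:\pi_\alpha\ \text{homogeneous}\}=\bigcap_{\varphi\in P(\GR)}U_\varphi$, $U_\varphi=\{\alpha\in Z:\exists g\in\Gamma,\ \pi_\alpha(g)|_{d(\varphi)}=\varphi\}$, and $V=\{\alpha\in Z:\pi_\alpha\ \text{faithful}\}=\bigcap_{g\in\Gamma\setminus\{1\}}V_g$, $V_g=\{\alpha\in Z:\pi_\alpha(g)\neq\id\}$; the sets $U_\varphi,V_g$ are manifestly open, so it suffices to prove each of them dense in $Z$, after which $O=U\cap V$ is a dense $G_\delta$. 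For the ``in particular'' I would take for $\Gamma\curvearrowright\GR$ the action furnished by Corollary~\ref{CorRelActR} applied to the finite subgroup $\Sigma$ of the infinite group $\Gamma$: since $|\theta(\Sigma)|=|\Sigma|$, the same parametrized construction (Proposition~\ref{PropRelRandExt}(7)) also makes $\theta(\Sigma)\curvearrowright\GR$ free, it is non-singular, faithful and has property~$(F)$, and because $\Sigma_\epsilon<H$ is highly core-free by hypothesis, Corollary~\ref{CorRelActR} (with the intermediate subgroups $\Sigma_\epsilon<H<\Gamma$) gives that $\Sigma_\epsilon$ is highly core-free with respect to $H\curvearrowright\GR$; thus the first part applies, $O$ is a non-empty dense $G_\delta$, and any $\alpha\in O$ witnesses $\HNN(H,\Sigma,\theta)\in\mathcal{H}_\Rc$.

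\emph{Density of $U_\varphi$.} Fix $\varphi\in P(\GR)$, $\alpha\in Z$ and a finite $F\subset V(\GR)$; I must produce $\gamma\in Z$ with $\gamma|_F=\alpha|_F$ and $\pi_\gamma(g)|_{d(\varphi)}=\varphi$ for some $g\in\Gamma$. Here the HNN relation makes things \emph{easier} than for the amalgam: take $g=h_2th_1$ with $h_1,h_2\in H$, so that $\pi_\gamma(g)=h_2\gamma h_1$ and the requirement becomes $\gamma(h_1x)=h_2^{-1}\varphi(x)$ for $x\in d(\varphi)$. Using that $\Sigma$ is highly core-free w.r.t. $H\curvearrowright\GR$, choose $h_1\in H$ spreading out $d(\varphi)\cup\Sigma F$, so that the orbits $\Sigma h_1x$ ($x\in d(\varphi)$) are pairwise disjoint, disjoint from $\Sigma F$, carry no edges to $\Sigma F$, and the induced subgraph on $\Sigma h_1d(\varphi)$ is the disjoint union over $\Sigma$ of copies of the induced subgraph on $d(\varphi)$; using that $\theta(\Sigma)$ is highly core-free w.r.t. $H\curvearrowright\GR$, choose $h_2\in H$ doing the same for $h_2^{-1}(r(\varphi))\cup\alpha(\Sigma F)$ on the range side. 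Put $A=\Sigma F\sqcup\Sigma h_1d(\varphi)$, $B=\alpha(\Sigma F)\sqcup\theta(\Sigma)h_2^{-1}r(\varphi)$ and define $\gamma_0:A\to B$ by $\gamma_0|_{\Sigma F}=\alpha|_{\Sigma F}$ and $\gamma_0(\sigma h_1x)=\theta(\sigma)h_2^{-1}\varphi(x)$; freeness of $\Sigma\curvearrowright\GR$ and $\theta(\Sigma)\curvearrowright\GR$ together with injectivity of $\theta$ make this well defined and bijective, the disjointness and edge conditions make it a graph isomorphism, and it intertwines $\Sigma$ with $\theta(\Sigma)$. By Proposition~\ref{PropExtension} it extends to $\gamma\in\autR$ with $\gamma\sigma\gamma^{-1}=\theta(\sigma)$, i.e. $\gamma\in Z$; moreover $\gamma|_F=\alpha|_F$ and $\pi_\gamma(h_2th_1)x=h_2\,\gamma(h_1x)=h_2h_2^{-1}\varphi(x)=\varphi(x)$ for $x\in d(\varphi)$.

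\emph{Density of $V_g$} is where I expect the real work. If $g\in H\setminus\{1\}$ then $V_g=Z$, since $\Gamma\curvearrowright\GR$ is faithful. So fix $g$ with a reduced HNN form $g=h_0t^{\epsilon_1}h_1\cdots t^{\epsilon_n}h_n$, $n\geq1$, and fix $\alpha\in Z$, $F$ finite. Let $g^{(0)}=1$ and let $g^{(j)}\in\Gamma$ be the successive suffixes of this word, so that the vertices $r_j=g^{(j)}x$ trace the computation of $gx$ in the given action and the $n$ occurrences of $t^{\pm1}$ link consecutive pairs $(r_a,r_b)$ with $r_b=t^{\pm1}r_a$. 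The strategy (the HNN analogue of making $\gamma=\id$ on the path in Lemma~\ref{LemFaithful}, which here would fail, e.g. for $g=t^2$) is to force $\gamma$ to coincide with the stable letter $t\in Z$ along this path: using property~$(F)$, pick a vertex $x$ with $x\notin\widetilde F$, $x\nsim\widetilde F$ and $sx\neq x$ for all $s$ in a finite set $S\subset\Gamma\setminus\{1\}$, where $\widetilde F$ collects $\Sigma F$, $\alpha(\Sigma F)$ and the relevant translates $(g^{(j)})^{-1}(\Sigma F)$, $(g^{(j)})^{-1}\theta(\Sigma)\alpha(F)$, and $S$ collects $g$ together with the non-trivial elements $(g^{(j)})^{-1}\tau g^{(j')}$ ($j\neq j'$, $\tau\in\Sigma\cup\theta(\Sigma)$). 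Non-singularity of $\Gamma\curvearrowright\GR$ kills every edge inside and between the orbits $\Sigma g^{(j)}x$, $\theta(\Sigma)g^{(j)}x$; the choice of $S$ and $\widetilde F$ makes the distinct ones pairwise disjoint, disjoint from $\Sigma F$, $\alpha(\Sigma F)$ and edge-free towards them; and — the only genuinely new point compared to Lemma~\ref{LemFaithful} — when two suffixes differ by a single middle letter $h_k\in\Sigma\cup\theta(\Sigma)$ the corresponding orbits coincide, but then the two prescriptions for $\gamma_0$ on that orbit agree precisely because the reduced word $g$ is pinch-free (a clash would require a pinch). Define $\gamma_0$ to equal $\alpha$ on $\Sigma F$ and, on each linked pair $(r_a,r_b)$, to agree with $t$ on the relevant orbit (extended $\Sigma$-to-$\theta(\Sigma)$-equivariantly, using $t\in Z$), so that $\gamma^{\pm1}$ sends $r_a$ to $r_b$ exactly as $t^{\pm1}$ does; this is a graph isomorphism intertwining $\Sigma$ and $\theta(\Sigma)$, so Proposition~\ref{PropExtension} extends it to $\gamma\in Z$ with $\gamma|_F=\alpha|_F$. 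Then $\gamma^{\epsilon_k}$ acts as $t^{\epsilon_k}$ at the $k$-th link, whence $\pi_\gamma(g)x=gx\neq x$ (as $g\in S$), i.e. $\gamma\in V_g$. The main obstacle is thus the combinatorial bookkeeping of $\widetilde F$ and $S$ required to separate the two non-conjugate orbit systems $\{\Sigma g^{(j)}x\}$ and $\{\theta(\Sigma)g^{(j)}x\}$ while using the pinch-freeness of $g$ to resolve the unavoidable coincidences.
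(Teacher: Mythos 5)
Your proposal follows essentially the same route as the paper's proof: the same decomposition $O=U\cap V$ into the open sets $U_\varphi$ and $V_g$, the same use of highly core-freeness and Proposition \ref{PropExtension} with the pair $(\Sigma,\theta(\Sigma))$ to prove density of $U_\varphi$ via an element $g=h_2th_1$, the same use of property $(F)$ and non-singularity with $\gamma_0$ equal to $\alpha$ on $\Sigma F$ and to $t$ along the $\Sigma$-orbits of the suffix path to prove density of $V_g$, and the same appeal to Corollary \ref{CorRelActR} for the final assertion. The only (harmless) deviations are that your $\gamma_0$ in the homogeneity step has a slightly smaller domain than the paper's, and that you make explicit the point, left implicit in the paper, that the parametrized construction with $l=|\Sigma|$ also forces $\theta(\Sigma)$ to act freely.
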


\noindent We separate the proof in two lemmas.

\begin{lemma}\label{LemHNNHomogeneous}
If, for all $\epsilon\in\{-1,1\}$, $\Sigma_\epsilon\curvearrowright\Rc$ is free, non-singular and $\Sigma_\epsilon$ is highly core-free w.r.t. $H\curvearrowright\GR$ then the set $U=\{\alpha\in\autR\,:\,\pi_\alpha\text{ is homogeneous}\}$ is a dense $G_\delta$ in $Z$.
\end{lemma}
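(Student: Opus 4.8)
The plan is to mimic the proof of Lemma~\ref{LemHomogeneous}, adapting the amalgamated-free-product argument to the HNN setting. Write $U=\bigcap_{\varphi\in P(\GR)}U_\varphi$ where $U_\varphi=\{\alpha\in Z:\exists g\in\Gamma\text{ with }\pi_\alpha(g)\vert_{d(\varphi)}=\varphi\}$; each $U_\varphi$ is manifestly open, so it suffices to prove density of $U_\varphi$ for a fixed $\varphi\in P(\GR)$. Thus, given $\alpha\in Z$ and a finite set $F\subset V(\GR)$, I must produce $\gamma\in Z$ with $\gamma\vert_F=\alpha\vert_F$ and $\pi_\gamma(g)\vert_{d(\varphi)}=\varphi$ for some $g\in\Gamma$. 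The natural choice is to realize $\varphi$ by an element of the form $g=h_2 t^{\eps} h_1$ (with $h_1,h_2\in H$ and a single stable letter); then $\pi_\gamma(g)=h_2\,\gamma^{\eps}\,h_1$, so one wants $\gamma^{\eps}$ to carry $h_1 d(\varphi)$ onto $h_2^{-1}r(\varphi)$, while $\gamma$ is forced to intertwine $\Sigma$ with $\theta(\Sigma)$.

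The key steps, in order: (1) Using that $\Sigma_1=\Sigma$ is highly core-free w.r.t.\ $H\actson\GR$, choose $h_1\in H$ moving $d(\varphi)$ to a configuration disjoint from (and non-adjacent to) the $\Sigma$-saturation $\Sigma F$, with the $\Sigma$-orbits $\Sigma h_1 x$ pairwise disjoint and carrying no extra edges. (2) Using that $\Sigma_{-1}=\theta(\Sigma)$ is highly core-free w.r.t.\ $H\actson\GR$, choose $h_2\in H$ moving $r(\varphi)$ to a configuration disjoint from $\Sigma F\sqcup\Sigma h_1 d(\varphi)$ and from its $\alpha$-image, again with disjoint $\theta(\Sigma)$-orbits and no extra edges; here one must be careful that on the range side the relevant saturation is by $\theta(\Sigma)$, since $\pi_\gamma(t)=\gamma$ intertwines $\Sigma$ and $\theta(\Sigma)$. (3) Define $A=\Sigma F\sqcup \theta(\Sigma) h_2^{-1} r(\varphi)$ (a $\Sigma$-invariant set after one also adjoins $\Sigma h_1 d(\varphi)$ suitably) and the corresponding target $B=\alpha(\Sigma F)\sqcup\ldots$, and use freeness of $\Sigma\actson\GR$ and of $\theta(\Sigma)\actson\GR$ to define a bijection $\gamma_0:A\to B$ that restricts to $\alpha$ on $\Sigma F$ and sends $\sigma h_2^{-1}\varphi(x)$ to $\theta(\sigma) h_1 x$ (up to the correct bookkeeping of $\eps=\pm1$), arranged so that $\gamma_0$ is a graph isomorphism between the induced subgraphs and satisfies $\gamma_0\sigma=\theta(\sigma)\gamma_0$ for all $\sigma\in\Sigma$. (4) Apply Proposition~\ref{PropExtension} with the two free non-singular $\Sigma$-actions being $\sigma\mapsto\pi_\bullet(\sigma)$ on the domain side and $\sigma\mapsto\pi_\bullet(\theta(\sigma))$ on the range side to extend $\gamma_0$ to $\gamma\in\autR$ with $\gamma\sigma=\theta(\sigma)\gamma$, i.e.\ $\gamma\in Z$; then $\gamma\vert_F=\alpha\vert_F$ and $\pi_\gamma(h_2 t h_1)\vert_{d(\varphi)}=h_2\gamma h_1\vert_{d(\varphi)}=\varphi$, so $\gamma\in U_\varphi$, proving density.

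The step I expect to be the main obstacle is the careful handling of the asymmetry between $\Sigma$ and $\theta(\Sigma)$: unlike the amalgamated case where a single subgroup $\Sigma$ governs both sides, here the domain side of $\gamma$ must be $\Sigma$-saturated while the range side must be $\theta(\Sigma)$-saturated, and the compatibility relation is $\gamma\sigma=\theta(\sigma)\gamma$ rather than $\gamma\sigma=\sigma\gamma$. One must check that Proposition~\ref{PropExtension} applies with $\pi_1=\pi_\alpha\vert_\Sigma$ (via $\sigma\mapsto$ the automorphism $\sigma$ of $\GR$) and $\pi_2=$ the action $\sigma\mapsto\theta(\sigma)$: both are free (by hypothesis $\Sigma_\epsilon\actson\GR$ is free) and non-singular, and the partial isomorphism $\gamma_0$ must genuinely intertwine them on the nose, $\gamma_0\pi_1(\sigma)=\pi_2(\sigma)\gamma_0$. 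A secondary subtlety is ensuring that $g=h_2 t h_1$ (or $h_2 t^{-1}h_1$) is actually a reduced word so that no collapse occurs — but for the homogeneity statement we do not need $\pi_\gamma(g)\neq\id$, only that it implements $\varphi$ on $d(\varphi)$, so this is harmless here and is instead the concern of the companion faithfulness lemma. Beyond these points the verification that $\gamma_0$ preserves adjacency is routine, exactly as in Lemma~\ref{LemHomogeneous}: non-singularity of $\Gamma\actson\GR$ guarantees the only edges inside the saturated sets $A$ and $B$ are those inherited from $F$ and $\alpha(F)$, which $\alpha$ preserves.
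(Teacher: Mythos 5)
Your proposal follows essentially the same route as the paper's proof: use high core-freeness of $\Sigma$ (resp.\ $\theta(\Sigma)$) w.r.t.\ $H\curvearrowright\GR$ to position a $\Sigma$-saturated copy of $d(\varphi)$ and a $\theta(\Sigma)$-saturated copy of $r(\varphi)$ away from $\Sigma F$ and its $\alpha$-image, define a partial isomorphism $\gamma_0$ agreeing with $\alpha$ on $\Sigma F$ and satisfying $\gamma_0\sigma=\theta(\sigma)\gamma_0$, extend it via Proposition \ref{PropExtension} to some $\gamma\in Z$, and realize $\varphi$ as $\pi_\gamma(g_2tg_1)\vert_{d(\varphi)}$. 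The only bookkeeping correction (which you yourself flag) is the direction of $\gamma_0$ in your step (3): to be consistent with $g=h_2th_1$ and with the saturations your choices of $h_1,h_2$ actually control, it must send $\sigma h_1x\mapsto\theta(\sigma)h_2^{-1}\varphi(x)$ for $\sigma\in\Sigma$ (domain $\Sigma$-saturated, range $\theta(\Sigma)$-saturated), which is exactly what the paper does.
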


\begin{proof}
Since $U=\cap_{\varphi\in P(\GR)}U_\varphi$, where $U_\varphi=\{\alpha\in Z\,:\,\exists g\in\Gamma\text{ such that }\pi_{\alpha}(g)\vert_{d(\varphi)}=\varphi\}$ is obviously open, it suffices to show that $U_\varphi$ is dense for all $\varphi\in P(\GR)$. Let $\varphi\in P(\GR)$, $\alpha\in Z$ and $F\subset V(\GR)$ a finite subset. It suffices to show that there exists $\gamma\in Z$ and $g\in\Gamma$ such that $\gamma\vert_F=\alpha\vert_F$ and $\pi_\gamma(g)\vert_{d(\varphi)}=\varphi$. Since $\Sigma$ is highly core-free w.r.t. $H\curvearrowright\GR$, there exists $g_1\in H$ such that $g_1d(\varphi)\cap \Sigma F=\emptyset$ and $g_1x\nsim u$, $\sigma g_1x\nsim g_1x'$ for all $u\in \Sigma F$, $x,x'\in d(\varphi)$, $\sigma\in\Sigma\setminus \{1\}$ and $\Sigma g_1x\cap\Sigma g_1x'=\emptyset$ for all $x,x'\in d(\varphi)$ with $x\neq x'$. Since $\theta(\Sigma)$ is highly core-free w.r.t. $H\curvearrowright\GR$, there exists $g_2\in H$ such that $g_2^{-1}r(\varphi)\cap(\alpha(\Sigma F\sqcup \Sigma g_1d(\varphi)))=\emptyset$ and $g_2^{-1}y\nsim u$, $\theta(\sigma) g_2^{-1}y\nsim g_2^{-1}y'$ for all $u\in\alpha(\Sigma F\sqcup \Sigma g_1d(\varphi))$, $y,y'\in r(\varphi)$, $\sigma\in\Sigma\setminus \{1\}$ and $\theta(\Sigma) g_2^{-1}y\cap\theta(\Sigma) g_2^{-1}y'=\emptyset$ for all $y,y'\in r(\varphi)$ with $y\neq y'$. In the sequel we write $d(\varphi)=\{x_1,\dots,x_n\}$ and $y_i=\varphi(x_i)$. Define $Y=(\sqcup_{i=1}^n\Sigma g_1 x_i)\sqcup(\sqcup_{i=1}^n\Sigma\alpha^{-1}(g_2^{-1}y_i))\sqcup(\Sigma F)$ and note that $\alpha(Y)=(\sqcup_{i=1}^n\theta(\Sigma) \alpha(g_1 x_i))\sqcup(\sqcup_{i=1}^n\theta(\Sigma)(g_2^{-1}y_i))\sqcup(\theta(\Sigma)\alpha( F))$. Note that $\Sigma Y=Y$ and $\theta(\Sigma)\alpha(Y)=\alpha(Y)$. Define the bijection $\gamma_0\,:\, Y\rightarrow\alpha(Y)$ by $\gamma_0(\sigma g_1x_i)=\theta(\sigma) g_2^{-1}y_i$, $\gamma_0(\sigma\alpha^{-1}(g_2^{-1}y_i))=\theta(\sigma)\alpha(g_1x_i)$ for all $1\leq i\leq n$, $\sigma\in\Sigma$ and $\gamma_0\vert_{\Sigma F}=\alpha\vert_{\Sigma F}$. By construction $\gamma_0$ is a graph isomorphism such that $\gamma_0\sigma=\theta(\sigma)\gamma_0$ for all $\sigma\in\Sigma$. By Proposition \ref{PropExtension} there exists an extension $\gamma\in Z$ of $\gamma_0$. Note that $\gamma\vert_F=\alpha\vert_F$ moreover, with $g=g_2tg_1\in \Gamma$ we have, for all $1\leq i\leq n$, $\pi_\gamma(g)x_i=g_2\gamma g_1x_i=g_2\gamma_0(g_1x_i)=g_2g_2^{-1}y_i=y_i$.
\end{proof}

\begin{lemma}\label{LemHNNfaithful}
If, for all $\epsilon\in\{-1,1\}$, $\Sigma_\epsilon\curvearrowright\Rc$ is free and $\Gamma\curvearrowright\Rc$ is non-singular and has property $(F)$ then the set $V=\{\pi_\alpha\,:\,\pi_\alpha\text{ is faithful}\}$ is a dense $G_\delta$ in $Z$.
\end{lemma}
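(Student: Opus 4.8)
The plan is to follow the pattern of Lemma \ref{LemFaithful}, replacing the amalgamated free product combinatorics by the normal form theory of HNN extensions. Write $V=\bigcap_{g\in\Gamma\setminus\{1\}}V_g$ with $V_g=\{\alpha\in Z:\pi_\alpha(g)\neq\id\}$. Each $V_g$ is open because $\alpha\mapsto\pi_\alpha(g)$ is continuous (it is a fixed word in $\alpha$, $\alpha^{-1}$ and the fixed automorphisms $\pi_\alpha(h)=h$, $h\in H$), so since $\Gamma$ is countable and $Z$ is a Baire space it is enough to show that each $V_g$ is dense. If $g\in H\setminus\{1\}$ then $\pi_\alpha(g)=g\neq\id$ for all $\alpha$ (as $\Gamma\curvearrowright\GR$ is faithful), hence $V_g=Z$. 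Otherwise, by Britton's lemma $g$ has a reduced form $g=h_0t^{\epsilon_1}h_1\cdots t^{\epsilon_n}h_n$ with $n\geq1$, $h_i\in H$, $\epsilon_i\in\{-1,1\}$ and no pinch, i.e. $h_l\notin\Sigma_{\epsilon_l}$ whenever $1\leq l\leq n-1$ and $\epsilon_l=-\epsilon_{l+1}$. I fix $\alpha\in Z$ and a finite $F\subset V(\GR)$ and look for $\gamma\in Z$ with $\gamma|_F=\alpha|_F$ and $\pi_\gamma(g)\neq\id$.

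The idea is to build $\gamma$ so that, along the orbit of a well-chosen vertex $x$, the automorphism $\pi_\gamma(t)=\gamma$ behaves exactly like the original action of $t$; then $\pi_\gamma(g)x=gx$ for the original action, so it will suffice to arrange $gx\neq x$. I set $\beta_l:=h_lt^{\epsilon_{l+1}}h_{l+1}\cdots t^{\epsilon_n}h_n$ and $\delta_l:=t^{\epsilon_l}\beta_l$ for $1\leq l\leq n$ (so $g=h_0\delta_1$), and I put $(\mu_l,\nu_l):=(\beta_l,\delta_l)$ if $\epsilon_l=1$ and $(\mu_l,\nu_l):=(\delta_l,\beta_l)$ if $\epsilon_l=-1$; thus $\nu_l=t\mu_l$. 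I then form the finite set $S\subset\Gamma\setminus\{1\}$ consisting of $g$ together with all elements $\mu_m^{-1}\sigma\mu_l$ and $\nu_m^{-1}\theta(\sigma)\nu_l$ for $1\leq l\neq m\leq n$ and $\sigma\in\Sigma$, and the finite vertex set
$$\widetilde{F}:=\Sigma F\ \cup\ \theta(\Sigma)\alpha(F)\ \cup\ \bigcup_{l=1}^n\mu_l^{-1}\Sigma F\ \cup\ \bigcup_{l=1}^n\nu_l^{-1}\theta(\Sigma)\alpha(F).$$
Using property $(F)$ for $\Gamma\curvearrowright\GR$ I choose $x\in V(\GR)\setminus\widetilde{F}$ with $x\nsim u$ for all $u\in\widetilde{F}$ and $wx\neq x$ for all $w\in S$. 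Verifying that every element of $S$ is non-trivial is exactly where the no-pinch condition is used, and is the technical core of the argument.

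Now set $c_l:=\mu_lx$. The properties of $x$ force: $\Sigma c_1,\dots,\Sigma c_n$ pairwise disjoint and disjoint from $\Sigma F$; the $\theta(\Sigma)$-orbits $t(\Sigma c_l)=\theta(\Sigma)\nu_lx$ pairwise disjoint and disjoint from $\theta(\Sigma)\alpha(F)=\alpha(\Sigma F)$; and no edge of $\GR$ between $\Sigma F$ and any $\Sigma c_l$, nor between $\theta(\Sigma)\alpha(F)$ and any $\theta(\Sigma)\nu_lx$. Hence, with $Y:=\Sigma F\sqcup\bigsqcup_l\Sigma c_l$ and $Y':=\theta(\Sigma)\alpha(F)\sqcup\bigsqcup_l\theta(\Sigma)\nu_lx$, the bijection $\gamma_0:Y\to Y'$ defined by $\gamma_0|_{\Sigma F}=\alpha|_{\Sigma F}$ and $\gamma_0|_{\Sigma c_l}=t|_{\Sigma c_l}$ is an isomorphism of the induced subgraphs (it is the automorphism $\alpha$ on $\Sigma F$, the automorphism $t$ on $\bigsqcup_l\Sigma c_l$, and there are no cross edges on either side), and it satisfies $\gamma_0\sigma=\theta(\sigma)\gamma_0$ for all $\sigma\in\Sigma$ (using $\alpha\sigma=\theta(\sigma)\alpha$ and $t\sigma=\theta(\sigma)t$). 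Since $\Sigma\curvearrowright\GR$ and $\theta(\Sigma)\curvearrowright\GR$ are free (by hypothesis) and non-singular (restrictions of $\Gamma\curvearrowright\GR$), Proposition \ref{PropExtension} yields $\gamma\in\autR$ extending $\gamma_0$ with $\gamma\sigma=\theta(\sigma)\gamma$ for all $\sigma$, that is $\gamma\in Z$, and $\gamma|_F=\alpha|_F$. Finally, writing $a_l:=\beta_lx$ and $b_l:=\delta_lx=t^{\epsilon_l}a_l$, one computes the trajectory of $x$ under $\pi_\gamma(g)=h_0\gamma^{\epsilon_1}h_1\cdots\gamma^{\epsilon_n}h_n$: one gets $\pi_\gamma(h_n)x=a_n$, then inductively $\gamma^{\epsilon_l}(a_l)=b_l$ (if $\epsilon_l=1$ then $a_l=c_l$ and $\gamma(a_l)=t(c_l)=b_l$; if $\epsilon_l=-1$ then $b_l=c_l$ and $\gamma(b_l)=t(c_l)=a_l$, so $\gamma^{-1}(a_l)=b_l$) and $\pi_\gamma(h_{l-1})(b_l)=h_{l-1}b_l=a_{l-1}$, ending with $\pi_\gamma(g)x=h_0b_1=gx\neq x$ because $g\in S$. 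Thus $\gamma\in V_g$; as $\alpha$ and $F$ were arbitrary, $V_g$ is dense, whence $V$ is a dense $G_\delta$ in $Z$.

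The step I expect to be the main obstacle is the HNN normal-form bookkeeping: one has to verify, by a case analysis on the signs $\epsilon_l$ and via Britton's lemma using the reducedness of $g$, that every element of $S$ is non-trivial, and one has to match the ``source'' orbits $\Sigma c_l$ with the ``target'' orbits $\theta(\Sigma)\nu_lx$ coherently. This is genuinely more involved than in Lemma \ref{LemFaithful}: there $\gamma$ could be chosen to fix the trajectory vertices, whereas here $\gamma=\pi_\gamma(t)$ must actually transport them, which is what forces both the choice $\gamma_0=t$ outside $\Sigma F$ and the need to keep track simultaneously of the orbits attached to $\Sigma$ and to $\theta(\Sigma)$.
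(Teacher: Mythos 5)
Your proposal is correct and follows essentially the same route as the paper: decompose $V=\bigcap_g V_g$, reduce to density of $V_g$ for $g\notin H$ in reduced form, use property $(F)$ with a finite set $S$ of non-trivial words built from the $\Sigma$- and $\theta(\Sigma)$-translates of the partial trajectories to find a suitable vertex $x$, define $\gamma_0$ to be $\alpha$ on $\Sigma F$ and $t$ on the trajectory orbits, extend via Proposition \ref{PropExtension}, and conclude $\pi_\gamma(g)x=gx\neq x$. The only differences are notational (indexing of the reduced word, and a marginally leaner choice of $S$ and $\widetilde F$), and the appeal to Britton's lemma for the non-triviality of the elements of $S$ is left implicit at the same level of detail as in the paper.
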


\begin{proof}
Since $V=\bigcap_{g\in\Gamma\setminus \{1\}}V_g$, where $V_g=\{\alpha\in Z\,:\,\pi_\alpha(g)\neq\id\}$ is clearly open, it suffices to show that $V_g$ is dense for all $g\in\Gamma\setminus \{1\}$. We may and will assume that $g\notin H$, since when $g\in H$ we have $V_g=Z$. Write $g=h_nt^{\epsilon_n}\dots t^{\epsilon_1}h_0$ a reduced expression for $g$, where $n\geq 1$, $h_k\in H$ and $\epsilon_k\in\{-1,1\}$, $\forall 1\leq k\leq n$. Fix $\alpha\in Z$ and $F\subset V(\Rc)$ be a finite subset. 
\vspace{0.2cm}

\noindent Define 
$$H_1=\left\{\begin{array}{lcl}\Sigma h_0&\text{if}&\epsilon_1=1\\ \Sigma t^{-1} h_0&\text{if}&\epsilon_1=-1\end{array}\right.\text{ and } \widetilde{H}_1=\left\{\begin{array}{lcl}\theta(\Sigma) th_0&\text{if}&\epsilon_1=1\\ \theta(\Sigma) h_0&\text{if}&\epsilon_1=-1.\end{array}\right.$$ 
For $2\leq l\leq n$, define 
$$H_l=\left\{\begin{array}{lcl}\Sigma h_{l-1}t^{\epsilon_{l-1}}\dots t^{\epsilon_1}h_0&\text{if}&\epsilon_l=1\\ \Sigma t^{-1}h_{l-1}t^{\epsilon_{l-1}}\dots t^{\epsilon_1}h_0&\text{if}&\epsilon_l=-1\end{array}\right.\text{and }\widetilde{H}_l=\left\{\begin{array}{lcl}\theta(\Sigma)t h_{l-1}t^{\epsilon_{l-1}}\dots t^{\epsilon_1}h_0&\text{if}&\epsilon_l=1\\ \theta(\Sigma)h_{l-1}t^{\epsilon_{l-1}}\dots t^{\epsilon_1}h_0&\text{if}&\epsilon_l=-1.\end{array}\right.$$
Let $G:=\cup_{l=1}^nH_l$, $\widetilde{G}:=\cup_{l=1}^n\widetilde{H}_l$ and $S:=G^{-1}G\cup\widetilde{G}^{-1}\widetilde{G}\cup\{g\}\subset\Gamma$ be finite subsets of $\Gamma$ and $\widetilde{F}:=\cup_{s\in G\cup\widetilde{G}}s^{-1}(\Sigma F\cup\alpha(\Sigma F))\subset V(\Rc)$ a finite subset of $V(\Rc)$. Since $\Gamma\curvearrowright\Rc$ has property $(F)$ there exists $x\in V(\Rc)\setminus \widetilde{F}$ such that $x\nsim u$ for all $u\in\widetilde{F}$ and $sx\neq x$ for all $s\in S\setminus \{1\}$. In particular, defining $Y_l:=H_lx$ and $\widetilde{Y}_l:=\widetilde{H}_lx$, the sets $\Sigma F$ and $Y_l$ (resp. $\alpha(\Sigma F)$ and $\widetilde{Y}_l$) for $1\leq l\leq n$ are pairwise disjoint and, using moreover the fact that $\Gamma\curvearrowright\Rc$ is non-singular, the only vertices on the induced subgraph on $Y:=\Sigma F\sqcup(\bigsqcup_{l=1}^n Y_l)$ (resp. on $\widetilde{Y}:=\theta(\Sigma)\alpha( F)\sqcup(\bigsqcup_{l=1}^n \widetilde{Y}_l)$) are the ones with extremities in $\Sigma F$ (resp. in $\theta(\Sigma)\alpha(F)$). It implies that the bijection $\gamma_0\,:\, Y\rightarrow\widetilde{Y}$ defined by $\gamma_0\vert_{\Sigma F}=\alpha\vert_{\Sigma F}$ and, for all $1\leq l\leq n$, $\gamma_0\vert_{Y_l}=t\vert_{Y_l}$ is actually a graph isomorphism between the induced subgraphs. Since we clearly have $\gamma_0\sigma=\theta(\sigma)\gamma_0$ for all $\sigma\in\Sigma$, there exists, by Proposition \ref{PropExtension} an extension $\gamma\in Z$ of $\gamma_0$. Then $\gamma$ satisfies $\gamma\vert _F=\alpha\vert_F$ and $\pi_\gamma(g)x=h_n\gamma^{\epsilon_n}\dots\gamma^{\epsilon_1}h_0x=h_nt^{\epsilon_n}\dots t^{\epsilon_1}h_0x=gx\neq x$ since $g\in S$. It follows that $\gamma\in V_g$.
\end{proof}

\noindent\textit{End of the proof of Theorem \ref{ThmHNN}.} The first assertion follows directly from Lemmas \ref{LemHNNHomogeneous} and \ref{LemHNNfaithful} since $O=U\cap V$ and the last part follows from the first part and Corollary \ref{CorRelActR}.

\section{Actions of groups acting on trees on the random graph}

\noindent Let $\Gamma$ be a group acting without inversion on a non-trivial tree. By \cite{Se83}, the quotient graph $\mathcal{G}$ can be equipped with the structure of a graph of groups $(\mathcal{G},\{\Gamma_p\}_{p\in\VG},\{\Sigma_e\}_{e\in\EG})$ where each $\Sigma_e=\Sigma_{\overline{e}}$ is isomorphic to an edge stabilizer and each $\Gamma_p$ is isomorphic to a vertex stabilizer and such that $\Gamma$ is isomorphic to the fundamental group $\pi_1(\Gamma, \mathcal{G})$ of this graph of groups i.e., given a fixed maximal subtree $\mathcal{T}\subset\mathcal{G}$, the group $\Gamma$ is generated by the groups $\Gamma_p$ for $p\in\VG$ and the edges $e\in\EG$ with the relations
$$\overline{e}=e^{-1},\quad s_{e}(x)=er_{e}(x)e^{-1}\,\,,\,\forall x\in\Sigma_e\quad\text{and}\quad e=1\,\,\,\,\forall e\in {\rm E}(\mathcal{T}),$$
where $s_e\,:\,\Sigma_e\rightarrow \Gamma_{s(e)}$ and $r_e=s_{\overline{e}}\,:\,\Sigma_e\rightarrow\Gamma_{r(e)}$ are respectively the source and range group monomomorphisms.

\begin{theorem}\label{Trees}
If $\Gamma_p$ is countably infinite, for all $p\in\VG$, $\Sigma_e$ is finite and $s_e(\Sigma_e)$ is highly core-free in $\Gamma_{s(e)}$, for all $e\in\EG$, then $\Gamma\in\mathcal{H}_\Rc$.
\end{theorem}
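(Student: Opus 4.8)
The plan is to exhibit $\Gamma$ as a single amalgamated free product or HNN extension over one edge group $\Sigma_e$, to construct once and for all a convenient action of $\Gamma$ on $\GR$, and then to apply Theorem \ref{ThmMain} or Theorem \ref{ThmHNN}. Since every vertex group is infinite, no factor will be finite, so Theorem \ref{ThmOneFiniteFactor} is not needed. To build the action, I would endow $\Gamma$ with the edgeless graph structure on the vertex set $\Gamma$, let $\Gamma$ act by left translation (a free, strongly faithful, non-singular action with infinite orbits), fix $l$ to be any common multiple of the $\vert\Sigma_e\vert$ (there are finitely many edges), and pass to the induced action on $\GR$ with parameter $l$, obtaining $\pi\colon\Gamma\curvearrowright\GR$ (Proposition \ref{PropRelRandExt}(1)). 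By Propositions \ref{PropRelRandomExt}, \ref{PropRelRandExt}, \ref{PropLim} and \ref{PropGraphLim}, $\pi$ is non-singular and has property $(F)$; the argument proving Proposition \ref{PropRelRandomExt}(7) applies verbatim to any multiple $l$ of $\vert\Sigma_e\vert$, so every $s_e(\Sigma_e)$ and every $r_e(\Sigma_e)$ acts freely on $\GR$; and, exactly as for Corollary \ref{CorRelActR}, for all subgroups $K<L<\Gamma$ with $K$ highly core-free in $L$, $K$ is highly core-free with respect to $L\curvearrowright\GR$, since $L$ acts freely on the edgeless set $\Gamma$ (so highly core-freeness there is highly core-freeness in $L$) and this passes to $\GR$ by Proposition \ref{PropRelRandExt}(5).

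Next I would establish an algebraic stability lemma: \emph{if a finite subgroup $K$ is highly core-free in some vertex group $\Gamma_p$ of $\mathcal{G}$, then $K$ is highly core-free in $\Gamma=\pi_1(\mathcal{G})$.} The proof is by induction on the number of edges of $\mathcal{G}$. If $\mathcal{G}$ has a single edge, $\Gamma$ is $\Gamma_{s(e)}\underset{\Sigma_e}{*}\Gamma_{r(e)}$ or $\operatorname{HNN}(\Gamma_p,\Sigma_e,\theta)$, and the claim is exactly the permanence of highly core-freeness under amalgamation over, or HNN along, a finite subgroup that is highly core-free in the vertex group(s) --- the kind of stability result established in \cite{FMS15}. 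For the induction step, fix a maximal subtree $\mathcal{T}\subset\mathcal{G}$ and an edge $e$; if $e\notin\mathcal{T}$ then $\Gamma=\operatorname{HNN}(\pi_1(\mathcal{G}\setminus\{e,\overline{e}\}),\Sigma_e,\theta)$, and if $\mathcal{G}=\mathcal{T}$ then deleting $e$ splits $\mathcal{G}$ into $\mathcal{G}_1\ni s(e)$ and $\mathcal{G}_2\ni r(e)$ with $\Gamma=\pi_1(\mathcal{G}_1)\underset{\Sigma_e}{*}\pi_1(\mathcal{G}_2)$, all sub-graphs carrying the restricted graph-of-groups structure; in either case the factor containing $\Gamma_p$ is the fundamental group of a graph of groups with fewer edges satisfying the same hypotheses, so $K$ is highly core-free in it by induction, and one further application of the single-edge stability result of \cite{FMS15} (using that $\Sigma_e$ is finite and, by the hypothesis applied to $e$ and to $\overline{e}$, highly core-free in the two vertex groups into which it embeds) finishes the step.

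To conclude: the tree being non-trivial, $\mathcal{G}$ has at least one edge and $\Gamma$ is infinite, as it contains an infinite vertex group. Peel an edge $e$ as above. In the HNN case, apply Theorem \ref{ThmHNN} to $\Gamma=\operatorname{HNN}(H,\Sigma_e,\theta)$ with $H=\pi_1(\mathcal{G}\setminus\{e,\overline{e}\})$ and the action $\pi$: its hypotheses hold because $\pi$ is non-singular with property $(F)$, because the two associated subgroups $s_e(\Sigma_e)$ and $r_e(\Sigma_e)$ act freely on $\GR$, and because each of them is highly core-free in its vertex group inside $H$, hence highly core-free in $H$ by the algebraic lemma, hence highly core-free with respect to $H\curvearrowright\GR$ by the property of $\pi$ from the first paragraph. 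In the amalgamated free product case one argues identically with Theorem \ref{ThmMain}, both factors $\pi_1(\mathcal{G}_i)$ being infinite. In either case the set $O$ of good parameters is a dense $G_\delta$ in the non-empty Polish space $Z$, hence non-empty, and any $\alpha\in O$ gives a faithful homogeneous action $\pi_\alpha$ of $\Gamma$ on $\GR$, so $\Gamma\in\mathcal{H}_\Rc$.

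I expect the main obstacle to be the algebraic stability lemma of the second paragraph; the rest is Bass--Serre bookkeeping \cite{Se83} together with direct invocations of the already proved Theorems \ref{ThmMain} and \ref{ThmHNN}. Should the precise single-edge statement not be literally available in \cite{FMS15}, one would prove it directly on coset spaces: given a finite subset of $L/K$, a suitable element of $L$ pushes it deep into the Bass--Serre tree so that its parts fall into far-apart vertex cosets, and highly core-freeness of $\Sigma_e$ in the vertex groups handles the part remaining near the basepoint.
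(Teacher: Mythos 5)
Your proposal follows essentially the same route as the paper: remove one edge $e_0$, split into the case where the complement is connected ($\Gamma$ an HNN extension over $\Sigma_{e_0}$) or disconnected ($\Gamma$ an amalgam over $\Sigma_{e_0}$), check that the edge group is highly core-free in the resulting factor(s), and invoke Theorems \ref{ThmMain} and \ref{ThmHNN}. Two of your auxiliary steps quietly assume that the quotient graph $\mathcal{G}$ is finite, which is not part of the hypotheses: the choice of $l$ as a common multiple of all the $\vert\Sigma_e\vert$, and the induction on the number of edges for your stability lemma. Both are easily repaired: only the peeled edge group needs to act freely on $\GR$, so $l=\vert\Sigma_{e_0}\vert$ suffices (this is exactly Corollary \ref{CorRelActR}, i.e.\ the ``in particular'' clauses of Theorems \ref{ThmMain} and \ref{ThmHNN}); and the stability lemma needs no induction at all, since highly core-freeness passes from $\Gamma_p$ to any overgroup $H>\Gamma_p$ directly (decompose a finite subset of $H$ into its intersections with the right $\Gamma_p$-cosets and apply the defining condition in $\Gamma_p$ to the union of the resulting finite subsets of $\Gamma_p$), which is the one-line justification the paper uses.
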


\begin{proof}

Let $e_0$ be one edge of $\mathcal{G}$ and $\mathcal{G}'$ be the graph obtained from $\mathcal{G}$ by removing the edges $e_0$ and $\overline{e_0}$.

\vspace{0.2cm}

\noindent \textbf{Case 1: $\mathcal{G}'$ is connected.} It follows from Bass-Serre theory that $\Gamma={\rm HNN}(H,\Sigma,\theta)$ where $H$ is fundamental group of our graph of groups restricted to $\mathcal{G}'$, $\Sigma=r_{e_0}(\Sigma_{e_0})<H$ is a subgroup and $\theta\,:\,\Sigma\rightarrow H$ is given by $\theta=s_{e_0}\circ r_{e_0}^{-1}$. By hypothesis $H$ is countably infinite, $\Sigma$ is finite and, since $\Sigma<\Gamma_{r(e_0)}$ (resp. $\theta(\Sigma)<\Gamma_{s(e_0)}$) is a highly core-free subgroup, $\Sigma<H$ (resp. $\theta(\Sigma)<H$) is also a highly core-free subgroup. Thus we may apply Theorem \ref{ThmHNN} to conclude that $\Gamma\in\mathcal{H}_\Rc$.

\vspace{0.2cm}

\noindent \textbf{Case 2: $\mathcal{G}'$ is not connected.} Let $\mathcal{G}_1$ and $\mathcal{G}_2$ be the two connected components of $\mathcal{G}'$ such that $s(e_0)\in{\rm V}(\mathcal{G}_1)$ and $r(e_0)\in{\rm V}(\mathcal{G}_2)$. Bass-Serre theory implies that $\Gamma=\Gamma_1*_{\Sigma{e_0}}\Gamma_2$, where $\Gamma_i$ is the fundamental group of our graph of groups restricted to $\mathcal{G}_i$, $i=1,2$, and $\Sigma_{e_0}$ is viewed as a highly core-free subgroup of $\Gamma_1$ via the map $s_{e_0}$ and as a highly core-free subgroup of $\Gamma_2$ via the map $r_{e_0}$ since $s_{e_0}(\Sigma_{e_0})$ is highly core-free in $\Gamma_{s(e_0)}$ and $r_{e_0}(\Sigma_{e_0})$ is highly core-free in $\Gamma_{r(e_0)}$ by hypothesis. Since $\Gamma_1$ and $\Gamma_2$ are countably infinite and $\Sigma_{e_0}$ is finite, we may apply Theorem \ref{ThmMain} to conclude that $\Gamma\in\mathcal{H}_\Rc$.
\end{proof}

\section{Actions of free groups on the random graph}

\noindent Recall that $P(\Rc)$ denotes the set of isomorphisms of $\Rc$ between finite induced subgraphs $d(\varphi),r(\varphi)\subset V(\Rc)$ and note that $P(\Rc)$ has a natural structure of groupoid.
\vspace{0.2cm}

\noindent In this Section, we prove Theorem $D$. The main tool, called \textit{an elementary extension}, is a refinement of the ``back and forth'' method used to extend any partial isomorphism $\varphi \in P(\Rc)$ to an automorphism of $\Rc$.
\vspace{0.2cm}

\subsection{Elementary extensions} 

\noindent Let $\Phi\subset P(\Rc)$ and $F\subset V(\Rc)$ be finite subsets. Let $\gamma\in\Phi$. We construct a partial isomorphism $\tilde \gamma \in P(\Rc)$ which extends $\gamma$ as follows: first, we set
$$
 K :=  F \cup \bigcup_{\varphi\in\Phi} \big( d(\varphi) \cup r(\varphi) \big),\quad D :=  K \setminus d(\gamma), \quad R :=  K \setminus r(\gamma).
$$
Note that $D$ and $R$ have the same cardinality and write $D=\{x_1,\ldots,x_m\}$ and $R=\{v_1,\ldots,v_m\}$. We then successively find vertices $u_1,\ldots,u_m \in V(\Rc)\setminus K$ such that:
\begin{itemize}
 \item the vertex $u_j$ is adjacent to all vertices $u\in d(\gamma)$ such that $v_j\sim \gamma(u)$ and all vertices $u_{j'}$, with $j'<j$, such that $v_{j'}\sim v_j$;
 \item it is not adjacent to any other vertices in $K$.  
\end{itemize}
 After that, we successively find vertices $y_1,\ldots,y_m \in V(\Rc)\setminus (K\cup\{u_1,\ldots,u_m\})$ such that:
\begin{itemize}
 \item the vertex $y_i$ is adjacent to all vertices $y\in r(\gamma)$ such that $x_j\sim \gamma^{-1}(y)$ and all vertices $y_{i'}$, with $i'<i$, such that $x_{i'}\sim x_i$;
 \item it is not adjacent to any other vertices in $K$, nor to vertices $u_1,\ldots,u_m$.  
\end{itemize}
Finally, since the sets $D=\{x_1,\ldots,x_m\}, d(\gamma), \{u_1,\ldots,u_m\}$ are pairwise disjoint, and the sets $\{y_1,\ldots,y_m\}, r(\gamma), R=\{v_1,\ldots,v_m\}$ are pairwise disjoint, we can define a partial bijection
\[
 \tilde\gamma: K \sqcup \{u_1,\ldots,u_m\} \to \{y_1,\ldots,y_m\} \sqcup K
\]
which extends $\gamma$ by setting $\tilde\gamma(x_i)=y_i$ for all $i=1,\ldots,m$, and $\tilde\gamma(z) = \gamma(z)$ for all $z\in d(\gamma)$, and $\tilde\gamma(u_j) = v_j$ for all $j=1,\ldots, n$.

\begin{definition}
 The above $\tilde\gamma$ is called an \textit{elementary extension} of the triple $(\gamma,\Phi,F)$.
\end{definition}
 
\begin{proposition}\label{PropElExt}
 Any elementary extension $\tilde\gamma$ of $(\gamma,\Phi,F)$ has the following properties:
 \begin{enumerate}
  \item $\tilde\gamma \in P(\Rc)$, that is, $\tilde \gamma$ is an isomorphism between finite induced subgraphs;
  \item $F \subset d(\tilde{\gamma})\cap r(\tilde{\gamma})$, and for all $\varphi\in\Phi$, $d(\varphi)\cup r(\varphi)\subset d(\tilde{\gamma})\cap r(\tilde{\gamma})$;
  \item The sets $\tilde\gamma^{-1}\big(d(\gamma)\setminus r(\gamma)\big)$, $\tilde\gamma\big(r(\gamma)\setminus d(\gamma)\big)$ and $K$ are pairwise disjoint and $u\nsim v$, $\forall u \in \tilde\gamma^{-1}\big(d(\gamma)\setminus r(\gamma)\big)$, $\forall v\in \tilde\gamma\big(r(\gamma)\setminus d(\gamma)\big)$;
  \item Let $\widetilde{\Phi}:=\left(\Phi\setminus\{\gamma\}\right)\cup\{\widetilde{\gamma}\}$. For any subset $A$ of $\bigcup_{\varphi\in\widetilde{\Phi}}(d(\varphi)\cup r(\varphi))$ let $\Omega$ (resp. $\widetilde{\Omega}$) be the orbit of $A$ under the groupoid generated by $\Phi$ (resp. $\widetilde{\Phi}$). Then, any edge of $\Rc$ contained in $\widetilde{\Omega}\setminus \Omega$ is the image under $\widetilde{\gamma}$ or $\widetilde{\gamma}^{-1}$ of an edge in $\Omega$.
 \end{enumerate}
\end{proposition}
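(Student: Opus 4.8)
The plan is to obtain (1)--(3) by a direct inspection of the vertices produced in the construction of $\widetilde\gamma$, and to get (4) -- the substantive part -- by analysing how the orbit of $A$ changes when $\gamma$ is replaced by $\widetilde\gamma$. I expect (4) to be where essentially all the work is.

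For (1), first note that $\widetilde\gamma$ is a bijection of $K\sqcup\{u_1,\dots,u_m\}$ onto $\{y_1,\dots,y_m\}\sqcup K$: its three pieces have domains $d(\gamma)$, $D$, $\{u_j\}$ (which partition $K\sqcup\{u_j\}$) and ranges $r(\gamma)$, $\{y_i\}$, $R$ (which partition $\{y_i\}\sqcup K$), and $|D|=|R|$ as recorded in the construction. To see it preserves and reflects adjacency, I would run through the types of an unordered pair of vertices in its domain and cite the corresponding clause of the construction: both in $d(\gamma)$ ($\widetilde\gamma=\gamma$ there); one in $d(\gamma)$ and one a $u_j$ (the clause $u_j\sim u\Leftrightarrow v_j\sim\gamma(u)$); one in $d(\gamma)$ and one an $x_i$ (the clause $y_i\sim\gamma(u)\Leftrightarrow x_i\sim u$); two $u_j$'s (the clause $u_j\sim u_{j'}\Leftrightarrow v_j\sim v_{j'}$); two $x_i$'s (the clause $y_i\sim y_{i'}\Leftrightarrow x_i\sim x_{i'}$); and a $u_j$ together with an $x_i$, where both sides fail since $u_j$ is adjacent to no vertex of $K\setminus d(\gamma)$ and $y_i$ to none of $K\setminus r(\gamma)$. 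For (2): $K\subseteq d(\widetilde\gamma)\cap r(\widetilde\gamma)$ and $F\cup\bigcup_{\varphi\in\Phi}(d(\varphi)\cup r(\varphi))\subseteq K$ by definition of $K$. For (3): since $\widetilde\gamma$ equals $\gamma$ on $d(\gamma)$ and sends $D$ into $V(\Rc)\setminus K$, while $d(\gamma)\setminus r(\gamma)\subseteq K\setminus r(\gamma)=R=\widetilde\gamma(\{u_j\})$, we get $\widetilde\gamma^{-1}(d(\gamma)\setminus r(\gamma))\subseteq\{u_1,\dots,u_m\}$, and symmetrically $\widetilde\gamma(r(\gamma)\setminus d(\gamma))\subseteq\{y_1,\dots,y_m\}$; these two sets and $K$ are pairwise disjoint because the $u_j$'s and $y_i$'s were chosen outside $K$ and form disjoint families, and $u_j\nsim y_i$ for all $i,j$ by construction of the $y_i$'s, which is the stated non-adjacency.

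For (4), the starting point is that $\Omega\subseteq\widetilde\Omega$: since $\widetilde\gamma$ extends $\gamma$ and $\widetilde\Phi\setminus\{\widetilde\gamma\}=\Phi\setminus\{\gamma\}$, every morphism of the groupoid generated by $\Phi$ is a restriction of one generated by $\widetilde\Phi$. The crux is to describe $\widetilde\Omega$ itself. All maps in $\Phi$, as well as $\gamma$, have domain and range inside $K$, whereas $d(\widetilde\gamma)=K\sqcup\{u_j\}$ and $r(\widetilde\gamma)=K\sqcup\{y_i\}$, with $\widetilde\gamma$ carrying $D=K\setminus d(\gamma)$ bijectively onto $\{y_i\}$ and $\{u_j\}$ bijectively onto $R=K\setminus r(\gamma)$; consequently $\Omega$ is stable under every generator of the groupoid of $\widetilde\Phi$ except possibly for $\widetilde\gamma$ applied inside $D\cup\{u_j\}$ and $\widetilde\gamma^{-1}$ applied inside $\{y_i\}\cup R$. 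An induction on word length then shows that $\widetilde\Omega$ is built from $\Omega$ by alternately (a) adjoining $y_i$ whenever $x_i$ is present, $x_i$ whenever $y_i$ is, $v_j$ whenever $u_j$ is, and $u_j$ whenever $v_j$ is, and (b) re-saturating under the groupoid of $\Phi$; in particular $\widetilde\Omega\subseteq\Omega\cup K\cup\{u_j\}\cup\{y_i\}$. Finally, take an edge $\{p,q\}$ of $\Rc$ with $p,q\in\widetilde\Omega\setminus\Omega$. The case $p\in\{u_j\}$, $q\in\{y_i\}$ cannot occur since no $u_j$ is adjacent to any $y_i$; in every remaining case -- both endpoints in $K$, one in $K$ and the other a $u_j$ or a $y_i$, two $u_j$'s, or two $y_i$'s -- the appropriate clause of the construction of $\widetilde\gamma$ identifies $\{p,q\}$ with $\widetilde\gamma(E)$ or $\widetilde\gamma^{-1}(E)$, where $E$ is the pair obtained by transporting each endpoint into $K$ along $\widetilde\gamma^{\pm1}$ (for instance, $p=y_i$, $q=y_{i'}$ forces $x_i\sim x_{i'}$ and $\widetilde\gamma^{-1}(\{p,q\})=\{x_i,x_{i'}\}$), and one checks that this $E$ is an edge contained in $\Omega$.

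The hard part is (4): the combinatorial bookkeeping that determines exactly which vertices lie in $\widetilde\Omega\setminus\Omega$ rather than in $\Omega$, and that guarantees the pulled-back pair $E$ lands in $\Omega$ and not merely in $\widetilde\Omega$. Everything else reduces to checking the situation against the explicit list of adjacencies built into $\widetilde\gamma$.
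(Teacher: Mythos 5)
Parts (1)--(3) of your argument are correct and follow the same case-by-case inspection as the paper's proof; your remark in (3) that the two sets are merely \emph{contained} in $\{u_1,\ldots,u_m\}$ and $\{y_1,\ldots,y_m\}$ is, if anything, slightly more careful than the paper's phrasing.

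The gap is in (4), and it sits exactly where you flag ``the hard part''. Your bookkeeping of $\widetilde{\Omega}$ is too weak to support the final case analysis. The single fact the paper's proof rests on is the inclusion $\widetilde{\Omega}\setminus\Omega\subset\widetilde{\gamma}^{-1}\big(\Omega\setminus r(\gamma)\big)\cup\widetilde{\gamma}\big(\Omega\setminus d(\gamma)\big)$: every new point is a $u_j$ with $v_j\in\Omega$ or a $y_i$ with $x_i\in\Omega$, so $\widetilde{\Omega}\setminus\Omega$ does not meet $K$ at all. This follows from a one-step induction: all generators of the groupoid of $\widetilde{\Phi}$ other than $\widetilde{\gamma}^{\pm 1}$ preserve $\Omega$; $\widetilde{\gamma}^{\pm1}$ agrees with $\gamma^{\pm1}$ on $d(\gamma)$, resp.\ $r(\gamma)$, hence preserves $\Omega$ there; the only escapes are $\widetilde{\gamma}\vert_D$ and $\widetilde{\gamma}^{-1}\vert_R$, whose images $\{y_i\}$ and $\{u_j\}$ are dead ends (the only generator defined at $y_i$ is $\widetilde{\gamma}^{-1}$, which returns $x_i\in\Omega$; likewise for $u_j$). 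Your weaker statement $\widetilde{\Omega}\subseteq\Omega\cup K\cup\{u_j\}\cup\{y_i\}$ leaves open that $\widetilde{\Omega}\setminus\Omega$ meets $K$, and your case list accordingly contains edges with an endpoint in $K\cap(\widetilde{\Omega}\setminus\Omega)$, for which ``transporting into $K$ along $\widetilde{\gamma}^{\pm1}$'' is not even defined and the assertion that the pulled-back pair $E$ lies in $\Omega$ is precisely the content of the proposition, not a verification one can defer with ``one checks''.

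A second point: you read ``edge contained in $\widetilde{\Omega}\setminus\Omega$'' as both endpoints lying in $\widetilde{\Omega}\setminus\Omega$. The paper proves, and Proposition \ref{treezationProperties}(5) uses, the version for pairs $(x,y)\in\widetilde{\Omega}^2\setminus\Omega^2$ with $x\sim y$, which includes the mixed case $x\in\widetilde{\Omega}\setminus\Omega$, $y\in\Omega$. That mixed case is where the graph-theoretic input actually enters: if $x=y_i$ and $y\in\Omega\subset K$ with $y_i\sim y$, then by construction $y_i$ is adjacent within $K$ only to vertices of $r(\gamma)$, so $y\in r(\gamma)$, whence $\widetilde{\gamma}^{-1}(y)=\gamma^{-1}(y)\in\Omega$ and $\widetilde{\gamma}^{-1}(x)=x_i\in\Omega$, exhibiting $(x,y)$ as the image under $\widetilde{\gamma}$ of an edge in $\Omega$. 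Under your reading this case never appears, and the two cases that remain after the exclusion $u_j\nsim y_i$ become the easy ones; so either you are proving a statement too weak for its intended use, or, if you intend the stronger reading, the substantive case is missing.
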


\begin{proof}
\noindent(1) It is clear from the construction that $K$ is finite, as it is a finite union of finite sets. Hence, $d(\tilde\gamma) = K \sqcup \{u_1,\ldots,u_m\}$ and $r(\tilde\gamma) = \{y_1,\ldots,y_m\} \sqcup K$ are finite. Let us now check that $\tilde \gamma$ is an isomorphism between finite induced subgraphs. Notice that one has $d(\tilde\gamma)=\{x_1,\ldots,x_m\}\sqcup d(\gamma) \sqcup \{u_1,\ldots,u_m\}$ and let $u,x\in d(\tilde\gamma)$. We are going to check that $u\sim x \Leftrightarrow \tilde\gamma(u)\sim\tilde\gamma(x)$ by distinguishing cases. If $u$ and $x$ are in the same component of the disjoint union $\{x_1,\ldots,x_m\} \sqcup d(\gamma)\sqcup \{u_1,\ldots,u_m\}$, the equivalence follows readily from the construction and the fact that $\gamma\in P(\Rc)$. If $x\in d(\gamma)$ and $u=u_j$, then $x\sim u_j \Leftrightarrow \gamma(x) \sim v_j \Leftrightarrow \tilde \gamma(x) \sim \tilde \gamma(u_j)$ by the selection of the $u_j$'s. If $x=x_i$ and $u\in d(\gamma)$, then $x_i\sim u \Leftrightarrow y_i \sim\gamma(u) \Leftrightarrow \tilde \gamma(x_i) \sim \tilde \gamma(u)$ again by the choice of the $y_i$'s. Finally, if $x=x_i$ and $u=u_j$, then $\tilde\gamma(u_j) = v_j$  and $\tilde\gamma(x_i) = y_i$. It follows from the selection of the $u_j$'s and $y_i$'s that $u_j\not\sim x_i$ and $v_j\not\sim y_i$. Hence the equivalence holds.
\vspace{0.2cm}

\noindent(2) It is clear since $F\cup d(\varphi)\cup r(\varphi)$ for $\varphi\in\Phi$ are contained in $K$ and $K\subset d(\widetilde{\gamma})\cap r(\widetilde{\gamma})$.
\vspace{0.2cm}

\noindent(3) Notice that $\tilde\gamma^{-1}\big(d(\gamma)\setminus r(\gamma)\big) \subset \{u_1,\ldots,u_m\}$ and $\tilde\gamma\big(r(\gamma)\setminus d(\gamma)\big) \subset \{y_1,\ldots,y_m\}$. 
Therefore the assertion follows from the construction.
\vspace{0.2cm}

\noindent(4) Note that $\widetilde{\Omega}\setminus\Omega\subset\widetilde{\gamma}^{-1}(\Omega\setminus r(\gamma))\cup\widetilde{\gamma}(\Omega\setminus d(\gamma))$.  Let $(x,y)\in\widetilde{\Omega}^2\setminus \Omega^2$ such that $x\sim y$. If $x,y\in\widetilde{\gamma}(\Omega\setminus d(\gamma))$ or $x,y\in\widetilde{\gamma}^{-1}(\Omega\setminus r(\gamma))$ the result is trivial. Note that we cannot have $x\in\widetilde{\gamma}(\Omega\setminus d(\gamma))$ and $y\in\widetilde{\gamma}^{-1}(\Omega\setminus r(\gamma))$ because otherwise there exists $i$ and $j$ such that $x=y_i$ and $y=u_j$ which implies that $x\nsim y$. Suppose now that $x\in\widetilde{\gamma}(\Omega\setminus d(\gamma))$ and $y\in\Omega$. Then there is some $i$ such that $x=y_i$ but then $y_i\sim y$ implies that $y\in r(\gamma)$ and $\widetilde{\gamma}^{-1}(y)\sim\widetilde{\gamma}^{-1}(x)$. Since $\widetilde{\gamma}^{-1}(y)=\gamma^{-1}(y)\in\Omega$ and $\widetilde{\gamma}^{-1}(x)=\widetilde{\gamma}^{-1}(y_i)=x_i\in\Omega$, we are done. The other cases are proved in the same way.
\end{proof}

\vspace{0.2cm}

\subsection{``Treezation'' of a free group action}\label{treezation}

\noindent We denote by $\F_k$ the free group on $k$ generators $a_1,\ldots,a_k$. Given a tuple $\bar\alpha=(\alpha_1,\dots,\alpha_k)\in\autR^k$, we denote by $\alpha:\F_k\to\autR$ the unique group homomorphism such that $\alpha(a_j)=\alpha_j$ for all $j$.

\vspace{0.2cm}

\noindent In this section, given such a $k$-tuple $\bar\alpha\in\autR^k$ and a finite set $F$ of vertices of $\Rc$, we explain how to get a $k$-tuple $\bar\beta\in\autR^k$ such that, for all $j$ and $\varepsilon=\pm 1$, the automorphisms $\alpha_j^\varepsilon$ and $\beta_j^\varepsilon$ coincide on $F$, and, informally speaking, the associated Schreier graph (see definition below) looks like a tree far from $F$. 

\vspace{0.2cm}

\noindent First, we define, for $1\leq j\leq k$, the element $\beta_{0,j}\in  P(\Rc)$ by the restriction:
$$\beta_{0,j}:=\alpha_j\vert_{\alpha_j^{-1}(F)\cup F}\,:\,\alpha_j^{-1}(F)\cup F\rightarrow F\cup \alpha_j(F).$$ Write $\widetilde{F}=\bigcup_{j=1}^k\left(\alpha_j^{-1}(F)\cup F\cup\alpha_j(F)\right)=\bigcup_{j=1}^k(d(\beta_{0,j})\cup r(\beta_{0,j}))$. Let $\mathcal{B}_0\subset P(\Rc)$ be the groupoid generated by $\beta_{0,j}$ for $1\leq j\leq k$ and note that, for all $\varphi\in\mathcal{B}_0$, one has $d(\varphi)\cup r(\varphi)\subset \widetilde{F}$.

\vspace{0.2cm}

\begin{remark}\label{RmkOrbits}
Suppose that $ \alpha$ has all orbits infinite. It implies that for all $x\in \widetilde{F}$ there exists $g\in\mathcal{B}_0$ and $1\leq j\leq k$ such that $x\in d(g)$ and $gx\notin d(\beta_{0,j})\cap r(\beta_{0,j})$. Indeed, since the $\alpha$-orbit of $x$ is infinite, there exists $w\in\F_k$ such that $\alpha(w)x\notin\widetilde{F}$. Write $w=\alpha_{i_n}^{\epsilon_n}\dots\alpha_{i_1}^{\epsilon_1}$ its reduced expression, where $\epsilon_l\in\{-1,1\}$. Since $\alpha(w)x\notin\widetilde{F}$ it follows that $x\notin d(\beta_{0,i_n}^{\epsilon_n}\dots\beta_{0,i_1}^{\epsilon_1})$. We may and will assume that $x\in d(\beta_{0,i_1}^{\epsilon_1})$ (otherwise the conclusion is obvious). Let $1\leq n_0\leq n-1$ be the largest integer such that $x\in d(\beta_{0,i_{n_0}}^{\epsilon_{n_0}}\dots\beta_{0,i_1}^{\epsilon_1})$. Defining $g=\beta_{0,i_{n_0}}^{\epsilon_{n_0}}\dots\beta_{0,i_1}^{\epsilon_1}\in\mathcal{B}_0$ and $j=i_{n_0+1}$ we reach the conclusion.

\end{remark}

\noindent Let $V(\Rc)=\{z_0,z_1,\dots\}$ be an enumeration of $V(\Rc)$ and define inductively $\bar\beta_l\in P(\Rc)^k$ by $\bar\beta_0=(\beta_{0,1},\dots,\beta_{0,k})$ and, if for some $l\geq 0$ the element $\bar\beta_l=(\beta_{l,1},\dots,\beta_{l,k})$ is defined, we denote by $j(l)$ the unique element in $\{1,\dots k\}$ such that $j(l)\equiv l+1$ (mod $k$) and we define $\bar\beta_{l+1}=(\beta_{l+1,1},\dots,\beta_{l+1,k})$ where $\beta_{l+1,j(l)}$ is the elementary extension of the triple $(\beta_{l,j(l)},\{\beta_{l,1},\dots,\beta_{l,k}\},\{z_l\})$ and $\beta_{l+1,j}=\beta_{l,j}$ for $j\neq j(l)$. We will denote by $\mathcal{B}_l\subset P(\Rc)$ the groupoid generated by $\beta_{l,1},\dots,\beta_{l,k}$.

\vspace{0.2cm}

\noindent Define $\bar \beta=(\beta_1,\dots,\beta_k)\in\text{Aut}(\Rc)^k$, where $\beta_j$ is (well-) defined by $\beta_j(x)=\beta_{j,l}(x)$ whenever $x\in d(\beta_{j,l})$ (and denote by $\beta\,:\,\F_k\rightarrow\text{Aut}(\Rc)$ the unique group morphism which maps the $a_j$ onto $\beta_j$).

\vspace{0.2cm}

\begin{definition}
 The $k$-tuple $\bar\beta$ is said to be a \emph{treezation} of $\bar\alpha$ relatively to $F$.
\end{definition}

\noindent We denote by $\mathcal{G}_\beta$ the \textit{Schreier graph of} $\beta$: the vertices are $V(\mathcal{G}_\beta)=V(\Rc)$ and for all $x\in V(\mathcal{G}_\beta)$ we have an oriented edge from $x$ to $\beta_j(x)$ which is decorated by $j^+$ and an oriented edge from $\beta_j^{-1}(x)$ to $x$ decorated by $j^-$.

\vspace{0.2cm}

\noindent More generally, given a groupoid $H\subset P(\Rc)$ generated by $\gamma_1,\dots,\gamma_k\in P(\Rc)$, we define $\mathcal{G}_H$ the \textit{Schreier graph of} $H$ with the generating tuple $(\gamma_1,...,\gamma_k)$ in the following way: $V(\mathcal{G}_H)=V(\Rc)$ and, for all $x\in V(\mathcal{G}_H)$, there is an oriented edge decorated by $j^+$ from $x$ to $\gamma_j(x)$ whenever $x\in d(\gamma_j)$ and there is an oriented edge decorated by $j^-$ from $\gamma_j^{-1}(x)$ to $x$ whenever $x\in r(\gamma_j)$. For all $l\geq 0$, we will denote by $\mathcal{G}_l$ the Schreier graph of $\mathcal{B}_l$ with the generating tuple $\bar\beta_l$.

\vspace{0.2cm}

\noindent Given a graph $\mathcal{G}$, for $l\geq 2$, a \textit{minimal path} in $\mathcal{G}$ from $x_1\in V(\mathcal{G})$ to $x_l\in V(\mathcal{G})$ is a finite sequence of pairwise distinct vertices $x_1,\dots x_{l}$ such that $x_i\sim x_{i+1}$ for all $1\leq i\leq l-1$. When $l\geq 3$ and $x_1\sim x_l$, we call it a \textit{minimal cycle}.

\vspace{0.2cm}

\noindent Recall that $a_1,\dots a_k$ are the canonical generators of $\F_k$.
\begin{proposition}\label{treezationProperties}
Any treezation $\bar\beta$ of $\bar\alpha$ satisfies the following properties.
\begin{enumerate}
\item For all $l\geq 0$ the minimal cycles in $\mathcal{G}_l$ are all contained in $\widetilde{F}$.
\item The minimal cycles of $\mathcal{G}_\beta$ are all in $\widetilde{F}$.
\item The minimal paths of $\mathcal{G}_\beta$ with extremities in $\widetilde{F}$ are contained in $\widetilde{F}$.
\item If the orbits of $\alpha$ are infinite then the orbits of $\beta$ are infinite.
\item For all $x,y\in\beta(\F_k)(\widetilde{F})$, if $(x,y)\in E(\Rc)$ then there exists $x_0,y_0\in\widetilde{F}$ such that $(x_0,y_0)\in E(\Rc)$ and $w\in\F_k$ such that $\beta(w)x_0=x$ and $\beta(w)(y_0)=y$.
\item Let $x\in\widetilde{F}$ and $w=a_{i_n}^{\epsilon_n}\dots a_{i_1}^{\epsilon_1}\in\F_k$ with its reduced expression, where $\epsilon_l\in\{-1,1\}$. If there exists $s$ such that $\beta_{i_s}^{\epsilon_s}\dots \beta_{i_1}^{\epsilon_1}x\notin\widetilde{F}$, then for all $s<t\leq n$ the path
$$\beta_{i_s}^{\epsilon_s}\dots \beta_{i_1}^{\epsilon_1}x,\, \beta_{i_{s+1}}^{\epsilon_{s+1}}\dots \beta_{i_1}^{\epsilon_1}x,\,\dots,\,\beta_{i_n}^{\epsilon_n}\dots \beta_{i_1}^{\epsilon_1}x$$
is a geodesic path in $\mathcal{G}_\beta$ and $d(\beta_{i_t}^{\epsilon_t}\dots \beta_{i_1}^{\epsilon_1}x,\widetilde{F})=1+d(\beta_{i_{t-1}}^{\epsilon_{t-1}}\dots \beta_{i_1}^{\epsilon_1}x,\widetilde{F})$ for all $s<t\leq n$, where $d$ is the graph distance.
\end{enumerate}
\end{proposition}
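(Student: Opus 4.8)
The plan is to first establish a structural description of the Schreier graphs $\mathcal{G}_l$ and $\mathcal{G}_\beta$, and then to read off all seven assertions from it. The key observation is that an elementary extension at step $l$ modifies $\mathcal{G}_l$ only by adding edges whose new endpoints $u_j,y_i$ lie \emph{outside} the set $K\supseteq\widetilde{F}$ of currently ``active'' vertices and have Schreier-degree $1$ in $\mathcal{G}_{l+1}$; in particular every newly created Schreier-edge has one endpoint in $K$ and one fresh endpoint. I would record the consequences: (a) no new edge ever joins two vertices of $\widetilde{F}$, so the edges of $\mathcal{G}_\beta$ inside $\widetilde{F}$ are exactly those of $\mathcal{G}_0$; (b) each $y\in V(\Rc)\setminus\widetilde{F}$ either acquires its first edges as the ``seed'' $z_l$ of some step, with two edges to fresh vertices — in which case a short induction over later steps shows the whole $\beta$-orbit of $y$ is disjoint from $\widetilde{F}$ and is an infinite tree — or else has a well-defined \emph{parent} $p(y)$ (its birth-neighbour), its other neighbours being ``children'' born strictly later and lying outside $\widetilde{F}$; (c) in the second case, iterating $p$ strictly decreases the birth-step until $\widetilde{F}$ is reached, after $\operatorname{depth}(y)<\infty$ steps, and this happens exactly when the $\beta$-orbit of $y$ meets $\widetilde{F}$; (d) the edge between $y$ and $p(y)$ is the unique edge joining them and corresponds to mutually inverse generators on the two sides (no ``bigon'': $y$ is fresh only at its birth step, so it gets exactly that one edge then), and since along any path the distance to $\widetilde{F}$ changes by $\pm1$ at each step, one obtains $d(y,\widetilde{F})=\operatorname{depth}(y)$ for every $y$ whose orbit meets $\widetilde{F}$.

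Assertions (1) and (2) follow immediately: by induction on $l$, a minimal cycle of $\mathcal{G}_{l+1}$ cannot use a new edge (this would force it through a degree-one vertex), hence lies in $\mathcal{G}_l$, hence in $\widetilde{F}$ by (a) and the induction hypothesis; and any minimal cycle of $\mathcal{G}_\beta$ uses finitely many edges, all present in some $\mathcal{G}_l$. The heart of the proof is (6). Writing $x_0=x\in\widetilde{F}$ and $x_t=\beta(a_{i_t}^{\epsilon_t}\cdots a_{i_1}^{\epsilon_1})x$, and letting $s'\le s$ be the least index with $x_{s'}\notin\widetilde{F}$ (so $x_{s'-1}\in\widetilde{F}$), I would prove by induction on $t$ from $s'$ to $n$ that $x_t\notin\widetilde{F}$, that $p(x_t)=x_{t-1}$, and that $\operatorname{depth}(x_t)=t-s'+1$. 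For $t=s'$: the vertex $x_{s'}$ has the $\widetilde{F}$-neighbour $x_{s'-1}$, and since the only possible $\widetilde{F}$-neighbour of a vertex outside $\widetilde{F}$ is its parent (children lie outside $\widetilde{F}$), we get $p(x_{s'})=x_{s'-1}$ and $\operatorname{depth}(x_{s'})=1$. For the inductive step, $x_{t+1}$ is a neighbour of $x_t$, hence equals $p(x_t)=x_{t-1}$ or is a child of $x_t$; the first case is impossible because, by (d), the step $x_t\to x_{t-1}$ uses the inverse of the generator used for $x_{t-1}\to x_t$, so $w$ would not be reduced; thus $x_{t+1}$ is a child, which yields the three claims for $t+1$. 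Consequently $x_{s'},\dots,x_n$ is a descending path in a tree — in particular a geodesic in $\mathcal{G}_\beta$ — and $d(x_t,\widetilde{F})=\operatorname{depth}(x_t)=t-s'+1$, giving $d(x_t,\widetilde{F})=1+d(x_{t-1},\widetilde{F})$ for all $s<t\le n$.

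Assertion (3) follows from (the proof of) (6): a minimal path of $\mathcal{G}_\beta$ corresponds to a reduced word in the generators (pairwise-distinct vertices forbid any backtracking), so if such a path had its extremities in $\widetilde{F}$ but met $V(\Rc)\setminus\widetilde{F}$, the above claim, applied from the first vertex outside $\widetilde{F}$, would force the last vertex to lie outside $\widetilde{F}$ too, a contradiction. For (4): the $\beta$-orbits are the connected components of $\mathcal{G}_\beta$; those disjoint from $\widetilde{F}$ are the infinite trees of (b), and for a component meeting $\widetilde{F}$ one picks $x\in\widetilde{F}$ in it and uses Remark~\ref{RmkOrbits} (this is where the hypothesis on $\alpha$ enters) to find $g\in\mathcal{B}_0$ with $gx$ not ``full'' for some generator; then $gx$ acquires a child outside $\widetilde{F}$, which lies in the $\beta$-orbit of $x$ and has an infinite ray of descendants (equivalently, write that child as $\beta(u)x$ with $u$ reduced and keep appending non-cancelling generators, invoking (6) to see the distance to $\widetilde{F}$ grows without bound). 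Finally (5) is proved by induction on the number of elementary extensions: with $\Omega_l$ the orbit of $\widetilde{F}$ under the groupoid $\mathcal{B}_l$ one has $\Omega_0=\widetilde{F}$ and $\bigcup_l\Omega_l=\beta(\F_k)(\widetilde{F})$; if $(x,y)\in E(\Rc)$ with $x,y\in\Omega_L$ and, say, $x\notin\Omega_{L-1}$, then by the explicit description of the new image/preimage vertices in the elementary extension of $\beta_{L-1,j(L-1)}$ — which is Proposition~\ref{PropElExt}(4) when $y\notin\Omega_{L-1}$ as well, and is proved identically when $y\in\Omega_{L-1}$ — the edge $(x,y)$ is the image under $\beta_{j(L-1)}^{\pm1}$ of an edge of $\Rc$ inside $\Omega_{L-1}$, and one concludes by the induction hypothesis after prepending $a_{j(L-1)}^{\pm1}$ to the word.

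The main obstacle is the structural lemma of the first paragraph, and within it the bookkeeping required for (6): one must carefully track which vertices are ``fresh'', ``seeds'', ``parents'' or ``children'' as the graphs $\mathcal{G}_l$ grow, rule out bigons between a vertex and its parent, and verify that the parent chain realises the graph distance to $\widetilde{F}$. Once these facts are in place, parts (1)--(5) are fairly direct consequences.
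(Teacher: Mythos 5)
Your proposal is correct and follows essentially the same route as the paper: induction over the elementary-extension steps using the fact that every new Schreier edge has a fresh, degree-one endpoint outside $K\supseteq\widetilde F$ (giving (1)--(3) and the forest structure off $\widetilde F$), Remark \ref{RmkOrbits} for (4), Proposition \ref{PropElExt}(4) for (5), and the non-backtracking/geodesic property of reduced words for (6). You merely make explicit, via the parent/depth bookkeeping, what the paper compresses into the remark that off $\widetilde F$ the Schreier graph is a forest in which paths without backtracking are geodesic.
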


\begin{proof}
\noindent(1) The result is obvious for $\mathcal{G}_0$ since all the edges in $\mathcal{G}_0$ have their source and range in $\widetilde{F}$. Now, observe that the edges of $\mathcal{G}_l$ are included in the edges of $\mathcal{G}_{l+1}$ with the same label and any edge of $\mathcal{G}_{l+1}$ which is not already an edge on $\mathcal{G}_l$ has an extremity which is not in any domain or range of any $\beta_{l,j}$ for $1\leq j\leq k$ since $\beta_{l+1,j(l)}$ is an elementary extension of $\beta_{l,j(l)}$ and for $j\neq j(l)$ one has $\beta_{l+1,j}=\beta_{j,l}$. Hence, the minimal cycles in $\mathcal{G}_{l+1}$ are contained in $\mathcal{G}_l$. This proves the result by induction on $l$.
\vspace{0.2cm}

\noindent(2) It follows from (1) since any minimal cycle in $\mathcal{G}_\beta$ is a minimal cycle in $\mathcal{G}_l$ for some $l\geq 0$.
\vspace{0.2cm}

\noindent(3) The proof is the same as the one of  (1) and (2).
\vspace{0.2cm}

\noindent(4) It follows from (2) that the induced graph structure on the complement of $\widetilde{F}$ coming from $\mathcal{G}_\beta$ is a forest. Hence, every $x\notin\widetilde{F}$ has an infinite $\beta$-orbit. If $x\in\widetilde{F}$, we find, by Remark \ref{RmkOrbits}, $g\in\mathcal{B}_0$ and $1\leq j\leq n$ such that $x\in d(g)$ and $gx\notin d(\beta_{0,j})\cap r(\beta_{0,j})$. Hence either $\beta_{k,j}(gx)\notin\widetilde{F}$ or $\beta_{k,j}^{-1}(gx)\notin\widetilde{F}$. In both cases, we find an element $w\in\F_k$ such that $\beta(w)(x)\notin\widetilde{F}$. By the first part of the proof, the $\beta$-orbit of $x$ is infinite.
\vspace{0.2cm}

\noindent(5) Let $\Omega$ be the orbit of $\widetilde{F}$ under the action $\beta$ and, for $l\geq 0$, $\Omega_l$ be the orbit of $\widetilde{F}$ under the groupoid $\mathcal{B}_l$. Since every edge of $\Rc$ which is in $\Omega$ is actually in $\Omega_l$ for some $l$, it suffices to show that, for all $l\geq 0$ and all $x,y\in\Omega_l$ with $(x,y)\in E(\Rc)$, there exists $g\in\mathcal{B}_l$ and $x_0,y_0\in\widetilde{F}$ such that $gx_0=x$ and $gy_0=y$. For $l=0$ it is trivial since $\Omega_0=\widetilde{F}$ and the proof follows by induction by using assertion (4) of Proposition \ref{PropElExt}.
\vspace{0.2cm}

\noindent(6) The proof is a direct consequence of the following remark, which is itself a direct consequence of (2) and (3). For all $x\notin\widetilde{F}$ in the connected component of a point in $\widetilde{F}$ there exist a unique path in $\mathcal{G}_\beta$ without backtracking $x_1,x_2,\dots,x_l$ such that
\begin{itemize}
\item $x_1=x$,
\item $x_2,\dots,x_{l-1}\notin\widetilde{F}$,
\item $x_l\in\widetilde{F}$.
\end{itemize}
Moreover, this path is geodesic.
\end{proof}
\subsection{Proof of Theorem D}

\noindent Let us fix an integer $k\geq 2$. We still denote by $\F_k$ the free group on $k$ generators $a_1,\ldots,a_k$, and by $\alpha$ the morphism $\F_k\rightarrow\autR$ associated to some $\bar\alpha\in\autR^k$. Let us mention that one has $\langle \bar\alpha\rangle = \{\alpha(w): \, w\in\F_ k\}$. The set 
\[
 \Ac=\{\bar\alpha\in\aut(\Rc)^k: \text{ every } \langle \alpha\rangle \text{-orbit on the vertices is infinite}\} \,  
\]
is closed in $\aut(\Rc)^k$, which is closed on S$(\Rc)^k$. Thus $\Ac$ is a Baire space. We also consider the subsets
\begin{eqnarray*}
 \Fc & = & \{\bar\alpha\in \aut(\Rc)^k: \, \alpha \text{ is injective } \} \text; \\
 \Hc & = & \{\bar\alpha\in \aut(\Rc)^k: \, \alpha \text{ is an homogeneous action on } \aut(\Rc) \}  \text.
\end{eqnarray*}
Now, Theorem D can be stated as follows.

\begin{theorem}\label{free-acts-homogeneously}
The subset $\Ac\cap \Fc \cap \Hc$ is a dense $G_{\delta}$ in $\Ac$. 
\end{theorem}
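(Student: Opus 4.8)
The plan is to exhibit $\Fc\cap\Ac$ and $\Hc\cap\Ac$ as countable intersections of dense open subsets of the Baire space $\Ac$, and then invoke the Baire category theorem. For $w\in\F_k\setminus\{1\}$ put $\Fc_w=\{\bar\alpha\in\Ac:\alpha(w)\neq\id\}$, and for $\varphi\in P(\Rc)$ put $\Hc_\varphi=\{\bar\alpha\in\Ac:\ \alpha(w)\vert_{d(\varphi)}=\varphi\ \text{for some}\ w\in\F_k\}$. Since $\F_k$ and $P(\Rc)$ are countable, $\alpha$ is injective iff $\bar\alpha\in\bigcap_{w\neq1}\Fc_w$, and $\alpha$ is a homogeneous action iff $\bar\alpha\in\bigcap_{\varphi}\Hc_\varphi$; thus $\Ac\cap\Fc\cap\Hc=\bigcap_{w\neq1}\Fc_w\cap\bigcap_{\varphi}\Hc_\varphi$. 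Each $\Fc_w$ and each $\Hc_\varphi$ is open in $\Ac$: a witnessing word $w$ involves only finitely many vertices, and agreement of the relevant automorphisms on that finite set is an open condition. So everything reduces to showing that $\Fc_w$ and $\Hc_\varphi$ are dense in $\Ac$.

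Fix $\bar\alpha\in\Ac$; a basic open neighbourhood of $\bar\alpha$ is given by a finite $F\subset V(\Rc)$ and consists of the tuples $\bar\beta$ with $\beta_j^{\varepsilon}\vert_F=\alpha_j^{\varepsilon}\vert_F$ for $1\le j\le k$ and $\varepsilon=\pm1$. For density I would first replace $\bar\alpha$ by a treezation $\bar\beta$ of $\bar\alpha$ relatively to $F$ (and, in the $\Hc_\varphi$ case, relatively to $F\cup d(\varphi)\cup r(\varphi)$), as constructed in Section \ref{treezation}. Since $\beta_{0,j}=\alpha_j\vert_{\alpha_j^{-1}(F)\cup F}$, the tuple $\bar\beta$ lies in the prescribed neighbourhood of $\bar\alpha$, and by Proposition \ref{treezationProperties}(4) it again lies in $\Ac$. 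The gain is structural: by Proposition \ref{treezationProperties}(2)--(3) every minimal cycle of the Schreier graph $\mathcal{G}_\beta$, and every minimal path of $\mathcal{G}_\beta$ joining two vertices of $\widetilde F$, lies inside the finite set $\widetilde F$; hence, away from $\widetilde F$, the graph $\mathcal{G}_\beta$ is a forest attached to $\widetilde F$ in a tree-like fashion.

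For the density of $\Fc_w$: write $w=a_{i_n}^{\epsilon_n}\cdots a_{i_1}^{\epsilon_1}$ in reduced form, pick any vertex $x\notin\widetilde F$ (possible since $\widetilde F$ is finite), and consider its $w$-trajectory $x,\ \beta_{i_1}^{\epsilon_1}x,\ \dots,\ \beta(w)x$ in $\mathcal{G}_\beta$. Because $w$ is reduced this is a non-backtracking walk, and by the forest structure together with Proposition \ref{treezationProperties}(6) it cannot be closed: in the tree region the distance to $\widetilde F$ behaves monotonically along such a walk, so a closed non-backtracking walk would have to be contained in $\widetilde F$, which $x$ is not. Hence $\beta(w)x\neq x$, so $\beta(w)\neq\id$ and $\bar\beta\in\Fc_w$. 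The one delicate point here is that ``reduced word'' really yields ``non-backtracking walk'', i.e. that $\mathcal{G}_\beta$ has no bigons away from $\widetilde F$; this is built into the elementary-extension construction, where new vertices enter as leaves, and is also what Proposition \ref{treezationProperties}(6) encodes.

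For the density of $\Hc_\varphi$: starting from the treezation $\bar\beta$ relative to $F\cup d(\varphi)\cup r(\varphi)$, I would construct a further modification $\bar\gamma$ of $\bar\beta$ — still agreeing with $\bar\alpha$ on $F$, still in $\Ac$ — together with a reduced word $w\in\F_k$ such that $\gamma(w)\vert_{d(\varphi)}=\varphi$. The idea is to use the tree region of $\mathcal{G}_\beta$ and the extension property $(R)$ of $\Rc$, in the guise of Proposition \ref{PropExtension}: for each $x\in d(\varphi)$ one threads a path $x=x^{(0)},x^{(1)},\dots,x^{(m)}=\varphi(x)$ realising the successive syllables of $w$, choosing the intermediate vertices fresh and so that the resulting finite partial maps extend to automorphisms of $\Rc$; an elementary-extension and back-and-forth completion (Proposition \ref{PropElExt}) then produces total automorphisms $\gamma_1,\dots,\gamma_k$, while one takes care at each step not to create a finite orbit. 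The main obstacle is precisely this last construction: routing $w$ through the ``free'' region while simultaneously keeping each $\gamma_j$ a genuine automorphism of $\Rc$, preserving all prescribed values on $F$, realising $\varphi$ exactly on $d(\varphi)$, and keeping every orbit infinite — and the treezation is exactly the device that isolates a region with enough room to carry this out. Granting density of all the $\Fc_w$ and $\Hc_\varphi$, the Baire category theorem in $\Ac$ gives that $\Ac\cap\Fc\cap\Hc=\bigcap_{w\neq1}\Fc_w\cap\bigcap_{\varphi}\Hc_\varphi$ is a dense $G_\delta$ in $\Ac$.
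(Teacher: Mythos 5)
Your overall architecture coincides with the paper's: the decomposition $\Ac\cap\Fc\cap\Hc=\bigcap_w(\Ac\cap\Fc_w)\cap\bigcap_\varphi(\Ac\cap\Hc_\varphi)$, the openness of these sets, the Baire category theorem, and the passage to a treezation $\bar\beta$ of $\bar\alpha$ relative to $F$ so that the Schreier graph is a forest outside $\widetilde F$. Your treatment of $\Fc_w$ is essentially adequate (the paper itself leaves this ``much easier'' half to the reader); the only quibble is that you should pick $x$ whose entire $w$-trajectory avoids $\widetilde F$ --- only finitely many vertices fail this --- rather than an arbitrary $x\notin\widetilde F$, since a non-backtracking closed walk based outside $\widetilde F$ could a priori travel to $\widetilde F$, traverse a cycle there, and return.

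The genuine gap is the density of $\Ac\cap\Hc_\varphi$, which is the heart of the theorem and which you explicitly defer (``the main obstacle is precisely this last construction''). Your plan --- thread, for each $x\in d(\varphi)$, a path of fresh vertices from $x$ to $\varphi(x)$ realising the syllables of some unspecified word --- does not confront the two points on which the proof actually turns. First, whatever partial map is grafted onto a generator must be an isomorphism of \emph{induced subgraphs}, not merely a bijection, before Proposition \ref{PropExtension} can be applied; fresh intermediate vertices do not suffice, because the final step must carry the induced graph on the penultimate set onto the induced graph on $r(\varphi)$, and $r(\varphi)$ sits inside $\Rc$ with whatever edges it has to the rest of the configuration. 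The paper resolves this by invoking Neumann's Lemma to produce a cyclically reduced $u$ with $\beta(u^2)\widetilde F\cap\widetilde F=\emptyset$, and then proving --- using Proposition \ref{treezationProperties} (5) and (6) in an essential way --- that the translates $\beta(a_i^{\pm s}u^2)\widetilde F$ are not just disjoint from but \emph{disconnected} from the trajectory set $\widetilde K$; only then is the map $\tau$, equal to $\beta_{i'}$ on $\beta_{i'}^{-1}K\cup K$ and to the conjugate $\beta(a_i^{-s}u^2)\circ\varphi\circ\beta(u^{-2}a_i^{-s})$ of $\varphi$ on $\beta(a_i^{s}u^2)d(\varphi)$, a partial graph isomorphism (the paper notes explicitly that without the disconnection $\tau$ would only be a bijection of vertices). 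Second, membership in $\Ac$ must be preserved: the paper modifies a single generator $\omega_{i'}$ and forces it to agree with $\beta_{i'}$ on $\beta_{i'}^{-1}K\cup K$, so that all orbits stay infinite, and then the conjugate $u^{-2}a_i^{\mp s}a_{i'}a_i^{\pm s}u^2$ of the modified generator realises $\varphi$. Without producing such an explicit word and verifying the disconnection that legitimises the grafted map, the density of $\Hc_\varphi$ --- and hence the theorem --- is not established.
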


\begin{proof}
Let us prove that $\Ac\cap \Hc$ is a dense $G_{\delta}$ in $\Ac$. To do this, we observe that $\Ac\cap\Hc=\bigcap_{\varphi\in P(\Rc)} (\Ac\cap \Hc_\varphi)$, where 
$
 \Hc_\varphi = \{\bar\alpha\in\aut(\Rc)^k: \, \exists w\in\F_k, \, \forall x\in d(\varphi), \ \alpha(w) x= \varphi x\}.
$
Since the $\Ac \cap \Hc_\varphi$'s are all open because the $\Hc_\varphi$'s are open in $\aut(\Rc)^k$, it suffices to prove that $\Ac \cap \Hc_\varphi$ is dense for all $\varphi\in P(\Rc)$.
 
\vspace{0.2cm}
 
\noindent Take an arbitrary $\varphi\in P(\Rc)$ and an arbitrary $\bar\alpha=(\alpha_1,\dots, \alpha_k)\in \Ac$. We need to prove that, for any finite subset $F\subset V(\Rc)$, there exists $\bar\omega\in\Ac\cap\Hc_\varphi$ such that  $\alpha_j^{\pm 1}$ and $\omega_j^{\pm 1}$ coincide on $F$ for all $j$. To prove this, we may assume that $F$ contains both $d(\varphi)$ and $r(\varphi)$.
 
\vspace{0.2cm}
 
\noindent We take such an $F$ and we set $\tilde F = \bigcup_{j=1}^k (\alpha_j^{-1}(F)\cup F \cup \alpha_j(F))$ as we did in Section \ref{treezation}. Then we take a treezation $\bar\beta$ of $\bar\alpha$ relatively to $F$. Let us recall that $\beta$ has all orbits infinite since $\alpha$ does, and that, for all $j$, the automorphisms $\beta_j^{\pm 1}$ and $\alpha_j^{\pm 1}$ coincide on $F$. 

\vspace{0.2cm}

\noindent Recall that (known as Neumann's Lemma \cite{Neu76}), if every orbit of an group action  $G\curvearrowright X$ in infinite, then for every finite subset $F$ of $X$, there exists $g\in G$ such that $gF\cap F=\emptyset$. Hence, there exists $u\in\F_ k$ (we consider it as a reduced word) such that $\beta(u)\tilde F \cap \tilde F = \emptyset$. Up to replacing $u$ by some $a_j u$, we may assume that $u$ is moreover cyclically reduced. This is possible thanks to Proposition \ref{treezationProperties} (6). Since $u^2$ is reduced, we also have $\beta(u^2)\tilde F \cap \tilde F = \emptyset$, again by Proposition \ref{treezationProperties} (6). 
 
\vspace{0.2cm}
 
\noindent Let us now show that $\tilde F$ and $\beta(u^2)\tilde F$ are disconnected, that is, for all $x\in \tilde F$ and $y\in \beta(u^2)\tilde F$, we have $x\nsim y$. Assume by contradiction that there exists $x\in \tilde F$ and $y\in \beta(u^2)\tilde F$ such that $x\sim y$. By Proposition \ref{treezationProperties} (5), there exist $x_0,y_0 \in \tilde F$ and $v\in \F_k$ such that $x=\beta(v)x_0$ and $y=\beta(v)y_0$. Using Proposition \ref{treezationProperties} (6), we see first that $v=uv'$ (reduced expression) since $y=\beta(v)y_0$ is in $\beta(v)\tilde F \cap \beta(u^2)\tilde F$, and then that $\beta(v)x_0 = \beta(u)\beta(v')x_0$ is not in $\tilde F$. Indeed, if $\beta(v')x_0\in\tilde F$, then $\beta(u)\beta(v')x_0$ is not in $\tilde F$, and if $\beta(v')x_0\notin\tilde F$, then $\beta(u)\beta(v')x_0$ is farer from $\tilde F$ than $\beta(v')x_0$ in the Schreier graph of $\beta$. Since we have $\beta(v)x_0\notin \tilde F$, $x\in \tilde F$ and $x=\beta(v)x_0$, we have obtained a contradiction.

\vspace{0.2cm}

\noindent We now set $K = \bigcup_v \beta(v) \tilde F$, where the union is taken over the prefixes of $u^2$ (it is the union of trajectories (see Definition 3.1 in \cite{MS13}) of points in $\tilde F$ in the Schreier graph of $\beta$) and $\tilde K = \bigcup_{j=1}^k (\beta_j^{-1}(K)\cup K \cup \beta_j(K))$. We also consider some integer $s>10\operatorname{diam}(\tilde K)$, with respect to the metric on the Schreier graph of $\beta$, and an index $i$ such that the first letter of $u$ is different from $a_i^{\pm 1}$.  Proposition \ref{treezationProperties} ensures that $\beta(a_i^{-s}u^2)\tilde F$, $\tilde K$ and $\beta(a_i^{s}u^2)\tilde F$ are pairwise disjoint. 

\vspace{0.2cm}

\noindent Moreover, $\beta(a_i^{-s}u^2)\tilde F$ and $\beta(a_i^{s}u^2)\tilde F$ are both disconnected from $\tilde K$. Indeed, assume by contradiction that there exist $x\in \tilde K$ and $y\in \beta(a_i^su^2)\tilde F$ such that $x\sim y$. By Proposition \ref{treezationProperties} (5), there exist $x_0,y_0 \in \tilde F$ and $v\in \F_k$ such that $x=\beta(v)x_0$ and $y=\beta(v)y_0$. On the one hand, using Proposition \ref{treezationProperties} (6), we see first that $v=a_i^suv'$ (reduced expression) since $y=\beta(v)y_0$ is in $\beta(v)\tilde F \cap \beta(a_i^su^2)\tilde F$, and then that the distance (in the Schreier graph of $\beta$) between $\beta(v)x_0$ and $\tilde F$ is greater than $10\operatorname{diam}(\tilde K)$. Indeed, we get, as above, that $\beta(uv')x_0\notin \tilde F$, and deduce that the distance between $\beta(v)x_0 = \beta(a_i^s)\beta(uv')x_0$ and $\tilde F$ is greater than $s$, which is itself greater than $10\operatorname{diam}(\tilde K)$. On the other hand, since $x$ is in $\tilde K$, its distance is less or equal to $\operatorname{diam}(\tilde K)$. Hence, we get $x\neq \beta(v)x_0$, a contradiction. This proves that there are no edges between $\tilde K$ and $\beta(a_i^{s}u^2)\tilde F$. One gets similarly that there are no edges between $\tilde K$ and $\beta(a_i^{-s}u^2)\tilde F$.

\vspace{0.2cm}

\noindent Thus we can define a partial graph isomorphism $\tau\in P(\Rc)$ as follows: we fix some index $i'$ distinct from $i$ (which exists since $k\geq 2$) and set:
\begin{itemize}
 \item $d(\tau) = (\beta_{i'}^{-1} K\cup K) \sqcup \beta(a_i^{s}u^2)d(\varphi)$; \quad $\quad r(\tau) = (K\cup\beta_{i'}K) \sqcup \beta(a_i^{-s}u^2)r(\varphi)$;
 \item for $x\in \beta_{i'}^{-1} K\cup K$, we set $\tau x=\beta_{i'}x$;
 \item for $x\in \beta(a_i^{s}u^2)d(\varphi)$, we set $\tau x= \beta(a_i^{-s}u^2) \circ \varphi \circ \beta(u^{-2} a_i^{-s})x$.
\end{itemize}
Note that without the fact that $\beta(a_i^{-s}u^2)\tilde F$ and $\beta(a_i^{s}u^2)\tilde F$ are both disconnected from $\tilde K$, the map $\tau$ could be only a bijection between vertices.

\vspace{0.2cm}

\noindent Now, one can extend $\tau$ to an automorphism $\omega_{i'}$, and set $\omega_j=\beta_j$ for other indices $j$. Then, for all $j$, $\omega_j^{\pm 1}$, $\beta_j^{\pm 1}$ and $\alpha_j^{\pm 1}$ coincide on $F$. Moreover $\beta(a_i^{-s}u^2)$ and $ \omega(a_i^{-s}u^2)$ coincide on $F$, and the same holds for $\beta(a_i^{s}u^2)$ and $\omega(a_i^{s}u^2)$. Hence, one has
\begin{eqnarray*}
 \omega(u^{-2}a_i^{-s}a_{i'}a_i^{-s}u^2)x &=& \beta(u^{-2}a_i^{-s}) \tau \beta(a_i^{-s}u^2)x \\
 &=& \beta(u^{-2}a_i^{-s})\beta(a_i^{s}u^2) \varphi \beta(u^{-2} a_i^s)\beta(a_i^{-s}u^2)x= \varphi x
\end{eqnarray*}
for all $x\in d(\varphi)$. In other words, $\bar \omega=(\omega_1,\ldots,\omega_k)$ is in $\Hc_\varphi$.
 
\vspace{0.2cm}
 
\noindent Finally, as $\omega_{i'}$ coincides with $\beta_{i'}$ on $\beta_{i'}^{-1}K \cup K$ and all $\beta_i$-orbits are infinite outside this set, $\omega$ also has all orbits infinite, that is $\bar\omega$ is in $\Ac$. Consequently, we obtain that $\omega$ is in $\Ac\cap\Hc_\varphi$, which concludes the proof that $\Ac\cap\Hc$ is dense $G_\delta$ in $\Ac$. One can use a similar, but much easier, argument to show that $\Ac\cap \Fc$ is a dense $G_{\delta}$ in $\Ac$. This is left to the reader.
\end{proof}

\end{document}